\DeclareMathAlphabet\gothic{U}{euf}{m}{n}
\def\eqnarray{\stepcounter{equation}\let\@currentlabel=\theequation
\global\@eqnswtrue
\tabskip\@centering\let\\=\@eqncr
$$\halign to \displaywidth\bgroup\hfil\global\@eqcnt\z@
  $\displaystyle\tabskip\z@{##}$&\global\@eqcnt\@ne
  \hfil$\displaystyle{{}##{}}$\hfil
  &\global\@eqcnt\tw@ $\displaystyle{##}$\hfil
  \tabskip\@centering&\llap{##}\tabskip\z@\cr}
\def\endeqnarray{\@@eqncr\egroup
      \global\advance\c@equation\m@ne$$\global\@ignoretrue}
\def\@yeqncr{\@ifnextchar [{\@xeqncr}{\@xeqncr[5pt]}}
\newtheorem{theorem}{Theorem}[section]
\newtheorem{thm}[theorem]{Theorem}
\newtheorem{lemma}[theorem]{Lemma}
\newtheorem{lem}[theorem]{Lemma}
\newtheorem{corollary}[theorem]{Corollary}
\newtheorem{cor}[theorem]{Corollary}
\newtheorem{prop}[theorem]{Proposition}
\theoremstyle{definition}
\newtheorem{defn}[theorem]{Definition}
\newtheorem{assu}[theorem]{Assumption}
\newtheorem{exam}[theorem]{Example}
\newtheorem{rem}[theorem]{Remark}
\theoremstyle{remark}
\newcommand{\gots}{\gothic{s}}
\newcounter{teller}
\newenvironment{tabel}{\begin{list}%
{\rm  (\alph{teller})\hfill}{\usecounter{teller} \leftmargin=1.1cm
\labelwidth=1.1cm \labelsep=0cm \parsep=0cm \setlength{\listparindent}{\parindent}}
                      }{\end{list}}
\newcounter{tellerr}
\newenvironment{tabeleq}{\begin{list}%
{\rm  (\roman{tellerr})\hfill}{\usecounter{tellerr} \leftmargin=1.1cm
\labelwidth=1.1cm \labelsep=0cm \parsep=0cm \setlength{\listparindent}{\parindent}}
                         }{\end{list}}
\newcounter{tellerrr}
\newcounter{proofstep}
\newcommand{\Ni}{\mathds{N}}
\newcommand{\Ri}{\mathds{R}}
\newcommand{\Ci}{\mathds{C}}
\newcommand{\field}[1]{\mathbb{#1}}
\newcommand{\R}{\field{R}}
\newcommand{\C}{\field{C}}
\newcommand{\RRe}{\mathop{\rm Re}}
\newcommand{\supp}{\mathop{\rm supp}}
\newcommand{\mr}{{\rm MR}}
\newcommand{\dom}{\operatorname{Dom}}
\newcommand{\ca}{{\cal A}}
\newcommand{\cb}{{\cal B}}
\newcommand{\ce}{{\cal E}}
\newcommand{\ci}{{\cal I}}
\newcommand{\cj}{{\cal J}}
\newcommand{\ck}{{\cal K}}
\newcommand{\cl}{{\cal L}}
\newcommand{\fD}{\mathfrak D}
\newcommand{\fI}{\mathfrak I}
\begin{document}
\bibliographystyle{tom}

{\bf On maximal parabolic regularity for non-autonomous parabolic operators}

\begin{center}
Karoline Disser, A.F.M. ter Elst, Joachim Rehberg
\end{center}

\begin{abstract} 
	We consider linear inhomogeneous non-autonomous parabolic problems associated to sesquilinear 
forms, with discontinuous dependence of time. 
We show that for these problems, the property of maximal parabolic regularity can be 
extrapolated to time integrability exponents $r \neq 2$.
This allows us to prove maximal parabolic $L^r$-regularity for discontinuous non-autonomous 
second-order divergence form operators in very general geometric settings 
and to prove existence results for related quasilinear equations.  
\end{abstract}

\emph{Key words and phrases:} non-autonomous evolution equations, 
parabolic initial boundary value problems, maximal parabolic regularity, 
extrapolation of maximal parabolic regularity
\par
\emph{2010 Mathematics Subject Classification.} 35B65 (primary), 47A07, 35K20, 35B45, 46B70
\par
\emph{Acknowledgements.} K.D. was supported by the European Research 
Council via ``ERC-2010-AdG no. 267802 (Analysis of Multiscale Systems 
Driven by Functionals)''.

\tableofcontents

\vspace*{10mm}

\newpage

\section[Introduction] {Introduction} \label{Snonaut1}

In this paper we are 
interested in maximal parabolic regularity for 
 non-autonomous parabol\-ic equations like
\[
u'(t) +\ca(t) u(t) =f(t),  
\]
for almost every $t \in (0,T)$,
where $u(0)=0$, $f \in L^r((0,T);X)$ and 
the operators $\ca(t)$ all have the same domain of definition $D$ in a Banach space~$X$.
If the operator function $\ca(\cdot)$
is constant, these equations may be solved using the concept of maximal parabolic regularity, see 
for example
\cite{DeS}, \cite{ArendtBu}, \cite{Are6}, \cite{Lamb}, 
\cite{DV}, \cite{dor}, \cite{Weis}, \cite{DHP},  \cite{ABHR}, 
\cite{DisserMeyriesRehberg}, \cite{PrSi}.
This theory extends to cases in which the dependence
\begin{equation} \label{e-discont}
(0,T)=J \ni t \mapsto \ca(t) \in \cl(D;X)
\end{equation}
is continuous, see \cite[Chapter~VI]{Gris2}, \cite{PrussSchnaubelt}, 
\cite{Ama5} and the survey in \cite{Schn} and it is a powerful tool for solving 
corresponding nonlinear equations,  see  \cite{CL}, \cite{Pru2},
 \cite{Ama6}, \cite{HiebR}, \cite{HaR}, \cite{HaR3}, 
\cite{KPW}, \cite{EMR}.

If the continuity of \eqref{e-discont} is violated, things are much less understood, 
the only classical exception being 
 the case that the summabilility exponent in 
time, $r$, equals $2$ and that $X$ and $D$ are Hilbert spaces (see Proposition \ref{p-Lions} below). 
For results in the Banach space case, we refer to \cite{AltLuck} and \cite{Groe2} and for 
relevant recent achievements see \cite{ACFP}, \cite[Proposition~1.3]{ACFP}, \cite{ADLO}, 
\cite{GallaratiVeraar}, \cite{Fackler1}, \cite{Fackler2}
and the references therein. 
These results are mostly to be seen  as perturbation results, 
with respect to an \emph{autonomous} parabolic operator. 

The spirit of our paper is perturbative in a different sense: not the operator is changed, but the Banach space.
This enables us to extrapolate
maximal parabolic regularity for whole classes of non-autonomous operators simultaneously. 
Remarkably, Gr\"oger proved in \cite{Groe2} that maximal parabolic
$L^r(J;W^{-1,q}_\fD)$-regularity for second-order divergence-form operators is preserved in case 
of non-smooth, time-dependent coefficients,
if one deviates from $q=r=2$ to temporal and spatial integrability exponents $q=r$ in an interval 
$[2,2+\varepsilon)$ that 
depends on the ellipticity
constant and the $L^\infty$-norm of the coefficient function.
Here, we provide an abstract 
extrapolation strategy that  includes  
general non-autonomous operators corresponding to sesquilinear forms, 
and we extend the results of Gr\"oger to indices $q\neq r \in (2-\varepsilon, 2+ \varepsilon)$ 
and more general geometric settings for mixed boundary conditions.

In the first part of the paper, we develop an abstract framework  which allows to show 
maximal parabolic $L^r(J;X)$-regularity for \emph{non-autonomous} operators in 
a Banach space $X$ and for some $r\in (1,\infty)$, provided that one knows 
maximal parabolic regularity for suitable \emph{autonomous} operators. 
Later on we specify $X$ to be $W_\fD^{-1,q}(\Omega)$
(see Definition \ref{d-1} below) and $\ca(t)$ to be a second-order divergence
operator $-\nabla \cdot \mu_t \nabla +I$.
We aim at situations in which the map in \eqref{e-discont} is substantially discontinuous, 
which means that it
is allowed to be discontinuous in every point $t$ of the
time interval $J$, and does \emph{not} satisfy the already weak condition of relative 
continuity in \cite{ACFP}.
A prototype for this is the following: for each time
$t \in J$ there is a moving subdomain $\Omega_t \subset \Omega$ on which the coefficient function
$\mu_t$ is constant and it takes a different constant value on $\Omega \setminus \Omega_t$, 
see Section~\ref{Snonaut8}.

The Banach spaces $X$ of type $W^{-1,q}_\fD(\Omega)$ turn out 
to be well suited for the treatment of 
elliptic and parabolic problems if these are combined with inhomogeneous Neumann 
boundary conditions (cf.\ \cite[Section~3.3]{Lio3} for $q=2$) or if right 
hand sides of distributional type appear, e.g.\ surface densities, which may even
vary their position in time, cf.\ \cite{HaR}, \cite{HaR3}.
Note that there is
often an 
intrinsic connection between (spatial) jumps in the coefficient function
and the appearance of surface densities as parts of the right hand side 
(see \cite[Chapter~1]{Tam}).
Interestingly, these spaces are also adequate for the
treatment of control problems, see \cite{KunischPieperVexler}, \cite{CasasClasonKunisch}, \cite{KrumbiegelRehberg}, \cite{HMRR}.
The advantage of $W^{-1,q}_\fD(\Omega)$ over $W^{-1,2}_\fD(\Omega)$
is that the domain of elliptic divergence operators continuously embeds into
a H\"older space if $q$ is larger than the space dimension (cf.\ \cite{HMRS} \cite{ERe2}),
 which is helpful when considering  quasilinear problems, see Section~\ref{Snonaut00} below and 
cf.\ \cite{Pru2}, \cite{HiebR}, \cite{HaR} in the continuous setting. 
Moreover, in contrast to the $L^p(\Omega)$ spaces, the space $W^{-1,q}_\fD(\Omega)$
satisfies the property that the domains of divergence operators $-\nabla \cdot (\mu_t \nabla)$
coincide at different points in time even if the discontinuities of the coefficient functions $\mu_t$ move
in $\Omega$, see the examples in Section~\ref{Snonaut6}, taken from
\cite{ElschnerRS}, \cite{DisserKaiserRehberg}.

We next give an outline of the paper.
We first recall preliminary results on maximal parabolic regularity and a 
quantitative version of Sneiberg's extrapolation principle. 
Then we prove an interpolation result for different spaces of maximal parabolic
regularity, i.e.\ we prove the interpolation identity
\begin{equation} \label{e-interpolident0}
[L^{r_0}(J;D) \cap W_0^{1,r_1}(J;X),L^{r_1}(J;E) \cap W_0^{1,r_2}(J;Y)]_\theta 
=
L^r(J;[D,E]_\theta) \cap W_0^{1,r}(J;[X,Y]_\theta),
\end{equation}
in which $\frac{1}{r}=\frac{1-\theta}{r_0} +\frac{\theta}{r_1}$ and 
$(D,X)$ and $(E,Y)$ form a \emph{pair of common
maximal parabolic regularity}, see Definition \ref{d-commonpair} below.
Having this at hand, one can extrapolate maximal parabolic regularity from one
setting to `neighbouring ones', see Theorem~\ref{t-extrapolatabstr}.

Then in Section \ref{Snonaut3} we treat linear, non-autonomous 
parabolic equations in the classical Hilbert space setting as in 
\cite[Section~XVIII.3]{DLen5}.
We show that the exponent of time
integrability extrapolates from $2$ to $r \neq 2$ without losing maximal parabolic regularity 
and provide quantitative estimates on the size of $r$ in Theorem~\ref{thm-e-Hilbert}. 
A detailed motivation for these type of results is given at the end of Section \ref{Snonaut3}.
In Section~\ref{Snonaut5} we introduce the precise setting of the elliptic 
differential operators in divergence form with mixed boundary conditions
which we use in the remainder of the paper.
In Section~\ref{Snonaut6} we use recent results on elliptic regularity \cite{HJKR}
and autonomous parabolic regularity \cite{ABHR}, \cite{EHT} to 
obtain maximal parabolic regularity for autonomous operators in the $W^{-1,q}$-scale,
even for some $q < 2$.
Next in Section~\ref{Snonaut7} we exploit \eqref{e-interpolident0} in order to 
achieve maximal parabolic regularity for non-autonomous operators in the $W^{-1,q}$-scale.
To be precise, in Theorem \ref{t-main}
we show that non-autonomous operators $\frac{\partial }{\partial t}
-\nabla \cdot (\mu_t \nabla)$ satisfy maximal parabolic 
$L^r(J;W^{-1,q}_\fD(\Omega))$-regularity if $r \in (2-r_0,2+r_0)$ and  
$q \in (2-\varepsilon,2+\varepsilon)$.
The coefficient function $t \mapsto \mu_t$ can be discontinuous in time and space.
As in the preceding articles \cite{HJKR} and \cite{ABHR},
the geometric setting for the domains and boundary parts is 
extremely wide: the domains may even fail to be Lipschitz and the `Dirichlet'
boundary part $\fD$ is only required to be Ahlfors--David regular. 
In Section~\ref{Snonaut00}, the main results from previous sections are applied to related 
quasilinear problems.
In Section~\ref{Snonaut8}, we give an example of 
a non-autonomous parabolic operator with discontinuous coefficients
in both space and time to which Theorem~\ref{t-main} applies.

\section{Preliminaries} \label{Snonaut2}

Throughout this paper let $T > 0$ and set $J = (0,T)$.
Let us start by introducing the following (standard) definition.

\begin{defn} \label{d-bochner}
If $X$ is a Banach space and $r \in (1,\infty)$, 
then we denote by $L^r(J;X)$ the space of $X$-valued functions
$f$ on $J$ which are Bochner-measurable and for which $\int_J\|f(t)\|_X^r\,dt$
is finite.
We define $W^{1,r}(J;X):=\{u \in L^r(J;X): u' \in L^r(J;X)\}$,
where $u'$ is to be understood as the time derivative of $u$ in the sense of
 $X$-valued distributions (cf.\ \cite[Section~III.1]{Ama2}).
Moreover, we introduce the subspace 
\[
W_0^{1,r}(J;X):= \{ \psi \in W^{1,r}(J;X) : \psi(0)=0\}
 .  \]
We equip this subspace always with the norm $v \mapsto \|v'\|_{L^r(J;X)}$.
\end{defn}

In this paper we consider the following 
notion of maximal parabolic regularity in the non-autonomous case. 

\begin{defn} \label{d-maxpar}
Let $X$, $D$ be Banach spaces with $D$ densely and continuously embedded in $X$.
Let $J \ni t \mapsto \ca(t) \in \cl(D;X)$ be a bounded and measurable map
and suppose that the operator $\ca(t)$ is closed in $X$ for all $t \in J$.
Let $r \in (1,\infty)$.
Then we say that the family $\{\ca(t)\}_{t \in J}$ satisfies {\bf (non-autonomous) maximal parabolic
$L^r(J;X)$-regularity}, if for any $f \in L^r(J;X)$ there is a unique function
$u \in L^r(J;D) \cap W_0^{1,r}(J;X)$ which satisfies
\begin{equation} \label{e-0paragleich}
u'(t) +\ca(t)u(t)=f(t)
\end{equation}
for almost all $t \in J$.
We write
\[
\mr^r_0(J;D,X) := L^r(J;D) \cap W_0^{1,r}(J;X)
\]
for the space of maximal parabolic regularity. 
The norm of $u \in \mr^r_0(J;D,X)$ is
\[
\|u\|_{\mr^r_0(J;D,X)}
= \|u\|_{L^r(J;D)} + \|u'\|_{L^r(J;X)}
 .  \]
Then $\mr^r_0(J;D,X)$ is a Banach space.
\end{defn}

If all operators $\ca(t)$ are equal to one (fixed) operator $\ca_0$, 
and there exists an $r \in (1,\infty)$ such that 
$\{\ca(t)\}_{t \in J}$ satisfies maximal parabolic $L^r(J;X)$-regularity, 
then $\{\ca(t)\}_{t \in J}$ satisfies maximal parabolic $L^s(J;X)$-regularity
for all $s \in (1,\infty)$ and we say that 
$\ca_0$ satisfies {\bf maximal parabolic regularity on $X$}.
In all what follows, we denote the mapping which assigns to the right hand side 
$f \in L^r(J;X)$ in \eqref{e-0paragleich} the solution $u \in \mr^r_0(J;D,X)$ 
by $\bigl (\frac{\partial}{\partial t}+ \ca(\cdot)\bigr )^{-1}$
and $\bigl (\frac{\partial}{\partial t}+ \ca_0\bigr )^{-1}$,
respectively.

The $L^p$-spaces satisfy optimal interpolation properties with 
respect to the complex interpolation method.

\begin{prop} \label{p-interLpp}
Let $X,Y$ be two Banach spaces which form an interpolation couple.
Further, let $r_0, r_1 \in [1, \infty)$, $\theta \in (0,1)$ and set 
$r=\Bigl (\frac{1-\theta}{r_0} +\frac{\theta}{r_1}\Bigr )^{-1}$.
Then
\[
[L^{r_0}(J;X),L^{r_1}(J;Y)]_\theta =L^r(J;[X,Y]_\theta) 
\]
with {\bf equality} of norms.
\end{prop}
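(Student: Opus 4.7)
The plan is to establish both inequalities $\|f\|_{[L^{r_0}(J;X),L^{r_1}(J;Y)]_\theta} \leq \|f\|_{L^r(J;[X,Y]_\theta)}$ and the reverse, since the statement requires \emph{equality} of norms. By density of $(X \cap Y)$-valued simple functions on both sides---which holds in $L^r(J;[X,Y]_\theta)$ by standard measure theory, and in the complex interpolation space because $X \cap Y$ is by construction dense in $[X,Y]_\theta$---it suffices to prove the identity on such simple functions and then extend by completion.

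For the upper bound, fix $f = \sum_{i=1}^N a_i \chi_{E_i}$ with $a_i \in X \cap Y$ and pairwise disjoint measurable $E_i \subset J$, and write $M := \|f\|_{L^r(J;[X,Y]_\theta)}$. Given $\varepsilon > 0$, I select for each $i$ an interpolating function $\phi_i \colon \overline{S} \to X+Y$ (with $S = \{z \colon 0 < \RRe z < 1\}$) satisfying $\phi_i(\theta) = a_i$ and $\sup_y \|\phi_i(iy)\|_X, \sup_y \|\phi_i(1+iy)\|_Y \leq (1+\varepsilon) \|a_i\|_{[X,Y]_\theta}$. Setting $\alpha := r/r_0 - 1$ and $\beta := r/r_1 - 1$, so that $(\alpha+1)r_0 = r = (\beta+1)r_1$ and $\alpha(1-\theta) + \beta\theta = 0$, the ansatz
\[
F(z)(t) := \mu^{z-\theta} \sum_{i=1}^{N} \|a_i\|_{[X,Y]_\theta}^{\alpha(1-z)+\beta z} \, \phi_i(z) \, \chi_{E_i}(t), \qquad \mu := M^{r/r_0 - r/r_1},
\]
satisfies $F(\theta) = f$. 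Since on each vertical boundary the scalar prefactors have modulus equal to their real part, a direct computation yields $\sup_y \|F(iy)\|_{L^{r_0}(J;X)} \leq \mu^{-\theta}(1+\varepsilon) M^{r/r_0}$ and analogously on the other boundary; the choice of $\mu$ balances both bounds to exactly $(1+\varepsilon) M$. Hence $\|f\|_{[L^{r_0}(J;X),L^{r_1}(J;Y)]_\theta} \leq (1+\varepsilon) M$, and letting $\varepsilon \to 0$ gives the upper inequality.

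For the reverse inequality I would dualize, assuming (by replacing $X, Y$ with the closures of $X \cap Y$ if necessary, which leaves $[X,Y]_\theta$ unchanged) that $X \cap Y$ is dense in both factors, so that $[X,Y]_\theta^* = [X^*,Y^*]^\theta$ and the analogous identification holds at the $L^p$-level. Given $f$ with interpolation norm $\leq M$ realized by some strip function $F$, and a test simple function $g$ with $\|g\|_{L^{r'}(J;[X,Y]_\theta^*)} \leq 1$, the already-proven upper estimate applied to the dual pair furnishes an interpolating function $G$ for $g$ of norm $\leq 1+\varepsilon$. Then $z \mapsto \int_J \langle G(z)(t), F(z)(t) \rangle \, dt$ is bounded and holomorphic on $\overline{S}$, controlled on each vertical boundary by $(M+\varepsilon)(1+\varepsilon)$ via H\"older (pairing $r_0$ with $r_0'$ and $r_1$ with $r_1'$), and equals $\int_J \langle g(t),f(t)\rangle \, dt$ at $z = \theta$. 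The three-lines theorem yields the dual bound, and taking the supremum over $g$ delivers $\|f\|_{L^r(J;[X,Y]_\theta)} \leq M$. I expect the main obstacle to be assembling the density and duality reductions cleanly; the computational heart---the exponents $\alpha,\beta$ and the balancing constant $\mu$---is rigidly forced by the requirement that all scalar factors reassemble into the $L^r$-norm at $z = \theta$.
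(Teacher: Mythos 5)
The paper does not prove this statement; it simply cites \cite[Theorem~5.1.2]{BL}, so you are supplying a proof the paper delegates to Bergh--L\"ofstr\"om. Your first half coincides, in substance, with the argument given there: for a simple function with values in $X\cap Y$ you build an explicit strip function by attaching nearly optimal interpolating functions $\phi_i$ to the atoms and inserting the scalar weight $\mu^{z-\theta}\|a_i\|_{[X,Y]_\theta}^{\alpha(1-z)+\beta z}$; the exponent arithmetic you record ($\alpha(1-\theta)+\beta\theta=0$, $(\alpha+1)r_0=(\beta+1)r_1=r$, the balancing choice of $\mu$) is exactly right and yields $\|f\|_{[L^{r_0}(J;X),L^{r_1}(J;Y)]_\theta}\le\|f\|_{L^r(J;[X,Y]_\theta)}$ with constant one. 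Two routine points you should at least mention: (i) finiteness of $|E_i|$, hence $J$ bounded or $E_i$ of finite measure, is what makes $\phi_i(z)\chi_{E_i}\in L^{r_0}(J;X)+L^{r_1}(J;Y)$; (ii) the vanishing-at-infinity requirement on the vertical lines is restored by the standard $e^{\delta(z^2-\theta^2)}$ damping factor before passing to the supremum.

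The reverse inequality is where your route genuinely departs from Bergh--L\"ofstr\"om, and it is also where there is a real gap. You want to pair $F$ against a test function $G$ built by applying your already-proven upper estimate to the dual couple $(X^*,Y^*)$ with exponents $(r_0',r_1')$. That upper estimate controls $\|G\|_{[L^{r_0'}(X^*),L^{r_1'}(Y^*)]_\theta}$ by $\|g\|_{L^{r'}(J;[X^*,Y^*]_\theta)}$, using the \emph{lower} (Calder\'on) method on $(X^*,Y^*)$. But the functionals you need to norm $\|f(t)\|_{[X,Y]_\theta}$ live, by the duality theorem you invoke, in $[X,Y]_\theta^*=[X^*,Y^*]^\theta$, the \emph{upper} method space, and the inclusion $[X^*,Y^*]_\theta\hookrightarrow[X^*,Y^*]^\theta$ is norm-decreasing and in general strict. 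So constraining $\|g\|_{L^{r'}(J;[X,Y]_\theta^*)}\le1$ does not give you the bound $\|g\|_{L^{r'}(J;[X^*,Y^*]_\theta)}\le1$ that you need to produce $G$. To make your scheme work you must establish that, for $a\in X\cap Y$, elements $\xi\in X^*\cap Y^*$ with $\|\xi\|_{[X^*,Y^*]_\theta}\le1$ already norm $\|a\|_{[X,Y]_\theta}$; this is true, but it is a substantive part of Calder\'on's duality theory (not a consequence of Hahn--Banach alone), and you have not supplied it. Bergh--L\"ofstr\"om avoid this by proving the reverse inclusion directly: restricting to strip functions of the form $\sum_k e^{\delta z^2+\lambda_k z}a_k$ with $a_k$ simple, one has pointwise-in-$t$ holomorphy, hence the Poisson-kernel bound for $\log\|F(\theta)(t)\|_{[X,Y]_\theta}$, and Jensen plus Fubini plus H\"older in $t$ then deliver $\|F(\theta)\|_{L^r(J;[X,Y]_\theta)}\le\max_j\sup_y\|F(j+iy)\|$ without any appeal to duals at all. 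You should either import the norming fact carefully or switch to that Poisson/Fubini argument for the lower bound.
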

\begin{proof}
See \cite[Theorem~5.1.2]{BL}.
\end{proof}

We continue by quoting Sneiberg's extrapolation principle.

\begin{thm} \label{t-sneib}
Let $F_1, F_2,Z_1, Z_2$ be Banach spaces such that $(F_1, F_2)$ and $(Z_1, Z_2)$ 
are interpolation couples.
Assume $\mathcal R \in \cl(F_1;Z_1) 
\cap  \cl(F_2;Z_2)$ and put 
\[
\gamma:=\max\bigl (\|\mathcal R\|_{F_1 \to Z_1}, 
\|\mathcal R\|_{F_2 \to Z_2}\bigr ).
\]
 Suppose that for one $\theta \in (0,1)$ the operator
$\mathcal R \colon [F_1,F_2]_\theta \to [Z_1,Z_2]_\theta $ is a topological isomorphism
and let $\beta\ge \|\mathcal R^{-1}\|_{[Z_1,Z_2]_\theta \to [F_1,F_2]_\theta}$.
Then one has the following.
\begin{tabel} 
\item \label{p-sneib-1}
If $\tilde \theta \in (0,1)$ and 
\[
|\theta-\tilde\theta| < \frac{\min(\theta, 1-\theta)}{1+\beta \gamma} ,
\]
then 
$\mathcal R \colon [F_1,F_2]_{\tilde\theta} \to [Z_1,Z_2]_{\tilde\theta} $ 
remains surjective.
\item \label{p-sneib-2}
If $\tilde \theta \in (0,1)$ and 
\[
|\theta-\tilde\theta| \leq \frac{1}{6}\,\frac{ \min(\theta, 1-\theta)}{1 +2 \beta \gamma},
\]
then
$\mathcal R \colon [F_1,F_2]_{\tilde\theta} \to [Z_1,Z_2]_{\tilde\theta}$ remains an isomorphism
and
\[
\|\mathcal R^{-1} \|_{[Z_1,Z_2]_{\tilde\theta} \to [F_1,F_2]_{\tilde\theta}} 
\le 8 \beta .
\]
\end{tabel}
\end{thm}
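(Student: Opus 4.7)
The plan is to use the complex interpolation formalism. Recall that for an interpolation couple $(A_1,A_2)$ and $\vartheta\in(0,1)$, the space $[A_1,A_2]_\vartheta$ is the image of Calder\'on's space $\mathcal F(A_1,A_2)$ under the evaluation map $f\mapsto f(\vartheta)$; here $\mathcal F(A_1,A_2)$ consists of bounded continuous functions on the closed strip $\overline S=\{z : 0\le \operatorname{Re} z\le 1\}$ that are holomorphic on the interior and have boundary values in $A_1$ on $\operatorname{Re} z=0$ and in $A_2$ on $\operatorname{Re} z=1$, and the norm on $[A_1,A_2]_\vartheta$ is the quotient norm. The operator $\mathcal R$ induces by pointwise action a bounded map $\mathcal R_\ast:\mathcal F(F_1,F_2)\to\mathcal F(Z_1,Z_2)$ of norm at most $\gamma$, compatible with evaluation at each $\vartheta$.

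The technical heart of the argument is a Schwarz-type lemma on the strip: if $h\in\mathcal F(Z_1,Z_2)$ satisfies $h(\theta)=0$, then
\[
\|h(\tilde\theta)\|_{[Z_1,Z_2]_{\tilde\theta}}\le\kappa(\theta,\tilde\theta)\,\|h\|_{\mathcal F},
\]
where $\kappa(\theta,\tilde\theta)$ is (essentially) the hyperbolic distance between $\theta$ and $\tilde\theta$ in the strip and satisfies, to leading order, $\kappa(\theta,\tilde\theta)\le |\tilde\theta-\theta|/\min(\theta,1-\theta)$. One proves this by dividing $h$ by a scalar conformal factor that vanishes at $\theta$ and is unimodular on $\partial S$, then invoking the three-lines lemma.

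For (i) I would argue by successive approximation. Given $w\in[Z_1,Z_2]_{\tilde\theta}$ of norm less than $1$, pick $g\in\mathcal F(Z_1,Z_2)$ with $g(\tilde\theta)=w$ and $\|g\|_{\mathcal F}$ arbitrarily close to $\|w\|$. Set $u_0:=\mathcal R^{-1}g(\theta)\in[F_1,F_2]_\theta$, of norm at most $\beta\|g\|_{\mathcal F}$, and lift it to a function $f_0\in\mathcal F(F_1,F_2)$ of comparable norm. The difference $\mathcal R_\ast f_0-g$ vanishes at $\theta$ and has $\mathcal F$-norm at most $(1+\beta\gamma)\|g\|_{\mathcal F}$, so by the Schwarz estimate
\[
\|w-\mathcal R(f_0(\tilde\theta))\|_{[Z_1,Z_2]_{\tilde\theta}}\le\kappa(\theta,\tilde\theta)(1+\beta\gamma)\|g\|_{\mathcal F},
\]
which is strictly less than $\|w\|$ precisely under the smallness hypothesis of (i). Iterating on the residual and summing the resulting geometric series in $[F_1,F_2]_{\tilde\theta}$ produces $f$ with $\mathcal R f=w$, yielding surjectivity.

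The main obstacle is establishing the Schwarz estimate with the sharp denominator $\min(\theta,1-\theta)$; this is what makes the quantitative constants in the statement come out correctly. For (ii), the stronger smallness assumption forces the contraction ratio to be a fixed constant well away from $1$, so that summing the geometric series produces a uniformly bounded solution operator and the explicit bound $\|\mathcal R^{-1}\|_{[Z_1,Z_2]_{\tilde\theta}\to[F_1,F_2]_{\tilde\theta}}\le 8\beta$ (the constants $1/6$ and $2$ in (ii) emerge from optimizing this geometric-series computation). Injectivity of $\mathcal R$ at $\tilde\theta$ then follows either by applying the just-established surjectivity result to the adjoint $\mathcal R^\ast$ on the dual interpolation scale (which satisfies the same hypotheses with the same constants $\beta,\gamma$), or by checking directly that the constructed one-sided inverse is two-sided because the identity on $[F_1,F_2]_\theta$ extends uniquely across the scale.
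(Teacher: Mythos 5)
The paper gives no proof of this theorem — only a citation to Sneiberg, Vignati--Vignati, and, for the explicit constants, Egert's thesis — so the comparison is against that cited argument. Your strategy (Calderón's $\mathcal F$-space description of $[\cdot,\cdot]_\theta$, a scalar conformal factor producing a Schwarz-type estimate for functions in $\mathcal F$ vanishing at an interior point of the strip, and a Neumann-series successive-approximation scheme) is exactly the mechanism of that argument, so part (a) is correct in outline, modulo the exact form of $\kappa(\theta,\tilde\theta)$, which you explicitly postpone.

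The genuine gap is the injectivity required in part (b). Your iteration produces, for each $w$, some preimage of norm at most $\beta/(1-q)$; by itself this is a uniformly bounded \emph{right} inverse, not an isomorphism. Neither of your proposed alternatives closes this. The duality route applies (a) to $\mathcal R^\ast$, but that silently assumes the identification $[Z_1,Z_2]_{\tilde\theta}^\ast = [Z_1^\ast,Z_2^\ast]_{\tilde\theta}$; in general the lower complex method dualizes to the \emph{upper} method, and to collapse them back one needs density of $Z_1\cap Z_2$ and reflexivity of at least one endpoint space, none of which the theorem assumes — it only assumes interpolation couples. The second alternative, that "the constructed one-sided inverse is two-sided because the identity on $[F_1,F_2]_\theta$ extends uniquely across the scale", is not an argument. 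What is actually required is a direct lower-bound estimate, run with the same Schwarz lemma but in the opposite direction: take $x\in[F_1,F_2]_{\tilde\theta}$, lift it to $f\in\mathcal F(F_1,F_2)$, bound $\|f(\theta)\|_{[F_1,F_2]_\theta}\le\beta\|(\mathcal R_\ast f)(\theta)\|$ by the isomorphism at $\theta$, lift $f(\theta)$ back to some $\psi\in\mathcal F(F_1,F_2)$, and apply the Schwarz estimate both in the $Z$-scale (to compare $(\mathcal R_\ast f)(\theta)$ with $\mathcal R x$) and in the $F$-scale to $f-\psi$, which vanishes at $\theta$, to obtain $\|x\|\le c\,\|\mathcal R x\|$ for $\kappa$ small. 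This is where the stronger hypothesis with denominator $1+2\beta\gamma$ is actually consumed; it is not merely an artefact of "optimizing the geometric-series computation" for the right inverse, as your sketch asserts. Without this step, part (b) is unproved.
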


\begin{proof} 
Essentially, the theorem was discovered by Sneiberg \cite{Sneiberg} and elaborated in more
detail in \cite{VignatiVignati}. 
The explicit quantitative estimates as quoted here were worked out
only recently in \cite[Propositions~1.3.27 and 1.3.25]{Egert}.
\end{proof}

\section{Pairs of common maximal parabolic regularity}  \label{Snonaut4}

The aim of this section is to provide an abstract setting in which the property of 
non-autonomous maximal parabolic regularity can be extrapolated by using Sneiberg's theorem. 
\begin{defn} \label{d-commonpair}
Let $X,Y,D,E$ be Banach spaces, with continuous embeddings $X \hookrightarrow Y$, 
$D \hookrightarrow X$ and $E \hookrightarrow Y$.
Suppose that $D$ is dense in $X$ and $E$ is dense in $Y$.
Then we say that 
the tuples $(D,X)$ and $(E,Y)$ form a 
{\bf pair of common maximal parabolic regularity} if there is an operator 
$\cb \in \cl(E;Y)$  such that  
$D = \{ x \in E \cap X : \cb x \in X \} $, 
the operator $\cb$ satisfies maximal parabolic regularity on $Y$,  and
the restriction $\cb|_D$ satisfies $\cb|_D \in \cl(D;X)$ and maximal parabolic regularity on $X$.
\end{defn}

For convenience, as is common for interpolation results, 
we sometimes extend the notation $\cb$ to $\cb|_D$ or other restrictions of $\cb$ in this section.

If $(D,X)$ and $(E,Y)$ form a pair of common maximal parabolic regularity, then 
the spaces of maximal parabolic regularity interpolate as follows.

\begin{lemma} \label{l-inter}
Assume that $(D,X)$ and $(E,Y)$ form a pair of common maximal parabolic regularity.
Let $r_0,r_1 \in [1,\infty)$.
Then 
\begin{equation} \label{e-innt}
[\mr^{r_0}_0(J;D,X)), \mr^{r_1}_0(J;E,Y)]_\theta 
=
\mr^r_0(J;[ D, E]_\theta, [X,Y]_\theta)
\end{equation}
for all $\theta \in (0,1)$, where
$\frac{1}{r} =\frac{1-\theta}{r_0}+\frac{\theta}{r_1}$.
\end{lemma}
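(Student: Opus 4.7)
The strategy is to use the common operator $\cb$ from Definition~\ref{d-commonpair} to identify each maximal-regularity space with a Bochner $L^{r_j}$-space via the isomorphism $\partial_t+\cb$, and then transport the claimed identity across this isomorphism by means of Proposition~\ref{p-interLpp}. Since $\cb$ enjoys maximal parabolic regularity on both $X$ (with domain $D$) and $Y$ (with domain $E$), and autonomous maximal parabolic regularity is independent of the integrability exponent, the map $\Phi_j:=\partial_t+\cb$ is a topological isomorphism from $\mr^{r_j}_0(J;V_j,W_j)$ onto $L^{r_j}(J;W_j)$ for $j=0,1$, writing $(V_0,W_0):=(D,X)$ and $(V_1,W_1):=(E,Y)$. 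Interpolating these two isomorphisms and combining with Proposition~\ref{p-interLpp} yields an isomorphism
\[
\partial_t+\cb\colon [\mr^{r_0}_0(J;D,X),\mr^{r_1}_0(J;E,Y)]_\theta \longrightarrow L^r(J;[X,Y]_\theta).
\]

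I would first prove the continuous inclusion ``$\subseteq$'' in \eqref{e-innt}. Applying Proposition~\ref{p-interLpp} to the two bounded maps $u\mapsto u$ and $u\mapsto u'$, viewed as operators from $\mr^{r_j}_0(J;V_j,W_j)$ into $L^{r_j}(J;V_j)$ and $L^{r_j}(J;W_j)$ respectively, gives $u\in L^r(J;[D,E]_\theta)$ and $u'\in L^r(J;[X,Y]_\theta)$ for every $u$ in the interpolation space. The remaining condition $u(0)=0$ will follow from the continuity of the evaluation $u\mapsto u(0)$ on $L^r(J;[D,E]_\theta)\cap W^{1,r}(J;[X,Y]_\theta)\hookrightarrow C(\bar J;[X,Y]_\theta)$ together with its vanishing on the dense subspace $\mr^{r_0}_0(J;D,X)\cap \mr^{r_1}_0(J;E,Y)$ of the interpolation space.

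For the reverse inclusion, I would take $v\in \mr^r_0(J;[D,E]_\theta,[X,Y]_\theta)$ and set $f:=v'+\cb v\in L^r(J;[X,Y]_\theta)$. Surjectivity of the interpolated isomorphism above supplies some $u$ in the interpolation space with $u'+\cb u=f$; by the first inclusion, $u$ itself lies in $\mr^r_0(J;[D,E]_\theta,[X,Y]_\theta)$. Hence $w:=v-u$ satisfies $w'+\cb w=0$ and $w(0)=0$. A closed graph argument applied to the inclusion $D\to E$ gives $D\hookrightarrow E$ continuously; by monotonicity of complex interpolation, $[D,E]_\theta\hookrightarrow E$ and $[X,Y]_\theta\hookrightarrow Y$, so $w\in \mr^r_0(J;E,Y)$. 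Maximal parabolic regularity of $\cb$ on $Y$ with domain $E$ then forces $w=0$, whence $v=u$ lies in the interpolation space. The open mapping theorem upgrades this set-theoretic identification to equivalence of norms. I expect the main obstacle in carrying out this plan to be the rigorous justification that the vanishing boundary trace at $t=0$ is preserved under complex interpolation; the remaining steps are essentially an application of the standard trick of transporting identities through an interpolated isomorphism.
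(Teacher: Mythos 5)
Your proof is correct, but it takes a genuinely different route from the paper's. The paper establishes the isomorphism
\[
\frac{\partial }{\partial t} + \cb \colon \mr^r_0(J;[D,E]_\theta,[X,Y]_\theta) \to L^r(J;[X,Y]_\theta)
\]
directly, by invoking \cite[Lemma~5.3]{HaR} (interpolation of autonomous maximal parabolic regularity) together with the domain identification built into Definition~\ref{d-commonpair}; the claimed equality \eqref{e-innt} then follows immediately from the observation that \emph{the same} operator $\partial_t+\cb$ maps both $\mr^r_0(J;[D,E]_\theta,[X,Y]_\theta)$ and $[\mr^{r_0}_0(J;D,X),\mr^{r_1}_0(J;E,Y)]_\theta$ isomorphically onto the common target $L^r(J;[X,Y]_\theta)$, so the two must coincide (both sit inside $\mr^{\min(r_0,r_1)}_0(J;E,Y)$, where $\partial_t+\cb$ is injective). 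You bypass \cite[Lemma~5.3]{HaR} entirely: for ``$\subseteq$'' you interpolate the inclusion and the derivative as bounded maps into $L^{r_j}$-spaces and then recover the vanishing trace at $t=0$ by a density argument over $F_1\cap F_2$; for ``$\supseteq$'' you use the interpolated isomorphism only for \emph{existence} and dispose of the uniqueness question by pushing the homogeneous solution $w$ down into $\mr^r_0(J;E,Y)$, where injectivity of $\partial_t+\cb$ is already known from the hypotheses. What each approach buys: the paper's argument is shorter and cleaner, but it leans on a separate lemma that essentially encapsulates the content of this one in the autonomous, equal-exponent case; your argument is more self-contained and makes transparent exactly which pieces of Definition~\ref{d-commonpair} are used (only the uniqueness on the larger scale $(E,Y)$, not the full maximal regularity on $[X,Y]_\theta$), at the price of the slightly delicate trace-preservation step under complex interpolation, which you correctly identify as the main technical point. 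That step does go through: the interpolation space embeds into $L^r(J;[D,E]_\theta)\cap W^{1,r}(J;[X,Y]_\theta)\hookrightarrow C(\overline J;[X,Y]_\theta)$, and $F_1\cap F_2$ is dense in $[F_1,F_2]_\theta$, so the bounded trace functional $u\mapsto u(0)$ vanishes identically.
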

\begin{proof}
The pair $\bigl (\mr^{r_0}_0(J;D,X)), \mr^{r_1}_0(J;E,Y)\bigr )$
is an interpolation couple since both spaces continuously inject into
$\mr^{\min(r_0,r_1)}_0(J; E, Y)$.
It follows from \cite[Lemma 5.3]{HaR} that $\cb$ satisfies maximal parabolic 
regularity on $[X,Y]_\theta$, 
so
\begin{align}\label{iso1}
 \frac{\partial }{\partial t} +\cb \colon 
\mr^r_0(J;[ D, E]_\theta,[X,Y]_\theta) \to L^r(J;[X,Y]_\theta) 
\end{align}
is an isomorphism.
Here we used that $(D,X)$ and $(E,Y)$ form a pair of common maximal parabolic regularity.

On the other hand, $\frac{\partial }{\partial t} +\cb$ is an isomorphism from 
$\mr^{r_0}_0(J; D,X)$ onto $L^{r_0}(J;X)$ and 
from $\mr^{r_1}_0(J; E, Y)$ onto $L^{r_1}(J;Y)$.
Hence by interpolation, the operator 
\begin{align}\label{iso2} 
\frac{\partial }{\partial t} +\cb \colon 
[\mr^{r_0}_0(J;D,X), \mr^{r_1}_0(J;E,Y)]_\theta  
 \to [L^{r_0}(J;X), L^{r_1}(J;Y)]_\theta,  
\end{align}
is an isomorphism. 
The statement follows from combining \eqref{iso1}, \eqref{iso2} and Proposition~\ref{p-interLpp}. 
\end{proof}

This interpolation result together with Theorem~\ref{t-sneib} enables us to extrapolate
maximal parabolic regularity to non-autonomous parabolic operators.
We need a simple lemma.

\begin{lem} \label{l-measurable}
Let $X,Y,D,E$ be Banach spaces, with continuous embeddings $X \hookrightarrow Y$, 
$D \hookrightarrow X$ and $E \hookrightarrow Y$.
Suppose $D$ is dense in $X$ and $E$ is dense in $Y$.
Let  $\{ \cb(t)\}_{t\in J}$ be a subset of $\cl(E;Y)$ satisfying 
$\sup_{t \in J} \|\cb(t)\|_{E \to Y} < \infty$.
Assume $\cb(t)|_D \in \cl(D;X)$  for all $t \in J$.
Moreover, suppose that 
$\sup_{t \in J} \|\cb(t)\|_{D \to X} < \infty$.
Let $\theta \in (0,1)$.
Then one has the following.
\begin{tabel} 
\item \label{l-measurable-1.5}
If $t \in J$, then $\cb(t)|_{[D,E]_\theta}\in\cl([D,E]_\theta; [X,Y]_\theta)$.
Moreover 
\begin{equation} \label{e-supremuminterp}
\sup_{t \in J} \|\cb(t)\|_{[D;E]_\theta \to [X;Y]_\theta} < \infty.
\end{equation}
\item \label{l-measurable-2}
Suppose $J \ni t \mapsto \cb(t)|_D $ is measurable.
Then the map 
$J \ni t \mapsto \cb(t)\psi \in [X,Y]_\theta$ is measurable
for all $\psi \in [D,E]_\theta$.
\item \label{l-measurable-3}
Let $r \in (1,\infty)$.
Then the map
\begin{equation} \label{e-isoOO}
\frac{\partial}{\partial t} +\cb(\cdot) 
   \colon \mr^r_0(J;[D,E]_\theta,[X,Y]_\theta) \to L^r(J;[X,Y]_\theta)
\end{equation} 
is well defined.
\end{tabel}
\end{lem}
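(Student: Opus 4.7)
The strategy is to handle the three parts in order: part~(i) reduces to a pointwise application of the complex interpolation theorem, part~(ii) follows by combining measurability on a dense subset with the uniform bound from~(i), and part~(iii) is then a standard composition/approximation argument together with the pointwise estimate from~(i).

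For~(i), fixing $t \in J$ I would invoke the classical complex interpolation theorem on $\cb(t) \in \cl(D;X) \cap \cl(E;Y)$ to conclude $\cb(t)|_{[D,E]_\theta} \in \cl([D,E]_\theta;[X,Y]_\theta)$ together with the log-convex bound
$$\|\cb(t)\|_{[D,E]_\theta \to [X,Y]_\theta} \le \|\cb(t)\|_{D\to X}^{1-\theta}\,\|\cb(t)\|_{E\to Y}^{\theta}.$$
Taking the supremum over $t\in J$ and using the two uniform bounds assumed in the lemma immediately yields \eqref{e-supremuminterp}.

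For~(ii), I first observe that for $\cb(t)|_D$ to make sense one has $D \subset E$; a short closed-graph argument through the common ambient space $Y$ then upgrades this to a continuous embedding $D \hookrightarrow E$, so that $D \cap E = D$ is dense in $[D,E]_\theta$ by the standard density property of the complex method. For $\psi \in D$, the hypothesis that $t \mapsto \cb(t)|_D$ is measurable gives measurability of $t \mapsto \cb(t)\psi$ into $X$, and the continuous embedding $X \hookrightarrow [X,Y]_\theta$ promotes this to measurability into $[X,Y]_\theta$. For arbitrary $\psi \in [D,E]_\theta$, I would approximate by $\psi_n \in D$ in $[D,E]_\theta$-norm; by the uniform bound of~(i), $t\mapsto \cb(t)\psi_n$ converges to $t\mapsto \cb(t)\psi$ in $[X,Y]_\theta$ \emph{uniformly} on $J$, so the limit is measurable.

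For~(iii), let $u \in \mr^r_0(J;[D,E]_\theta,[X,Y]_\theta)$. The derivative $u'$ is in $L^r(J;[X,Y]_\theta)$ by definition of the space, so only $\cb(\cdot)u(\cdot)$ requires verification. The bound from~(i) gives the pointwise estimate
$$\|\cb(t)u(t)\|_{[X,Y]_\theta} \le \Bigl(\sup_{s \in J}\|\cb(s)\|_{[D,E]_\theta \to [X,Y]_\theta}\Bigr)\|u(t)\|_{[D,E]_\theta},$$
and the right-hand side lies in $L^r(J)$ because $u \in L^r(J;[D,E]_\theta)$. To obtain Bochner measurability of $t\mapsto \cb(t)u(t)$, I would approximate $u$ by simple $[D,E]_\theta$-valued functions, apply~(ii) on each level set to get measurability of each piece, and pass to the pointwise limit. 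The only mildly delicate point in the whole lemma is the density step in~(ii), which rests on the tacit embedding $D \hookrightarrow E$ established by the closed graph argument; everything else is bookkeeping around complex interpolation and Bochner measurability.
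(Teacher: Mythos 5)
Your proposal is correct and follows essentially the same route as the paper: part~(i) via the standard complex interpolation operator bound, part~(ii) via density of $D$ in $[D,E]_\theta$ together with the uniform operator bound, and part~(iii) via simple-function approximation plus the pointwise estimate. The only additions beyond the paper's (very terse) proof are cosmetic: you spell out the log-convex interpolation inequality, note the tacit inclusion $D\subset E$ with a closed-graph remark, and make the Bochner-measurability step explicit, all of which are consistent with, and merely elaborate on, the published argument.
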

\begin{proof}
`\ref{l-measurable-1.5}'.
This is well known from complex interpolation.

`\ref{l-measurable-2}'.
By assumption, for every $\psi \in D$ the map $J \ni t \mapsto \cb(t)\psi \in X$ 
is measurable.
Since the inclusion $X \hookrightarrow [X,Y]_\theta$ is 
continuous, also the map $J \ni t \mapsto  \cb(t)\psi \in [X,Y]_\theta$ is measurable
for all $\psi \in D$.
Let now $\psi \in [D,E]_\theta$.
Because $D$ is dense in $[D,E]_\theta$
by \cite[Theorem~1.9.3]{Tri}, there
is a sequence $\{\psi_n\}_n$ in $D$ which converges to $\psi$ in $[D,E]_\theta$.
But then
\eqref{e-supremuminterp} implies that the function 
$J \ni t \mapsto \cb(t)\psi \in [X,Y]_\theta$ is the pointwise limit of the functions 
$J \ni t \mapsto \cb(t)\psi_n \in [X,Y]_\theta$.
Hence it is measurable.

`\ref{l-measurable-3}'.
One can easily deduce from Statement~\ref{l-measurable-2} that for every 
$\eta \in (0,1)$ and $v \in L^r(J;[D,E]_\eta)$
the map
\[
J \ni t \mapsto \cb(t)v(t) \in [X,Y]_\eta
\]
is measurable.
Since $\sup_{t \in J} \|\cb(t)\|_{[D;E]_\theta \to [X;Y]_\theta} < \infty$,
it follows that \eqref{e-isoOO} is well defined.
\end{proof}

The main theorem of this section is the following.

\begin{thm} \label{t-extrapolatabstr}
Suppose the tuples $(D,X)$ and $(E,Y)$ form a pair of common maximal parabolic regularity.
For all $t \in J$ let $\cb(t) \in \cl(E;Y)$ and suppose that $\cb(t)|_D \in \cl(D;X)$.
Assume that the maps
\[
J \ni t \mapsto \cb(t)|_D \in \cl(D;X) 
\quad \mbox{and} \quad
J \ni t \mapsto \cb(t) \in \cl(E;Y) 
\]
are measurable and
$\sup_{t \in J} ( \|\cb(t)\|_{D \to X} +\|\cb(t)\|_{E \to Y} ) < \infty$.
Let $r_0, r_1 \in (1,\infty)$ and $\theta \in (0,1)$.
Set $\frac{1}{r} = \frac{1-\theta}{r_0} + \frac{\theta}{r_1}$.
Suppose that 
\begin{equation} \label{e-iso;1}
\frac{\partial}{\partial t} +\cb(\cdot) 
   \colon \mr^r_0(J;[D,E]_\theta,[X,Y]_\theta) \to L^r(J;[X,Y]_\theta)
\end{equation} 
 is a topological isomorphism.
Then there exists an $\varepsilon \in (0,\min(\theta,1-\theta))$ such that 
\[
\frac{\partial}{\partial t} +\cb(\cdot) 
   \colon \mr^s_0(J;[D,E]_{\tilde \theta},[X,Y]_{\tilde \theta}) \to L^s(J;[X,Y]_{\tilde \theta})
\]
is a topological isomorphism for all $\tilde \theta \in (\theta - \varepsilon,\theta + \varepsilon)$,
where $s:= \bigl (\frac{1-\tilde\theta}{r_0}+ \frac{\tilde\theta}{r_1}\Big )^{-1}$
\end{thm}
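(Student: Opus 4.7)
The plan is to apply Sneiberg's theorem (Theorem~\ref{t-sneib}) to the operator $\mathcal R := \frac{\partial}{\partial t} + \cb(\cdot)$, with the two endpoint couples chosen to be the natural maximal regularity spaces attached to $(D,X)$ and $(E,Y)$. Concretely, I would set
\[
F_1 := \mr^{r_0}_0(J;D,X), \quad F_2 := \mr^{r_1}_0(J;E,Y), \quad Z_1 := L^{r_0}(J;X), \quad Z_2 := L^{r_1}(J;Y),
\]
and verify that $\mathcal R$ is bounded from $F_1$ to $Z_1$ and from $F_2$ to $Z_2$. The derivative part is trivial by definition of the maximal regularity norm, and the multiplication part is controlled by $\sup_{t\in J}\|\cb(t)\|_{D\to X}$ and $\sup_{t\in J}\|\cb(t)\|_{E\to Y}$, which are finite by hypothesis; the measurability of $t\mapsto\cb(t)v(t)$ for $v$ in the respective spaces is built into the maximal regularity framework (and is what Lemma~\ref{l-measurable}\ref{l-measurable-2} guarantees on the interpolation scale).

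Next I would identify the complex interpolation spaces at level $\theta$. On the target side, Proposition~\ref{p-interLpp} gives
\[
[Z_1,Z_2]_\theta = [L^{r_0}(J;X), L^{r_1}(J;Y)]_\theta = L^r(J;[X,Y]_\theta),
\]
with $\frac{1}{r} = \frac{1-\theta}{r_0} + \frac{\theta}{r_1}$. On the domain side, the fact that $(D,X)$ and $(E,Y)$ form a pair of common maximal parabolic regularity lets me invoke Lemma~\ref{l-inter} to obtain
\[
[F_1,F_2]_\theta = \mr^r_0(J;[D,E]_\theta,[X,Y]_\theta).
\]
Consequently, the hypothesis \eqref{e-iso;1} precisely says that $\mathcal R\colon [F_1,F_2]_\theta \to [Z_1,Z_2]_\theta$ is a topological isomorphism, so the assumptions of Theorem~\ref{t-sneib} are met; set $\beta:=\|\mathcal R^{-1}\|_{[Z_1,Z_2]_\theta\to[F_1,F_2]_\theta}$ and $\gamma:=\max(\|\mathcal R\|_{F_1\to Z_1},\|\mathcal R\|_{F_2\to Z_2})$.

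Now Theorem~\ref{t-sneib}\ref{p-sneib-2} furnishes an explicit $\varepsilon \in (0,\min(\theta,1-\theta))$, namely
\[
\varepsilon := \frac{1}{6}\,\frac{\min(\theta,1-\theta)}{1+2\beta\gamma},
\]
such that $\mathcal R\colon[F_1,F_2]_{\tilde\theta}\to[Z_1,Z_2]_{\tilde\theta}$ is an isomorphism for every $\tilde\theta\in(\theta-\varepsilon,\theta+\varepsilon)$. Reinterpreting this for each such $\tilde\theta$ via Proposition~\ref{p-interLpp} and Lemma~\ref{l-inter} (applied with the exponent $s$ in place of $r$) yields exactly the claim that
\[
\frac{\partial}{\partial t}+\cb(\cdot)\colon \mr^s_0(J;[D,E]_{\tilde\theta},[X,Y]_{\tilde\theta})\to L^s(J;[X,Y]_{\tilde\theta})
\]
is an isomorphism, where $\frac{1}{s}=\frac{1-\tilde\theta}{r_0}+\frac{\tilde\theta}{r_1}$.

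The only genuinely delicate point is to make sure that $\mathcal R$ is simultaneously well defined and bounded on \emph{both} endpoint couples, so that Sneiberg applies; once this and the interpolation identity of Lemma~\ref{l-inter} are in hand, the proof is essentially a mechanical combination of these ingredients. Measurability and the multiplier bound on the intermediate scale needed to formulate \eqref{e-iso;1} itself are precisely the content of Lemma~\ref{l-measurable}, so no additional analysis is required beyond invoking it.
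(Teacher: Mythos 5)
Your proposal is correct and follows essentially the same route as the paper: set up the two endpoint couples $F_i,Z_i$, check boundedness of $\frac{\partial}{\partial t}+\cb(\cdot)$ on both, use Lemma~\ref{l-inter} and Proposition~\ref{p-interLpp} to identify $[F_1,F_2]_\theta$ and $[Z_1,Z_2]_\theta$ with $\mr^r_0(J;[D,E]_\theta,[X,Y]_\theta)$ and $L^r(J;[X,Y]_\theta)$, invoke Sneiberg's theorem, and translate back via the same two identifications. The only cosmetic difference is that you record the explicit $\varepsilon$ from Theorem~\ref{t-sneib}\ref{p-sneib-2} rather than the qualitative "$\tilde\theta$ sufficiently close to $\theta$" phrasing the paper uses.
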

\begin{proof}
The operators
\[
\frac{\partial}{\partial t} +\cb(\cdot) \colon \mr^{r_0}_0(J;D,X) \to L^{r_0}(J;X)
\]
and 
\[
\frac{\partial}{\partial t} +\cb(\cdot) \colon \mr^{r_1}_0(J;E,Y) \to L^{r_1}(J;Y)
\]
are continuous and their norms are bounded by $\max(1,\gamma)$, where 
$\gamma = \sup_{t \in J} \|\cb(t)\|_{D \to X} + \sup_{t \in J} \|\cb(t)\|_{E \to Y}$.
Moreover, since the tuples $(D,X)$ and $(E,Y)$ form a pair of 
common maximal parabolic regularity by assumption, we may apply Lemma~\ref{l-inter}
to obtain the interpolation identity \eqref{e-innt}.
Using also Proposition~\ref{p-interLpp}, one can rewrite \eqref{e-iso;1}
as a topological isomorphism 
\begin{equation}
\frac{\partial}{\partial t} +\cb(\cdot)
\colon [\mr^{r_0}_0(J;D,X), \mr^{r_1}_0(J;E,Y)]_\theta 
   \to [L^{r_0}(J;X),L^{r_1}(J;Y)]_\theta
 .  
\label{et-extrapolatabstr;1}
\end{equation}
By Theorem~\ref{t-sneib} the isomorphism in \eqref{et-extrapolatabstr;1}
remains a topological isomorphism,
if $\theta$ is replaced by $\tilde\theta \in (0,1)$, $r$ is replaced by 
$s = \bigl (\frac{1-\tilde\theta}{r_0}+ \frac{\tilde\theta}{r_1}\Big )^{-1}$
and $\tilde\theta$ is sufficiently close to $\theta$.
Then the theorem follows by applying again 
Lemma~\ref{l-inter} and Proposition~\ref{p-interLpp}.
\end{proof}

As a simple consequence to Theorem~\ref{t-extrapolatabstr}, we obtain the following. 

\begin{corollary} \label{c-extrap}
Let $X$, $D$ be Banach spaces with $D$ densely and continuously embedded in $X$.
Let $J \ni t \mapsto \cb(t) \in \cl(D;X)$ be a bounded and measurable map
and suppose that the operator $\cb(t)$ is closed in $X$ for all $t \in J$.
Let $\ck \in \cl(D;X)$ and suppose that $\ck$ satisfies maximal parabolic regularity on $X$.
Let $r \in (1,\infty)$.
Suppose that $\{\cb(t)\}_{t\in J}$ satisfies  maximal parabolic 
$L^r(J;X)$-regularity.
Then there exists an open interval $I \subset (1,\infty)$ with 
$r\in I$ such that $\{\cb(t)\}_{t\in J}$ satisfies  maximal parabolic 
$L^s(J;X)$-regularity for all $s\in I$.
\end{corollary}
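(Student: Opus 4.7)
The plan is to reduce the corollary to Theorem~\ref{t-extrapolatabstr} by collapsing the second pair onto the first, that is, by choosing $E := D$ and $Y := X$. Under this identification the complex interpolation spaces satisfy $[D,E]_\eta = D$ and $[X,Y]_\eta = X$ for every $\eta \in (0,1)$, so the spatial data do not change along the interpolation scale and only the temporal integrability exponent moves.

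First I would verify the standing hypotheses of Theorem~\ref{t-extrapolatabstr}. The tuples $(D,X)$ and $(E,Y)=(D,X)$ form a pair of common maximal parabolic regularity in the sense of Definition~\ref{d-commonpair}: the witness operator there (called $\cb$ in the definition) is taken to be the operator $\ck$ provided by the corollary. Indeed $\ck \in \cl(D;X)$ satisfies maximal parabolic regularity on $X$ by assumption, the restriction trivially coincides with $\ck$, and the condition $D = \{x \in D \cap X : \ck x \in X\}$ reduces to $D=D$. The measurability and uniform boundedness requirements on $t\mapsto \cb(t)$ in both $\cl(D;X)$ and $\cl(E;Y)$ are just the single hypothesis of the corollary, repeated.

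Next I would choose the endpoint exponents. Fix any $r_0 \in (1,r)$ and $r_1 \in (r,\infty)$ and let $\theta \in (0,1)$ be the unique number with $\frac{1}{r} = \frac{1-\theta}{r_0}+\frac{\theta}{r_1}$. The assumption that $\{\cb(t)\}_{t\in J}$ satisfies maximal parabolic $L^r(J;X)$-regularity is then precisely the isomorphism property \eqref{e-iso;1}, because the interpolated domains collapse to $D$ and $X$ as observed above. Theorem~\ref{t-extrapolatabstr} now supplies an $\varepsilon \in (0,\min(\theta,1-\theta))$ such that the isomorphism persists when $\theta$ is replaced by any $\tilde\theta \in (\theta-\varepsilon,\theta+\varepsilon)$, with the integrability exponent replaced by $s(\tilde\theta) = \bigl(\frac{1-\tilde\theta}{r_0}+\frac{\tilde\theta}{r_1}\bigr)^{-1}$.

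Finally I would read off the claim. The map $\tilde\theta \mapsto s(\tilde\theta)$ is continuous and, since $r_0 \neq r_1$, strictly monotone with $s(\theta)=r$; hence its image $I := s\bigl((\theta-\varepsilon,\theta+\varepsilon)\bigr)$ is an open subinterval of $(1,\infty)$ containing $r$, and on every $s \in I$ we recover maximal parabolic $L^s(J;X)$-regularity of $\{\cb(t)\}_{t\in J}$. I do not anticipate any real obstacle: the whole content is the observation that Theorem~\ref{t-extrapolatabstr} applies to the degenerate pair $(D,X)=(E,Y)$, and the only point requiring a brief check is the existence of a witness operator in Definition~\ref{d-commonpair}, which is precisely what $\ck$ is introduced for in the hypotheses.
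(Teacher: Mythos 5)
Your proposal is correct and matches the paper's proof, which simply says ``Choose $D=E$ and $X=Y$ in Lemma~\ref{l-inter} and Theorem~\ref{t-extrapolatabstr}''; you have spelled out the verification of Definition~\ref{d-commonpair} with $\ck$ as the witness operator, the collapse $[D,D]_\eta = D$ and $[X,X]_\eta = X$, and the monotonicity of $\tilde\theta \mapsto s(\tilde\theta)$, all of which are exactly the implicit steps behind the paper's one-line argument.
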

\begin{proof}
Choose $D=E$ and $X=Y$ in Lemma \ref{l-inter} and Theorem \ref{t-extrapolatabstr}.
\end{proof}

Theorem~\ref{t-extrapolatabstr} and Corollary~\ref{c-extrap} show the abstract
 principle we use in the sequel for the extrapolation of maximal $L^r(J;X)$-regularity. 
In Corollary~\ref{c-extrap} the space $D$ and $X$ are connected via \emph{some} 
autonomous reference operator $\ck$ that has maximal regularity.
Then Corollary~\ref{c-extrap} gives that for every 
non-autonomous operator family $J \ni t \mapsto \cb(t) \in \cl(D;X)$
with maximal parabolic $L^r(J;X)$-regularity the regularity extrapolates
around~$r$.
We expect that the interpolation formula 
\eqref{e-innt} is of independent interest and may serve for other purpose also in
different contexts.
In Section~\ref{Snonaut3} we apply this principle to non-autonomous families
 of operators in Hilbert spaces generated by families of sesquilinear forms, 
and in Sections \ref{Snonaut5}--\ref{Snonaut7}
to non-autonomous elliptic differential operators in Sobolev spaces.
In Theorem~\ref{t-extrapolatabstr} and Corollary~\ref{c-extrap}, however, the quantitative 
estimates of Theorem~\ref{t-sneib} are lost, as \eqref{e-innt} holds only with 
equivalence of norms and the exact constants seem to be hard to control 
(cf.\ Remark \ref{r-Pich}).
In order to get some quantitative results in the specific
 settings of Sections \ref{Snonaut3} and \ref{Snonaut7}, we will use direct 
proofs based on Theorem~\ref{t-sneib}.
The following definition and interpolation
 result will be useful tools. 

\begin{defn} \label{d-tilde}
Let $X$, $D$ be Banach spaces with $D$ densely and continuously embedded in $X$.
Let $\ck \in \cl(D;X)$ and suppose that $\ck$ satisfies maximal parabolic regularity on $X$.
For all $r \in (1,\infty)$, denote by $\mr^r_0(J;D,X)\,\widetilde{\;}\;$ 
the space $\mr^r_0(J;
D,X)$ equipped with the  norm
\[
u \mapsto 
\|u\|_{\mr^r_0(J;D,X)\,\widetilde{\;}}
= \|(\frac{\partial }{\partial t} + \ck ) u \|_{L^r(J;X)} .
\]
\end{defn}
It will be clear from the context which operator $\ck$ is involved.
Obviously
\begin{equation}\label{e-iso}
\frac{\partial }{\partial t} + \ck \colon \mr^r_0(J;D,X)\,\widetilde{\;}\; \to L^r(J;X)
\end{equation}
is an \emph{isometric} isomorphism.
For all $r \in (1,\infty)$ define 
\begin{equation}\label{d-C}
C_{\ck}^r := \|(\frac{\partial }{\partial t} + \ck )^{-1} \|_{L^r(J;X) \to \mr^r_0(J;D,X)} .
\end{equation}
Then 
\begin{equation}
\|u\|_{\mr^r_0(J;D,X)}
\leq C_{\ck}^r \, \|(\frac{\partial }{\partial t} + \ck ) u\|_{L^r(J;X)}
= C_{\ck}^r \, \|u\|_{\mr^r_0(J;D,X)\,\widetilde{\;}}
\label{ed-tilde;1}
\end{equation}
and 
\begin{eqnarray}
\|u\|_{\mr^r_0(J;D,X)\,\widetilde{\;}}
& = & \|(\frac{\partial }{\partial t} + \ck ) u\|_{L^r(J;X)}  \nonumber  \\
& \leq & \|u'\|_{L^r(J;X)} + \|\ck\|_{D \to X} \, \|u\|_{\mr^r_0(J;D,X)}  \nonumber  \\
& \leq & (1 \vee \|\ck\|_{D \to X}) \, \|u\|_{\mr^r_0(J;D,X)}
\label{ed-tilde;2}
\end{eqnarray}
for all $u \in \mr^r_0(J;D,X)$.
So the two norms are equivalent.

\begin{lem} \label{lemma-t-intgeropo}
Adopt the notation as in Definition \ref{d-tilde}.
Let $\theta \in (0,1)$ and $r_0, r_1 \in (1,\infty)$.
Then
\begin{equation} \label{e-interrrpol}
[\mr^{r_0}_0(J;D,X)\,\widetilde{\;}, 
 \mr^{r_1}_0(J;D,X)\,\widetilde{\;}\,]_\theta
= \mr^r_0(J;D,X)\,\widetilde{\;}
\end{equation}
with {\bf equality} of norms, where $r \in (1,\infty)$ is such that
$\frac{1}{r} = \frac{1-\theta}{r_0}+\frac{\theta}{r_1}$.
\end{lem}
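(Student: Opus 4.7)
The plan is to transfer the interpolation identity on the $L^r$-side back through the isometric isomorphism $T := \frac{\partial}{\partial t}+\ck$, using Proposition~\ref{p-interLpp} as the source of the \emph{equality} of norms. By construction, for every $s \in (1,\infty)$ the operator $T$ is an isometric isomorphism from $\mr^s_0(J;D,X)\,\widetilde{\;}$ onto $L^s(J;X)$, and this is precisely the defining property of the tilde-norm.

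First I would check that $(\mr^{r_0}_0(J;D,X)\,\widetilde{\;}, \mr^{r_1}_0(J;D,X)\,\widetilde{\;})$ is an admissible interpolation couple. Since $J$ is bounded, both spaces continuously embed into $\mr^{\min(r_0,r_1)}_0(J;D,X)$, which gives a common ambient Hausdorff topological vector space. On this ambient space $T$ is well-defined and injective (as $u' + \ck u = 0$ with $u(0)=0$ forces $u=0$). Hence $T$ can be viewed as a single injective linear map on $\mr^{r_0}_0\,\widetilde{\;} + \mr^{r_1}_0\,\widetilde{\;}$, and it restricts to an \emph{isometry} on each endpoint space; the same is true of $T^{-1}$ on the corresponding $L^{r_i}$-side.

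Next I would invoke the interpolation property of $[\cdot,\cdot]_\theta$ (e.g.\ \cite[Theorem~4.1.2]{BL}) applied to both $T$ and $T^{-1}$, which have operator norm exactly $1$ on each endpoint. This yields that
\[
T \colon [\mr^{r_0}_0(J;D,X)\,\widetilde{\;},\mr^{r_1}_0(J;D,X)\,\widetilde{\;}]_\theta
\to [L^{r_0}(J;X),L^{r_1}(J;X)]_\theta
\]
is an isometric isomorphism. Combining this with Proposition~\ref{p-interLpp}, which identifies the right-hand side \emph{isometrically} with $L^r(J;X)$, we find that $T$ is an isometric isomorphism from $[\mr^{r_0}_0\,\widetilde{\;},\mr^{r_1}_0\,\widetilde{\;}]_\theta$ onto $L^r(J;X)$.

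Finally, since by definition $T$ is also an isometric isomorphism from $\mr^r_0(J;D,X)\,\widetilde{\;}$ onto the \emph{same} space $L^r(J;X)$, and since $T$ is a single injective map on the ambient space $\mr^{\min(r_0,r_1)}_0\,\widetilde{\;}$ that contains both $\mr^r_0\,\widetilde{\;}$ and $[\mr^{r_0}_0\,\widetilde{\;},\mr^{r_1}_0\,\widetilde{\;}]_\theta$, the two spaces must coincide as subsets and carry identical norms. The main obstacle is the bookkeeping in the last step, namely making sure that both interpolation space and $\mr^r_0\,\widetilde{\;}$ really live inside a common ambient space on which $T$ acts as a single injective operator, so that the equality of their $T$-images at the level of $L^r(J;X)$ forces pointwise equality; once this is in place, the \emph{equality} (rather than equivalence) of norms comes for free from the isometries obtained in the previous steps and from Proposition~\ref{p-interLpp}.
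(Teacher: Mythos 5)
Your argument is exactly the one the paper uses: transfer the exact interpolation identity for the $L^{r}(J;X)$ spaces from Proposition~\ref{p-interLpp} through the isometric isomorphism $\frac{\partial}{\partial t}+\ck$ in \eqref{e-iso}. The paper states this in two lines; your extra care about the common ambient space and the injectivity of $T$ on it is the (implicit) justification the paper leaves to the reader, but it is the same proof.
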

\begin{proof}
By Proposition~\ref{p-interLpp} one has the interpolation identity
\begin{equation} \label{e-interrrpol0}
{}[ L^{r_0}(J;X),L^{r_1}(J;X)]_\theta=L^r(J;X) ,
\end{equation}
with equality of norms.
Then the \emph{isometric} isomorphism \eqref{e-iso} 
carries \eqref{e-interrrpol0} over to \eqref{e-interrrpol}, with equality of norms.
\end{proof}

\section{Maximal parabolic regularity and form methods}  \label{Snonaut3}
In this section, we will investigate maximal parabolic regularity in a Hilbert
space setting. 
For convenience, recall the following classical existence result,
which will serve as the starting point for the remainder of this paper.

\begin{prop} \label{p-Lions}
Let $V$ and $H$ be Hilbert spaces with $V$ densely and continuously
embedded into $H$.
For every $t \in J$ let $\mathfrak s_t$ be a sesquilinear form on $V$.
Let $c_\bullet, c^\bullet > 0$.
Suppose that the map $t \mapsto \mathfrak s_t[\varphi, \psi]$ 
from $J$ into $\Ci$ is measurable for all 
$\psi, \varphi \in V$.
Suppose that $\RRe \mathfrak s_t[\psi,\psi]  \ge c_\bullet\|\psi\|^2_V$ 
and $|\mathfrak s_t[\psi, \varphi]| \le c^\bullet \, \|\psi\|_V \, \|\varphi\|_V$
for all $\varphi,\psi \in V$ and $t \in J$.
For all $t \in J$ 
let $\cb(t) \colon V \to V^*$ be the linear operator which 
is induced by the sesquilinear form $\mathfrak s_t$.
Then for all $f \in L^2(J;V^*)$ there exists a unique 
$u \in \mr^2_0(J;V,V^*)$ such that 
\begin{equation} \label{e-nonautpara}
u'(t)+\cb(t)u(t) =f(t)
\end{equation}
for a.e.\ $t \in J$.
Moreover, 
\begin{equation} \label{e-apriori1}
\|u\|_{L^2(J;V)} \le \frac{1}{c_\bullet} \|f\|_{L^2(J;V^*)}
\quad \mbox{and} \quad
\|u'\|_{L^2(J;V^*)} \le \bigl (1+\frac{c^\bullet}{c_\bullet} \bigr )\|f\|_{L^2(J;V^*)}.
\end{equation}
In particular,
\begin{equation}
\|u\|_{\mr^2_0(J;V,V^*)}
\leq \frac{1 + c_\bullet + c^\bullet}{c_\bullet} \, \|f\|_{L^2(J;V^*)}
\label{ep-Lions;10}
\end{equation}
\end{prop}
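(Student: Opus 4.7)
The plan is to follow the classical approach of J.-L.~Lions for non-autonomous variational evolution equations (as in \cite[Chapter~XVIII, \S3, Theorem~2]{DLen5}); existence is obtained via a variational inf-sup argument, and the quantitative bounds come from a standard energy identity. In fact, one may simply cite that reference; below I sketch the ingredients.

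For existence, I would set up a variational formulation on the test space $\Phi := \{\varphi \in C^1([0,T];V): \varphi(T)=0\}$ and consider the sesquilinear form
\begin{equation*}
E(u,\varphi) := \int_0^T \bigl(\mathfrak s_t[u(t),\varphi(t)] - \langle u(t),\varphi'(t)\rangle_H\bigr)\,dt
\end{equation*}
on $L^2(J;V)\times \Phi$, together with $L(\varphi) := \int_0^T \langle f(t),\varphi(t)\rangle_{V^*,V}\,dt$. Measurability and boundedness of the forms make $E$ continuous in its first argument, and $L$ is continuous on $\Phi$. Testing $E(\varphi,\varphi)$ and using $\varphi(T)=0$ to rewrite the $H$-inner product term as a non-negative boundary contribution $\tfrac{1}{2}\|\varphi(0)\|_H^2$, combined with the coercivity of $\mathfrak s_t$, yields the inf-sup bound required by Lions' projection lemma (the non-symmetric Lax--Milgram variant). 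This produces $u \in L^2(J;V)$ satisfying $E(u,\varphi)=L(\varphi)$ for all $\varphi\in\Phi$. Reading this identity distributionally in $t$ forces $u' = f - \cb(\cdot)u \in L^2(J;V^*)$, so $u \in \mr^2_0(J;V,V^*)$; the continuous embedding $\mr^2_0(J;V,V^*)\hookrightarrow C([0,T];H)$ makes sense of the trace $u(0)$, which is forced to vanish by testing against a $\varphi\in\Phi$ with $\varphi(0)\neq 0$.

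For the estimates, I would invoke the energy identity $\tfrac{d}{dt}\|u(t)\|_H^2 = 2\,\RRe\langle u'(t),u(t)\rangle_{V^*,V}$, valid on $\mr^2_0(J;V,V^*)$. Substituting \eqref{e-nonautpara} and integrating from $0$ to $T$ gives
\begin{equation*}
\tfrac{1}{2}\|u(T)\|_H^2 + \int_0^T \RRe\,\mathfrak s_t[u(t),u(t)]\,dt = \int_0^T \RRe\langle f(t),u(t)\rangle\,dt,
\end{equation*}
and coercivity combined with Cauchy--Schwarz yields $c_\bullet\|u\|_{L^2(J;V)}^2 \le \|f\|_{L^2(J;V^*)}\|u\|_{L^2(J;V)}$, which is the first inequality in \eqref{e-apriori1}. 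For the second, write $u'(t) = f(t) - \cb(t)u(t)$, use $\|\cb(t)\|_{V\to V^*}\le c^\bullet$ and the first estimate. Inequality \eqref{ep-Lions;10} then follows by summation. Uniqueness is immediate from the first estimate applied with $f=0$. The main obstacle is the coercivity condition for Lions' projection lemma: the space $\Phi$, equipped with an appropriate norm, is not complete, and the boundary-term trick producing the inf-sup bound has to be combined with a careful approximation argument in order to register $u$ as an element of $L^2(J;V)$; this is precisely the technical content of \cite[Chapter~XVIII, \S3]{DLen5}, which justifies leaning on that reference rather than redoing the construction.
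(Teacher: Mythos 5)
Your proof takes essentially the same route as the paper: existence and uniqueness are delegated to the Lions theory in Dautray--Lions (the paper cites \cite[Section~XVIII.3, Remark~9]{DLen5}; you cite Theorem~2 of the same section and sketch the projection-lemma argument behind it), and the a priori bounds \eqref{e-apriori1}--\eqref{ep-Lions;10} are derived exactly as in the paper from the energy identity, coercivity, Cauchy--Schwarz, and the boundedness $\|\cb(t)\|_{V\to V^*}\le c^\bullet$. The argument is correct; the only difference is that you unpack the variational construction from the reference rather than simply citing it.
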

\begin{proof}
The existence and uniqueness in (\ref{e-nonautpara}) follows from 
\cite[Section~XVIII.3 Remark 9]{DLen5}.
We next prove the estimates (\ref{e-apriori1}).
If $\tau \in \overline J$, then the energy equality 
\[
\frac{1}{2} \|u(\tau)\|_H^2 + \RRe \int_0^\tau  \mathfrak s_t[u(t),u(t)] \, dt
=\RRe \int_0^\tau \langle f(t),u(t) \rangle_{V^* \times V} \, dt
\]
follows from \eqref{e-nonautpara}, cf.\ \cite[Section~XVIII.3 Equation~3.86]{DLen5}.
Using the uniform coercivity,
this gives
\begin{eqnarray*}
c_\bullet \int_0^T \|u(t)\|^2_V \, dt 
& \le & \RRe \int_0^T  \mathfrak s_t[u(t),u(t)] \, dt \\
& \le & \RRe \int_0^T \langle f(t),u(t) \rangle _{V^* \times V} \, dt  \\
& \leq & \int_0^T \|f(t)\|_{V^*} \, \|u(t)\|_V \,  dt 
\le \|f\|_{L^2(J;V^*)} \, \|u\|_{L^2(J;V)}, 
\end{eqnarray*}
which proves the first inequality in \eqref{e-apriori1}.
Note that 
$\|\cb(t)\|_{V \to V^*}\le c^\bullet$ for all $t \in J$.
Therefore
\begin{eqnarray*}
\|u'\|_{L^2(J;V^*)} 
& \le & \|f\|_{L^2(J;V^*)} +\|\cb(\cdot)u(\cdot)\|_{L^2(J;V^*)} \\
& \le & \|f\|_{L^2(J;V^*)} +c^\bullet \|u\|_{L^2(J;V)} 
\le \bigl (1+\frac{c^\bullet}{c_\bullet} \bigr )\|f\|_{L^2(J;V^*)}
\end{eqnarray*}
and the second inequality in \eqref{e-apriori1} follows.
\end{proof}

Adopt the notation and assumptions of Proposition \ref{p-Lions}.
We are interested in the problem for which $r \in (1,\infty) \setminus \{ 2 \} $ 
the map
\begin{equation} \label{e-surjecHilbert}
\frac{\partial}{\partial t} +\cb(\cdot) \colon \mr^r_0(J;V,V^*) \to L^r(J;V^*) 
\end{equation}
is still an isomorphism.

Let $\cj  \colon V \to V^*$ be the duality map obtained by the 
Riesz representation theorem.
The defining relation is
\begin{equation} \label{e-dualitymap1}
\langle  \cj \psi,\varphi \rangle_{V^* \times V} 
= (\psi,\varphi)_V 
\end{equation}
for all $\psi, \varphi \in V$.
It follows from \eqref{e-dualitymap1} 
that the operator $\cj  \colon V \to V^*$ is the operator 
associated with the sesquilinear form which is the 
scalar product in $V$.
Note that the sesquilinear form is bounded, with constant $1$, and has coercivity constant 
which is also equal to~$1$.
One deduces from Proposition~\ref{p-Lions}
that for all $f \in L^2(J;V^*)$
the equation
\[
u' +\cj  u =f
\]
admits exactly one solution $u \in \mr^2_0(J;V,V^*)$.
Consequently, the operator $\cj $ satisfies maximal parabolic regularity on $V^*$.
Therefore we can apply Corollary~\ref{c-extrap} with $\ck = \cj$.
It follows for the operator family $ \{ \cb(t) \} _{t \in J}$ in Proposition \ref{p-Lions} that 
there is an open interval $I \ni 2$ 
such that the map in \eqref{e-surjecHilbert} is still an isomorphism for all $r\in I$. 
Using Lemma \ref{lemma-t-intgeropo} and the Sneiberg theorem
we also prove a quantitative result on $I$.
The main theorem of this section is the following.

\begin{thm} \label{thm-e-Hilbert}
Adopt the notation and assumptions of Propositions~\ref{p-Lions}.
Let $r_0 \in (2,\infty)$, $r_1 \in (1,2)$
and define $\theta \in (0,1)$ such that 
$\frac{1}{2}=\frac{1-\theta}{r_0}+\frac{\theta}{r_1}$.
Set 
\[
C_{\cj} =\max_{j \in \{ 0,1 \} }\|\bigl ( \frac{\partial }{\partial t}
 +\cj  \bigr )^{-1}\|_{L^{r_j}(J;V^*) \to \mr^{r_j}_0(J;V,V^*)}
 .  \]
Let $\tilde \theta \in (0,1)$.
Let $r \in (1,\infty)$ be such that 
$\frac{1}{r}= \frac{1-\tilde\theta}{r_0}+\frac{\tilde\theta}{r_1}$.
Then one has the following.
\begin{tabel}
\item  \label{thm-0001-1}
If
\[
|\theta -\tilde\theta| 
< \frac{\min(\theta, 1-\theta)}{1+( 1+\frac{1+c^\bullet }{c_\bullet})\max(1,c^\bullet) C_{\cj}},
\]
then $\{\cb(t)\}_{t \in J}$
satisfies maximal $L^r(J;V^*)$-regularity.
\item  \label{thm-0001-2}
If
\[
|\theta-\tilde\theta| 
\leq \frac{1}{6} \, \frac{ \min(\theta, 1-\theta)}
                      {1 +2 ( 1+\frac{1+c^\bullet }{c_\bullet})\max(1,c^\bullet) C_{\cj}},
\]
then 
\[
\|\bigl (\frac{\partial}{\partial t} +\mathcal B(\cdot)\bigr )^{-1}
\|_{L^r(J;V^*) \to \mr^r_0(J;V,V^*)\,\widetilde{\;}} 
\le 8 \frac{1 + c_\bullet + c^\bullet}{c_\bullet} ,
\]
where the norm on $\mr^r_0(J;V,V^*)\,\widetilde{\;}$ is defined using the operator $\cj$.
\end{tabel}
\end{thm}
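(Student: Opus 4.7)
My plan is to apply Sneiberg's extrapolation principle (Theorem~\ref{t-sneib}) to the operator $\mathcal R := \frac{\partial}{\partial t} + \cb(\cdot)$ viewed between appropriate interpolation couples. I set $F_j := \mr^{r_j}_0(J;V,V^*)\,\widetilde{\;}$ (with the tilde norm defined using $\ck = \cj$ as in Definition~\ref{d-tilde}) and $Z_j := L^{r_j}(J;V^*)$ for $j \in \{0,1\}$. By Lemma~\ref{lemma-t-intgeropo}, $[F_0,F_1]_\theta = \mr^2_0(J;V,V^*)\,\widetilde{\;}$, and by Proposition~\ref{p-interLpp}, $[Z_0,Z_1]_\theta = L^2(J;V^*)$, both with \emph{equality} of norms; the same identities hold at any $\tilde\theta \in (0,1)$ with exponent $s$ in place of $2$. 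Insisting on the tilde norms is what preserves Sneiberg's quantitative constants cleanly when translating the conclusion back to the target spaces of the theorem.

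For the endpoint norm $\gamma := \max_j \|\mathcal R\|_{F_j \to Z_j}$, I combine the triangle inequality with $\|\cb(t)\|_{V \to V^*} \leq c^\bullet$ to obtain
\[
\|\mathcal R u\|_{L^{r_j}(J;V^*)} \leq \|u'\|_{L^{r_j}(J;V^*)} + c^\bullet\,\|u\|_{L^{r_j}(J;V)} \leq \max(1,c^\bullet)\,\|u\|_{\mr^{r_j}_0(J;V,V^*)},
\]
and then \eqref{ed-tilde;1} gives $\|u\|_{\mr^{r_j}_0(J;V,V^*)} \leq C_{\cj}^{r_j}\,\|u\|_{F_j}$, yielding $\gamma \leq \max(1,c^\bullet)\,C_{\cj}$. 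For the norm $\beta$ of $\mathcal R^{-1}$ at $\theta$ (where the interpolated exponent is $2$), Proposition~\ref{p-Lions} together with \eqref{ep-Lions;10} gives $\|\mathcal R^{-1}\|_{L^2(J;V^*) \to \mr^2_0(J;V,V^*)} \leq \frac{1+c_\bullet+c^\bullet}{c_\bullet}$. Since $\cj$ is induced by the inner product on $V$, one has $\|\cj\|_{V \to V^*} = 1$, so \eqref{ed-tilde;2} shows the tilde norm on $\mr^2_0(J;V,V^*)$ is dominated by the usual norm, and hence I may take $\beta = 1 + \frac{1+c^\bullet}{c_\bullet}$.

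Plugging the product $\beta\gamma \leq \bigl(1+\frac{1+c^\bullet}{c_\bullet}\bigr)\max(1,c^\bullet)\,C_{\cj}$ into Theorem~\ref{t-sneib}\ref{p-sneib-1} gives surjectivity of $\mathcal R \colon \mr^r_0(J;V,V^*)\,\widetilde{\;} \to L^r(J;V^*)$ on the range prescribed by statement~\ref{thm-0001-1}; uniqueness of solutions in $\mr^r_0$ follows from the standard energy identity $\frac{1}{2}\|u(\tau)\|_H^2 + \RRe\int_0^\tau \mathfrak s_t[u,u]\,dt = 0$ applied to the continuous $H$-valued representative of a homogeneous solution, both terms being non-negative. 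Theorem~\ref{t-sneib}\ref{p-sneib-2} then delivers statement~\ref{thm-0001-2} with the explicit inverse bound $8\beta = 8\frac{1+c_\bullet+c^\bullet}{c_\bullet}$. The most delicate point I anticipate is producing $\beta$ in the correct norm on $F$: any factor from \eqref{ed-tilde;2} larger than $1$ would propagate into the constant and degrade the bound, so the sharp identification $\|\cj\|_{V \to V^*} = 1$ is what makes the quoted constants match exactly.
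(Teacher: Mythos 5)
Your setup, constants, and application of Sneiberg's theorem coincide with the paper's proof: same choice of endpoint couples with the tilde norms, same bound $\gamma\leq\max(1,c^\bullet)\,C_{\cj}$ obtained via \eqref{ed-tilde;1}, and the same $\beta\leq \frac{1+c_\bullet+c^\bullet}{c_\bullet}$ (which you correctly identify as $1+\frac{1+c^\bullet}{c_\bullet}$), with $\|\cj\|_{V\to V^*}=1$ ensuring no spurious factor enters from \eqref{ed-tilde;2}. The derivation of statements \ref{thm-0001-1} (surjectivity plus open mapping) and \ref{thm-0001-2} ($8\beta$-bound) from Theorem~\ref{t-sneib} also matches.

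However, there is a genuine gap in your injectivity argument. You dispose of uniqueness via the energy identity $\frac{1}{2}\|u(\tau)\|_H^2+\RRe\int_0^\tau\mathfrak s_t[u,u]\,dt=0$, ``applied to the continuous $H$-valued representative''. This is fine for $r\geq 2$: then $\mr^r_0(J;V,V^*)\subset\mr^2_0(J;V,V^*)\hookrightarrow C(\overline J;H)$, the pairing $\langle u'(t),u(t)\rangle$ is integrable, and the argument closes. But the theorem also covers $r<2$ (indeed $r_1<2$ and $\tilde\theta$ ranges to both sides of $\theta$), and for $r<2$ a homogeneous solution $u\in\mr^r_0(J;V,V^*)$ need not lie in $C(\overline J;H)$ — by Proposition~\ref{pnonaut410}\ref{pnonaut410-1} one only controls $u$ in $C(\overline J;(V^*,V)_{1-1/r,r})$, and $1-1/r<1/2$ places this space strictly on the $V^*$ side of $H$. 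Moreover $\langle u'(t),u(t)\rangle\in L^{r/2}(J)$ is not integrable a priori when $r<2$, since $u\in L^r(J;V)$ does not give $u\in L^{r'}(J;V)$. So the energy identity is not available, and your uniqueness step does not go through for the sub-$L^2$ exponents. The paper avoids this by invoking Lemma~\ref{l-injective}, which in turn rests on \cite[Proposition~3.2]{ACFP}: a uniqueness statement for accretive $\cb(t)$ that holds for all $r\in(1,\infty)$ without requiring the energy identity. You need either to cite this result (or an equivalent) or to supply an argument that handles $r<2$ directly, e.g.\ a duality or mollification argument that bypasses the missing $C(\overline J;H)$ regularity.
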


For the proof, we first show that injectivity is preserved. 

\begin{lem} \label{l-injective}
Adopt the notation and assumptions of Propositions~\ref{p-Lions}.
Then the
map 
\[
\frac{\partial}{\partial t} +\cb(\cdot) \colon \mr^r_0(J;V,V^*) \to L^r(J;V^*) 
\]
is injective for all $r \in (1,\infty)$.
\end{lem}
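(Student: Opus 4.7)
The plan is to split the argument according to whether $r \ge 2$ or $r < 2$. For $r \ge 2$, since $J$ is bounded, Hölder's inequality gives continuous embeddings $L^r(J;V) \hookrightarrow L^2(J;V)$ and $L^r(J;V^*) \hookrightarrow L^2(J;V^*)$, hence $\mr^r_0(J;V,V^*) \hookrightarrow \mr^2_0(J;V,V^*)$. Thus any $u \in \mr^r_0(J;V,V^*)$ satisfying $u' + \cb(\cdot)u = 0$ a.e.\ together with $u(0)=0$ already lies in $\mr^2_0(J;V,V^*)$, and the uniqueness assertion of Proposition~\ref{p-Lions} applied with $f=0$ immediately forces $u=0$.

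For $r \in (1,2)$ I would argue by duality. The goal is to prove that $\int_0^T \langle \phi(t), u(t)\rangle_{V^*\times V}\,dt = 0$ for every $\phi$ in a dense subspace of $L^{r'}(J;V^*)$, for instance $\phi \in C_c^\infty(J;V) \subset L^2(J;V^*)$. Given such a $\phi$, Proposition~\ref{p-Lions} applied to the time-reversed adjoint sesquilinear forms $\mathfrak s^*_{T-\cdot}$ (which inherit the same coercivity and continuity constants) produces a unique $v \in \mr^2_0(J;V,V^*)$ solving the backward problem $-v'(t) + \cb(t)^* v(t) = \phi(t)$ a.e.\ with $v(T) = 0$. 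Inserting this expression for $\phi$ into the pairing, and then using the defining identity $\langle \cb(t)^* v, u\rangle = \overline{\langle \cb(t) u, v\rangle}$ of the adjoint form together with $\cb(\cdot)u = -u'$, one reduces $\int_0^T \langle\phi,u\rangle\,dt$ to a combination of the pairings $\int\langle v', u\rangle\,dt$ and $\int \overline{\langle u', v\rangle}\,dt$. The classical integration-by-parts identity
\[
\int_0^T \Bigl(\langle u'(t),v(t)\rangle + \overline{\langle v'(t), u(t)\rangle}\Bigr)\,dt = (u(T),v(T))_H - (u(0),v(0))_H
\]
then evaluates this combination to $0$ by virtue of $u(0)=0$ and $v(T)=0$, and density of $C_c^\infty(J;V)$ in $L^{r'}(J;V^*)$ yields $u=0$ in $L^r(J;V)$.

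The principal technical hurdle is the integration-by-parts step in the low-regularity regime $r<2$: because $u$ need not belong to $L^2(J;V)$, the integrand $\langle v'(t), u(t)\rangle$ is not controlled directly by Hölder's inequality (this would require $u \in L^{r'}(J;V)$ with $r'>2$, which is not guaranteed). I would resolve this by approximating $u$ in $\mr^r_0(J;V,V^*)$ by temporal mollifications of the extension of $u$ by $0$ across $t=0$; the zero initial datum $u(0)=0$ is precisely what permits such an extension without losing the $W^{1,r}(J;V^*)$-regularity. For the smooth approximants the pointwise IBP identity against the $\mr^2_0$-regular $v$ holds trivially, and one passes to the limit, with the vanishing boundary contributions at $t=0$ and $t=T$ secured by the conditions $u(0)=0$ and $v(T)=0$ respectively.
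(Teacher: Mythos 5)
Your proposal takes a genuinely different route from the paper. The paper's proof is a one-liner: it observes that each $\cb(t)$ is accretive as an operator in $V^*$ and cites the abstract uniqueness result \cite[Proposition~3.2]{ACFP}, which derives the injectivity for \emph{all} $r\in(1,\infty)$ from a single energy inequality of the form $\frac{d}{dt}\|u(t)\|_{V^*}^2 = 2\RRe (u'(t),u(t))_{V^*} \le 0$. This pairs $u'\in L^r(J;V^*)$ against $u\in C(\overline J;V^*)\subset L^{r'}(J;V^*)$, so the integral is automatically absolutely convergent regardless of how small $r$ is.

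Your argument for $r\ge 2$ is correct and clean: $J$ bounded gives $\mr^r_0\hookrightarrow\mr^2_0$, and the uniqueness in Proposition~\ref{p-Lions} finishes it. However, the case $r\in(1,2)$ — which is the one that actually requires a new idea — has a genuine gap in the duality step. The integration-by-parts identity you invoke pairs $u\in\mr^r_0$ (with $r<2$) against the backward solution $v$, which is only in the $\mr^2$-class. Neither $\int_0^T\langle u'(t),v(t)\rangle\,dt$ nor $\int_0^T\langle v'(t),u(t)\rangle\,dt$ is absolutely convergent under these regularities: the first would need $v\in L^{r'}(J;V)$ with $r'>2$ while $v\in L^2(J;V)$ only; the second would need $u\in L^2(J;V)$ while $u\in L^r(J;V)$ with $r<2$ only. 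Your mollification fix does arrange $u_\varepsilon(0)=0$ exactly and the approximants are smooth, but since $\cb(\cdot)$ is merely \emph{measurable} in time the mollification does not commute with $\cb(\cdot)$: writing $g_\varepsilon := u_\varepsilon'+\cb(\cdot)u_\varepsilon$, the identity becomes $\int_0^T\langle\phi,u_\varepsilon\rangle\,dt = \int_0^T\overline{\langle g_\varepsilon,v\rangle}\,dt$, and although $g_\varepsilon\to 0$ in $L^r(J;V^*)$, controlling $\int\langle g_\varepsilon,v\rangle\,dt$ again requires $v\in L^{r'}(J;V)$. Upgrading the backward solution $v$ from $L^2(J;V)$ to $L^{r'}(J;V)$ is precisely maximal $L^{r'}$-regularity for the adjoint family, i.e.\ the Banach-space dual of the very injectivity statement under proof — so the argument becomes circular. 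This is not a presentational detail that can be patched: the transposition method inherently needs the test function $v$ to lie in the dual regularity class to $u$, and for $r<2$ Lions' theorem does not deliver that. The accretivity route used in the paper avoids the issue entirely because it never leaves the single space $V^*$.
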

\begin{proof}
For all $t \in J$ the operator $\cb(t)$ is accretive on $V^*$.
Then the claim follows from \cite[Proposition~3.2]{ACFP}.
\end{proof}

\begin{proof}[{\bf Proof of Theorem~\ref{thm-e-Hilbert}}]
`\ref{thm-0001-1}'.
We apply Theorem~\ref{t-sneib} of Sneiberg with the spaces
$F_1=\mr^{r_0}_0(J;V,V^*)\,\widetilde{\;}$,
$F_2=\mr^{r_1}_0(J;V,V^*)\,\widetilde{\;}$,
$Z_1=L^{r_0}(J;V^*)$ and $Z_2=L^{r_1}(J;V^*)$. 
Then $[F_1,F_2]_\theta = \mr^2_0(J;V,V^*)\,\widetilde{\;}\;$
and $[Z_1,Z_2]_\theta = L^2(J;V^*)$, with equality of norms by 
Lemma~\ref{lemma-t-intgeropo} and Proposition~\ref{p-interLpp}.
Moreover, $\frac{\partial }{\partial t} + \cb(\cdot)$
is an isomorphism from $\mr^2_0(J;V,V^*)$ onto 
$L^2(J;V^*)$ by Proposition~\ref{p-Lions}.

We first estimate 
$\beta = \|\bigl (\frac{\partial }{\partial t} + \cb(\cdot)\bigr)^{-1}
\|_{[Z_1,Z_2]_\theta \to [F_1,F_2]_\theta}$.
Let  $u \in [F_1,F_2]_\theta$.
Then 
\begin{eqnarray*}
\|u\|_{[F_1,F_2]_\theta}
& = & \|u\|_{\mr^2_0(J;V,V^*)\,\widetilde{\;} }
\leq \|u\|_{\mr^2_0(J;V,V^*)}
\leq \Big( \frac{1+c_\bullet + c^\bullet }{c_\bullet} \Big) 
    \|\Big( \frac{\partial }{\partial t} + \cb(\cdot) \Big) u\|_{ L^2(J;V^*) } ,
\end{eqnarray*}
where we used (\ref{ed-tilde;2}) and (\ref{ep-Lions;10}) in the two inequalities.
So 
$\beta \leq \frac{1+c_\bullet + c^\bullet }{c_\bullet}$.

Next we estimate 
\[
\gamma:=\max_{j \in \{ 1,2 \} }
   \|\frac{\partial }{\partial t} + \cb(\cdot)\|
_{\mr^{r_j}_0(J;V,V^*) \,\widetilde{\;} \, \to L^{r_j}(J;V^*)} ,
\] 
which is the second input for the calculation of the admissible interpolation parameters in
Theorem~\ref{t-sneib}.
Let $j \in \{ 1,2 \} $ and $u \in \mr^{r_j}_0(J;V,V^*)$.
Then 
\[
\| \Big( \frac{\partial }{\partial t} + \cb(\cdot) \Big) u\|_{L^{r_j}(J;V^*)}
\leq (1 \vee c^\bullet) \, \|u\|_{\mr^{r_j}_0(J;V,V^*)}  
\leq C^{r_j}_\cj \, \|u\|_{ \mr^{r_j}_0(J;V,V^*)\,\widetilde{\;} }
,  \]
where we used (\ref{ed-tilde;1}) in the second step.
Hence
\[
\gamma 
\leq \max(1,c^\bullet) \, C_{\cj}.
\]
The operator 
\[
\frac{\partial}{\partial t} +\cb(\cdot) \colon \mr^r_0(J;V,V^*) \to L^r(J;V^*) 
\]
is obviously continuous, and, by
Lemma \ref{l-injective}, it is injective.
Moreover, it is surjective by Theorem~\ref{t-sneib}\ref{p-sneib-1} for the claimed interpolation parameters.
Then Theorem~\ref{thm-e-Hilbert} follows from the open
mapping theorem.

`\ref{thm-0001-2}'.
This is proved analogously by using Theorem~\ref{t-sneib}\ref{p-sneib-2}.
\end{proof}

\begin{rem}\label{r-Pich}
It would be very interesting to get upper and lower bounds on the
exponents $r$, dependent of the constants $c_\bullet$ and $c^\bullet$.
The difficulty is to get explicit estimates of $C_{\cj}^r$ dependent on $r$. 
For results related to this problem, we refer to
\cite{CannarsaVespri}.
\end{rem}

We continue with some motivation for Theorem~\ref{thm-e-Hilbert}.
In Proposition~\ref{p-Lions} one has $f \in L^2(J;V^*)$ and 
therefore $u \in \mr^2(J;V,V^*) \subset C(J;H)$.
So $u$ is continuous from $J$ into~$H$.
In many settings, it
turns out that $f \in L^r(J;V^*)$ for some $r >2$, because in 
many applications like spatial discretization, one is confronted with step functions. 
Then Theorem~\ref{thm-e-Hilbert} gives that $u \in \mr^r_0(J;V,V^*)$.
We show in the next proposition that then the function 
$u \colon J \to H$ is H\"older continuous.
Moreover, the orbits are relatively
 compact in $H$, if the embedding $V \hookrightarrow V^*$ is compact.
This additional regularity can be essential for studying related semi- and quasilinear problems
(cf.\ Section~\ref{Snonaut00}), large-time asymptotic behaviour (cf.\ \cite{Schn}), or for
optimal control problems with tracking type objective functional (cf.\ \cite{HMRR}).

\begin{prop} \label{pnonaut410}
Let $V$ and $H$ be Hilbert spaces with $V$ densely and continuously embedded into~$H$.
Then one has the following.
\begin{tabel}
\item \label{pnonaut410-1}
If $r \in (1,\infty)$ and $\theta \in (0,1-\frac{1}{r})$, then
$\mr^r_0(J;V,V^*) \hookrightarrow C(\overline J;(V^*,V)_{1-\frac{1}{r},r})$.
\item \label{pnonaut410-2}
If $r \in (1,\infty)$ and $\theta \in (0,1-\frac{1}{r})$, then
$\mr^r_0(J;V,V^*) \hookrightarrow C^\alpha(J;(V^*,V)_{\theta,1})$,
where $\alpha = 1-\frac{1}{r}-\theta$.
\item \label{pnonaut410-3}
If $r \in (2,\infty)$, then $\mr^r_0(J;V,V^*)\hookrightarrow C^\alpha(J;H)$ for all 
$\alpha \in (0,\frac{1}{2} - \frac{1}{r})$.
\item \label{pnonaut410-4}
If $r \in (2,\infty)$ and $V$ embeds compactly into $V^*$, then 
$\mr^r_0(J;V,V^*) \hookrightarrow C(\overline J;H)$ and the embedding is compact.
\end{tabel}
\end{prop}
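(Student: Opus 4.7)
The plan is to derive parts \ref{pnonaut410-1}--\ref{pnonaut410-4} in order, with \ref{pnonaut410-1} serving as the trace input and the remaining parts bootstrapped from it. Part \ref{pnonaut410-1} is the classical vector-valued trace theorem for maximal regularity spaces: every $u \in W^{1,r}(J;V^*) \cap L^r(J;V)$ admits a continuous representative $\overline{J} \to (V^*,V)_{1-\frac{1}{r}, r}$, and the embedding is continuous. This is standard and can be cited, e.g., from Amann, \emph{Linear and Quasilinear Parabolic Problems}, Chapter III, or from Lunardi's book on interpolation theory.

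For \ref{pnonaut410-2}, I would interpolate between two separate bounds on $u(t) - u(s)$. The Sobolev embedding $W^{1,r}(J;V^*) \hookrightarrow C^{1-1/r}(\overline{J};V^*)$ yields $\|u(t)-u(s)\|_{V^*} \le |t-s|^{1-1/r}\,\|u'\|_{L^r(J;V^*)}$ by H\"older's inequality, while \ref{pnonaut410-1} gives that $u(t)-u(s)$ is bounded uniformly in $(V^*,V)_{1-\frac{1}{r},r}$. Applying the standard real-interpolation convexity inequality
\[
\|x\|_{(V^*,V)_{\theta,1}}
 \le C\,\|x\|_{V^*}^{1-\eta}\,\|x\|_{(V^*,V)_{1-\frac{1}{r},r}}^{\eta},
\qquad \eta = \theta/(1-\tfrac{1}{r}),
\]
which in turn follows from the reiteration identity $\bigl(V^*,\,(V^*,V)_{1-\frac{1}{r},r}\bigr)_{\eta,1} = (V^*,V)_{\eta(1-\frac{1}{r}),1}$ for $\eta \in (0,1)$, to $x = u(t)-u(s)$ produces the stated H\"older exponent $(1-\tfrac{1}{r})(1-\eta) = 1 - \tfrac{1}{r} - \theta$. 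Part \ref{pnonaut410-3} is then immediate via the Gelfand triple identity $H = (V^*,V)_{1/2,2}$: given $\alpha \in (0, \tfrac{1}{2} - \tfrac{1}{r})$, set $\theta := 1 - \tfrac{1}{r} - \alpha$, which lies in $(\tfrac{1}{2}, 1-\tfrac{1}{r})$, so that $(V^*,V)_{\theta,1} \hookrightarrow (V^*,V)_{1/2,2} = H$ by monotonicity of real interpolation spaces, and chain with \ref{pnonaut410-2}.

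For \ref{pnonaut410-4}, I would invoke Arzel\`a--Ascoli. When $V \hookrightarrow V^*$ is compact, the trace space embedding $(V^*,V)_{1-\frac{1}{r},r} \hookrightarrow H$ is compact, either by Cwikel's theorem on real interpolation of compact operators, or directly from the $K$-functional definition combined with the compactness of $V \hookrightarrow V^*$. Hence for any bounded set $B \subset \mr^r_0(J;V,V^*)$, part \ref{pnonaut410-1} guarantees that $\{u(t) : u \in B\}$ is relatively compact in $H$ for each $t \in \overline J$, and \ref{pnonaut410-3} furnishes uniform H\"older equicontinuity of $B$ into $H$; Arzel\`a--Ascoli then yields relative compactness of $B$ in $C(\overline J;H)$. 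The most delicate bookkeeping is in \ref{pnonaut410-2}, where the H\"older exponent rests on pinpointing the correct indices in the reiteration identity for real interpolation; once that is locked in, \ref{pnonaut410-3} and \ref{pnonaut410-4} follow mechanically.
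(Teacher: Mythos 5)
Your proposal is correct and follows essentially the same route as the paper: cite the trace theorem for (a), use $(V^*,V)_{\theta,1}\hookrightarrow(V^*,V)_{1/2,2}=H$ for (c), and Arzel\`a--Ascoli for (d). The only genuine difference is in (b): the paper simply cites Amann (or Disser--ter Elst--Rehberg) for the H\"older embedding, whereas you reconstruct it by interpolating the Lipschitz-type bound $\|u(t)-u(s)\|_{V^*}\le|t-s|^{1-1/r}\|u'\|_{L^r(J;V^*)}$ against the $C(\overline J;(V^*,V)_{1-\frac{1}{r},r})$ bound from (a), via the convexity inequality $\|x\|_{(V^*,V)_{\theta,1}}\le C\,\|x\|_{V^*}^{1-\eta}\,\|x\|_{(V^*,V)_{1-\frac{1}{r},r}}^{\eta}$ with $\eta=\theta/(1-\tfrac1r)$; this inequality is correct (it follows from $K(t,x)\le\min(\|x\|_{V^*},t\|x\|_{(V^*,V)_{1-\frac1r,r}})$, or from reiteration), and the exponent bookkeeping $(1-\tfrac1r)(1-\eta)=1-\tfrac1r-\theta$ is right. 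In (d) you use the compact embedding of the trace space $(V^*,V)_{1-\frac1r,r}$ into $H$ for pointwise precompactness and (c) for equicontinuity, while the paper derives both from the single compact embedding $(V^*,V)_{\theta,1}\hookrightarrow H$ together with the $C^\alpha$ bound; both are valid Ascoli arguments. One small caveat: invoking \emph{Cwikel's theorem} in (d) is a slight overreach --- what you actually need is the elementary fact that if $V\hookrightarrow V^*$ compactly then $(V^*,V)_{\theta_1,q_1}\hookrightarrow(V^*,V)_{\theta_0,q_0}$ compactly whenever $\theta_1>\theta_0$, which is the statement in [BL, Section~3.8] the paper cites and is more direct than Cwikel.
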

\begin{proof}
`\ref{pnonaut410-1}'. 
See \cite[Theorem~III.4.10.2]{Ama2}.

`\ref{pnonaut410-2}'. 
See \cite[Theorem~3]{Ama3}, or \cite[Lemma~2.11(b)]{DisserterElstRehberg} for an elementary proof. 

`\ref{pnonaut410-3}'. 
Set $\theta = 1 - \frac{1}{r} - \alpha$. 
Then $\theta \in (\frac{1}{2},1)$.
It follows from Statement~\ref{pnonaut410-2} that 
$\mr^r_0(J;V,V^*) \hookrightarrow C^\alpha(J;(V^*,V)_{\theta,1})$.
Moreover, $(V^*,V)_{\theta,1} \hookrightarrow (V^*,V)_{\frac{1}{2},2} = [V^*,V]_\frac{1}{2} = H$.
Hence 
$\mr^r_0(J;V,V^*)\hookrightarrow C^\alpha(J;H)$.

`\ref{pnonaut410-4}'. 
Choose $\alpha \in (0,\frac{1}{2} - \frac{1}{r})$.
Set $\theta = 1 - \frac{1}{r} - \alpha$ as in the proof of Statement~\ref{pnonaut410-3}.
Since $V$ embeds compactly into $V^*$, 
the embedding $(V^*,V)_{\theta,1} \hookrightarrow (V^*,V)_{\frac{1}{2},2}$ is 
compact by \cite[Section~3.8]{BL}.
Then the statement follows by the Ascoli theorem for vector-valued functions,
see \cite[Section~III.3]{Lang2}.
\end{proof}

\section{Elliptic differential operators} \label{Snonaut5}

In the sequel we will apply the above results for the derivation of regularity for 
non-autonomous parabolic differential operators on Sobolev spaces.
We first introduce the underlying elliptic setting.

Throughout the rest of this paper we fix a bounded open set 
$\Omega \subset \R^d$, where $d \geq 2$.
Let $\fD$ be a 
closed subset of the boundary
$\partial \Omega$ (to be understood as the Dirichlet boundary part).
Regarding our geometric setting, we suppose the following general conditions.

\begin{assu} \label{assu-general}
\mbox{}
 \begin{tabeleq}
 \item \label{assu-general:i} 
For every $x \in \overline{\partial \Omega \setminus \fD}$ there
exists an open neighbourhood $\mathfrak U_x$ of $x$ and a bi-Lipschitz map
$\phi_x$ from $\mathfrak U_x$ onto the cube $K :=
{({-1},1)}^d$, such that the following three conditions are satisfied:
\begin{align*}
  \phi_x(x) &= 0, \nonumber \\
  \phi_x(\mathfrak U_x \cap \Omega) &= \{ x \in K : x_d < 0 \} ,  \\
  \phi_x(\mathfrak U_x \cap \partial \Omega) 
&= \{ x \in K : x_d = 0 \}.
\end{align*}
\item   \label{assu-general:ii}
The set $\fD$
satisfies the \emph{Ahlfors--David condition}, that is 
there are $c_0, c_1 > 0$ and $r_{AD} > 0$, such that 
\begin{equation} \label{e-ahlf}
  c_0 r^{d-1} \le \mathcal H_{d-1} (\fD \cap B(x,r) ) \le c_1
r^{d-1}
\end{equation}
for all $x \in \fD$ and $r \in {(0,r_{AD}]}$,
where $\mathcal H_{d-1}$ denotes the
$(d-1)$-dimensional Hausdorff measure and $B(x,r)$ denotes the ball
with centre $x$ and radius $r$. 
 \item   \label{assu-general:iii}
The set $\Omega$ is a $d$-set in the sense of Jonsson--Wallin \cite[Chapter~II]{JW}.
This condition is, however, {\bf not} needed if the coefficient function $\mu$ or 
$(\mu_t)_{t \in J}$ is hermitian, which we introduce below.
That is, if $\mu(x)$ is symmetric or $\mu_t(x)$ is 
symmetric for all $x \in \Omega$ and $t \in J$, then we do not need that 
$\Omega$ is a $d$-set.
\end{tabeleq}
\end{assu}

\begin{rem} \label{r-surfmeas}
\mbox{}
 \begin{tabel}
\item  \label{r-surfmeas-3}
We emphasize that the cases 
$\fD = \partial \Omega$ or $\fD = \emptyset$ are allowed.
 \item  \label{r-surfmeas-1}
Condition~\eqref{e-ahlf} means that $\fD$ is a $(d-1)$-set in the sense 
of Jonsson--Wallin \cite[Chapter~II]{JW}.
 \item \label{r-surfmeas:ii} 
On the set 
$\partial \Omega \cap \bigl( \bigcup_{x \in \overline{\partial \Omega \setminus \fD}}
       \, \mathfrak U_x \bigr)$ the measure $\mathcal H_{d-1}$ is equal to the surface measure
$\sigma$, which can be constructed via the bi-Lipschitz charts
$\phi_x$ around these boundary points, see
\cite[Section~3.3.4~C]{EvG} or \cite[Section~3]{HaR2}.
In
particular, \eqref{e-ahlf} assures that 
$\sigma \bigl(\fD \cap \bigl( \bigcup_{x \in \overline{\partial \Omega \setminus \fD}}
    \, \mathfrak U_x \bigr) \bigr) > 0$ if $\fD \neq \emptyset$ and 
$\fD \neq \partial \Omega$.
\item \label{r-surfmeas-5}
Condition~\ref{assu-general:iii} excludes \emph{outward} cusps, 
but \emph{inward} cusps are allowed.
This condition is only used in Proposition~\ref{p-3}.
\end{tabel}
\end{rem}

\begin{defn} \label{d-1}
For all $q \in [1,\infty)$ we define the space 
$W^{1,q}_\fD(\Omega)$ as the completion
of
\[
C^\infty_\fD(\Omega) 
:= \{ \psi|_\Omega : \psi \in C_c^\infty(\R^d) \mbox{ and }
     \supp(\psi) \cap \fD = \emptyset \}
\]
with respect to the (standard) norm $\psi \mapsto \bigl( \int_\Omega |\nabla \psi|^q +
|\psi|^q \bigr)^{1/q}$.
If $q \in (1,\infty)$ then the (anti-) dual of
this space will be denoted by $W^{-1,q'}_\fD(\Omega)$, where $1/q + 1/q' = 1$.
Here, the dual is to be understood with respect to the extended $L^2$ scalar
product, or, in other words, $W^{-1,q'}_\fD(\Omega)$ is the space of continuous
antilinear functionals on $W^{1,q}_\fD(\Omega)$.

Since the domain $\Omega$ is kept fixed, we omit the
symbol `$\Omega$' in the sequel if no confusion is possible.
For example, we write $W^{1,q}_\fD$ instead of
 $W^{1,q}_\fD(\Omega)$. 
\end{defn}

The spaces $W^{1,q}_\fD$ and the spaces
 $W^{-1,q}_\fD$  interpolate with respect to 
the complex interpolation functor.

\begin{lemma} \label{l-1} 
Adopt Assumption~\ref{assu-general}. 
Let $q_1,q_2 \in (1,\infty)$ and $\theta \in (0,1)$.
Then 
\begin{eqnarray*} 
[W^{1,q_1}_\fD,W^{1,q_2}_\fD]_\theta 
& = &W^{1,q}_\fD \quad \mbox{and}  \\
{} [W^{-1,q_1}_\fD,W^{-1,q_2}_\fD]_\theta 
& = & W^{-1,q}_\fD, 
\end{eqnarray*}
where $\frac{1}{q}=\frac{1-\theta}{q_1} +\frac{\theta}{q_2}$.
\end{lemma}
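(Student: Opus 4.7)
The plan is to establish the $W^{1,q}_\fD$-identity by a retraction–coretraction argument and then deduce the $W^{-1,q}_\fD$-identity via duality.

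First I would invoke a universal extension operator $E \colon W^{1,q}_\fD \to W^{1,q}(\R^d)$ that is simultaneously bounded for every $q \in (1,\infty)$, together with a continuous restriction $R \colon W^{1,q}(\R^d) \to W^{1,q}_\fD$ satisfying $RE = I$. Under Assumption~\ref{assu-general}, such a simultaneous extension is available in the literature (Rychkov-type or Brewster--Mitrea--Mitrea--Mitrea constructions exploiting the bi-Lipschitz charts near $\overline{\partial\Omega\setminus\fD}$ together with the Ahlfors--David regularity of $\fD$, which guarantees that the vanishing-on-$\fD$ condition is preserved under the reflection). The operator $R$ is the obvious pullback: restrict to $\Omega$ and observe that functions in $C^\infty_c(\R^d)$ whose support avoids $\fD$ after restriction land in $C^\infty_\fD(\Omega)$. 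Thus $E$ is a coretraction and $R$ a retraction for the interpolation couple $(W^{1,q_1}_\fD, W^{1,q_2}_\fD)$.

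Combining this with the classical identity
\begin{equation*}
[W^{1,q_1}(\R^d), W^{1,q_2}(\R^d)]_\theta = W^{1,q}(\R^d),
\end{equation*}
which follows from the Bessel-potential characterisation of $W^{1,q}(\R^d)$ and a Mihlin multiplier argument, the standard retraction--coretraction lemma (Triebel, \S1.2.4, or Bergh--Löfström, Theorem~6.4.2) yields
\begin{equation*}
[W^{1,q_1}_\fD, W^{1,q_2}_\fD]_\theta = W^{1,q}_\fD.
\end{equation*}

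For the negative-exponent identity I would use duality. Since $W^{1,q_i'}_\fD$ is a closed subspace of the reflexive space $W^{1,q_i'}(\Omega)$, it is itself reflexive, and by definition $W^{-1,q_i}_\fD = (W^{1,q_i'}_\fD)^*$. The duality theorem for complex interpolation of reflexive couples (Bergh--Löfström, Theorem~4.5.1) gives
\begin{equation*}
[W^{-1,q_1}_\fD, W^{-1,q_2}_\fD]_\theta
= \bigl([W^{1,q_1'}_\fD, W^{1,q_2'}_\fD]_\theta\bigr)^*,
\end{equation*}
and inserting the first identity with $(q_1', q_2')$ in place of $(q_1, q_2)$, together with the elementary relation $\tfrac{1}{q'} = \tfrac{1-\theta}{q_1'} + \tfrac{\theta}{q_2'}$, concludes the argument.

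The main obstacle is the production of the universal extension~$E$: the classical Stein or Calderón extensions require Lipschitz regularity of $\partial\Omega$ and, more importantly, do not respect the vanishing trace on $\fD$. The very weak geometric hypotheses in Assumption~\ref{assu-general}---where $\Omega$ need not be Lipschitz near $\fD$ and $\fD$ is only Ahlfors--David regular---make this step non-trivial and force a reliance on the extension machinery developed in the references cited above.
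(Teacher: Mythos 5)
The retraction step is where your argument breaks down. You define $R$ as ``the obvious pullback: restrict to $\Omega$,'' but restriction from $W^{1,q}(\R^d)$ lands in $W^{1,q}(\Omega)$, \emph{not} in $W^{1,q}_\fD(\Omega)$: an arbitrary element of $W^{1,q}(\R^d)$ has no reason to vanish on $\fD$. Your own observation about $C^\infty_c(\R^d)$ functions whose supports avoid $\fD$ only shows that restriction is well behaved on the \emph{subspace} $W^{1,q}_\fD(\R^d)$ of the full Sobolev space; it does not produce a bounded map $R\colon W^{1,q}(\R^d)\to W^{1,q}_\fD(\Omega)$ with $RE=I$. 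So the retraction--coretraction lemma cannot be applied with the couple $(W^{1,q_1}(\R^d),W^{1,q_2}(\R^d))$ as target. If you instead take the target couple to be $(W^{1,q_1}_\fD(\R^d),W^{1,q_2}_\fD(\R^d))$ (into which the ABHR extension of Remark~\ref{r-fortsetz} genuinely maps, and from which restriction is a bona fide retraction), then you no longer have the classical Bessel-potential/Mihlin interpolation identity available --- that identity holds for the unconstrained spaces, and you are back to the original problem, just posed on $\R^d$.

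What is actually needed, and what you do not supply, is a $q$-universal bounded linear \emph{projection} $P\colon W^{1,q}(\R^d)\to W^{1,q}_\fD(\R^d)$; with such a $P$ one could set $R=(\text{restriction})\circ P$ and close the argument. Constructing $P$ under the weak hypothesis that $\fD$ is merely Ahlfors--David regular is exactly the technical core of \cite[Theorem~3.3]{HJKR}, which rests on the Jonsson--Wallin trace/extension theory for $(d-1)$-sets --- machinery distinct from the Rychkov/Brewster--Mitrea--Mitrea--Mitrea constructions you cite, which address extension across $\partial\Omega$ but not the identification of the kernel of the $\fD$-trace as a complemented subspace. Your duality deduction of the $W^{-1,q}_\fD$-identity from the $W^{1,q}_\fD$-identity is correct (reflexivity and density of $C^\infty_\fD$ in both spaces of the couple supply the hypotheses of the duality theorem), so the gap is confined to the first half.
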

\begin{proof}
See \cite{HJKR} Theorem~3.3 and Corollary~3.4.
\end{proof}

\begin{rem} \label{r-fortsetz}
By \cite[Lemma~3.2]{ABHR} there exists an extension operator 
$\mathcal E \colon W^{1,q}_\fD(\Omega) \to W^{1,q}_\fD(\R^d)$,
which is universal in $q \in [1,\infty)$.
Therefore one has the usual Sobolev embeddings available, including compactness.
\end{rem}

We now turn to the definition of the elliptic divergence form operators that
will be investigated.
 
For all $c_\bullet,c^\bullet > 0$ we denote by $\ce(c_\bullet,c^\bullet)$ the 
set of all measurable $\mu \colon \Omega \to \Ri^{d \times d}$ such that 
\[
\RRe (\mu(x) \xi,\xi)_{\Ci^d} \ge c_\bullet |\xi|^2
\quad \mbox{and} \quad
\|\mu(x)\|_{\cl(\Ci^d)} \leq c^\bullet
\]
for all $\xi \in \C^d$ and almost all $x \in \Omega$.
Moreover, define
\[
\ce = \bigcup_{c_\bullet,c^\bullet > 0} \ce(c_\bullet,c^\bullet)
,  \]
the set of all elliptic coefficient functions.

\begin{defn} \label{d-2}
For all $q \in (1,\infty)$ and $\mu \in \ce$ define the operator 
$\ca_q = \ca_q(\mu) \colon  W^{1,q}_\fD \to W^{-1,q}_\fD$ 
by
\[
 \langle \ca_q(\mu) \psi, \varphi\rangle_{W^{-1,q}_\fD \times W^{1,q'}_\fD} 
:= \int_\Omega \mu \nabla \psi \cdot \overline {\nabla \varphi} ,
\]
where $\psi \in W^{1,q}_\fD$ and $\varphi \in W^{1,q'}_\fD$.
Then 
\[
\|\ca_q\|_{W^{1,q}_\fD \to W^{-1,q}_\fD} 
\le \|\mu\|_{L^\infty(\Omega ;\cl(\Ci^d))}
\]
for all $q \in (1,\infty)$ by H\"older's inequality.

Moreover, for all $q \in [2,\infty)$ define the operator $\widetilde \ca_q = \widetilde \ca_q(\mu)$ in 
$W^{-1,q}_\fD$ by 
\[
\dom(\widetilde \ca_q)
= \{ \psi \in W^{-1,q}_\fD \cap W^{1,2}_\fD : 
        \ca_2 \psi \in W^{-1,q}_\fD \}
\]
and $\widetilde \ca_q = \ca_2|_{\dom(\widetilde \ca_q)}$.

Define the sesquilinear form $\gots = \gots_\mu \colon W^{1,2}_\fD \times W^{1,2}_\fD \to \Ci$ by
\[
\gots(\psi,\varphi) 
= \int_\Omega \mu \nabla \psi \cdot \overline {\nabla \varphi} 
 .  \]
Then $\gots$ is coercive and continuous.
Let $A = A(\mu)$ be the $m$-sectorial operator in $L^2$ associated to~$\gots$.
So $\dom A = \{ \psi \in W^{1,2}_\fD : \ca_2 \psi \in L^2 \} $
and $A = \ca_2|_{\dom A}$.
Let $S$ be the semigroup generated by $-A$.
Then $S$ extends consistently to a contraction semigroup $S^{(q)}$ on $L^q$ for 
all $q \in [1,\infty)$.
Moreover, $S^{(q)}$ is a holomorphic semigroup on $L^q$ for 
all $q \in [1,\infty)$ by \cite{ERe1} Theorem~3.1.
For all $q \in [1,\infty)$ we denote by $-A_q = - A_q(\mu)$ the generator of $S^{(q)}$.
\end{defn}

Clearly if $p,q \in (1,\infty)$ and $p \leq q$, then 
$\ca_q = \ca_p|_{W^{1,q}_\fD}$.
Thus the graph of $\ca_p$ is an extension of the graph of $\ca_q$.
As a consequence, $W^{1,q}_\fD \subset \dom \widetilde \ca_q$ for all $q \in [2,\infty)$.

Similarly, the graph of $A_p$ is an extension of the graph of $A_q$ if
$p,q \in [1,\infty)$ and $p \leq q$.

\section{Maximal parabolic regularity for differential operators} \label{Snonaut6}

In this section we prove that there exists a $q_0 \in (1,2)$ such that 
the operator $\ca_q + I$ or $\widetilde \ca_q + I$ satisfies maximal parabolic 
regularity on the space $W^{1,q}_\fD$ for all $q \in (q_0,\infty)$.
A key tool is a recent solution of the Kato square root problem.
In order to determine $q_0$ we need a definition.

\begin{defn} \label{d-isoindex}
Let $\mu \in \ce$.
We call a number $q \in (1,\infty)$ an {\bf isomorphism index} for the coefficient
 function $\mu$ if 
\[
\ca_q +I \colon  W^{1,q}_\fD \to  W^{-1,q}_\fD 
\]
is a topological isomorphism.
We denote by $\fI_\mu$ the set of isomorphism indices for $\mu$.
Although the set $\fI_\mu$ also depends on the set 
$\fD$, we suppress the dependence of $\fD$ in the notation.
\end{defn}

If $q \in (1,\infty)$ and $\mu \in \ce$, then by duality one obviously has 
$q \in \fI_\mu$ if and only if $q' \in \fI_{\mu^T}$.
This allows to concentrate to all $q \in [2,\infty)$.

\begin{lemma} \label{lnonauto302}
Let $\mu \in \ce$ and $q \in (2,\infty)$.
Then one has the following.
\begin{tabel}
\item \label{lnonauto302-1}
$q \in \fI_\mu$ if and only if the operator $\ca_q + I$ is surjective.
\item \label{lnonauto302-2}
If $q \in \fI_\mu$, then $\dom \widetilde \ca_q = W^{1,q}_\fD$.
\end{tabel}
\end{lemma}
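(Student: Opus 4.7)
My plan is to handle the two statements in turn, using that the Lax--Milgram theorem makes $\ca_2 + I$ an isomorphism from $W^{1,2}_\fD$ onto $W^{-1,2}_\fD$ (the form $\gots_\mu + (\cdot,\cdot)_{L^2}$ is coercive and bounded on $W^{1,2}_\fD$), and that for $q \ge 2$ the domain $\Omega$ being bounded yields the continuous embeddings
\[
W^{1,q}_\fD \hookrightarrow W^{1,2}_\fD
\qquad \text{and} \qquad
W^{-1,q}_\fD \hookrightarrow W^{-1,2}_\fD,
\]
the second one following from the first by duality (since $q \ge 2$ forces $q' \le 2$).

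For Statement~\ref{lnonauto302-1}, only the `if' direction needs a proof. The key observation is that injectivity of $\ca_q + I$ is automatic: if $\psi \in W^{1,q}_\fD$ satisfies $(\ca_q + I)\psi = 0$ in $W^{-1,q}_\fD$, then by the embeddings above $\psi \in W^{1,2}_\fD$ and $(\ca_2 + I)\psi = 0$ in $W^{-1,2}_\fD$ (using that $\ca_q$ is just the restriction of $\ca_2$ to $W^{1,q}_\fD$), whence $\psi = 0$ by the Lax--Milgram injectivity at $q=2$. Combined with the assumed surjectivity and the fact that $\ca_q + I \in \cl(W^{1,q}_\fD;W^{-1,q}_\fD)$, the open mapping theorem upgrades this to a topological isomorphism.

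For Statement~\ref{lnonauto302-2}, the inclusion $W^{1,q}_\fD \subset \dom \widetilde \ca_q$ is already noted at the end of Section~\ref{Snonaut5}. For the reverse inclusion, pick $\psi \in \dom \widetilde \ca_q$, so by definition $\psi \in W^{-1,q}_\fD \cap W^{1,2}_\fD$ and $\ca_2 \psi \in W^{-1,q}_\fD$. Then the right-hand side $\ca_2 \psi + \psi$ lies in $W^{-1,q}_\fD$, and since $\ca_q + I$ is an isomorphism by hypothesis, there is a unique $v \in W^{1,q}_\fD$ with $(\ca_q + I)v = \ca_2 \psi + \psi$ in $W^{-1,q}_\fD$. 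Embedding this identity into $W^{-1,2}_\fD$ and again using that $\ca_q$ is the restriction of $\ca_2$, this reads $(\ca_2 + I)(v - \psi) = 0$ in $W^{-1,2}_\fD$, so $v = \psi$ by Lax--Milgram injectivity, giving $\psi = v \in W^{1,q}_\fD$.

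I do not expect any real obstacle; the entire argument is bookkeeping of embeddings plus the compatibility $\ca_q = \ca_2|_{W^{1,q}_\fD}$. The only point that requires a moment's attention is verifying that the compatibility survives passage between the dual spaces $W^{-1,q}_\fD$ and $W^{-1,2}_\fD$, which it does because both dualities are taken with respect to the same extended $L^2$ pairing (as recorded in Definition~\ref{d-1}).
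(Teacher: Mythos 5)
Your proposal is correct and follows essentially the same route as the paper: for part~(a), you reduce injectivity of $\ca_q + I$ to the $q=2$ case via the embeddings $W^{1,q}_\fD \hookrightarrow W^{1,2}_\fD$ and $W^{-1,q}_\fD \hookrightarrow W^{-1,2}_\fD$ together with $\ca_q = \ca_2|_{W^{1,q}_\fD}$, then invoke coercivity (the paper does the same computation by testing against $\varphi = \psi$ rather than citing Lax--Milgram by name); for part~(b), you solve $(\ca_q+I)v = (\ca_2+I)\psi$ in $W^{1,q}_\fD$ and deduce $v=\psi$ from injectivity of $\ca_2+I$, which is verbatim the paper's argument.
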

\begin{proof}
`\ref{lnonauto302-1}'.
Let $\psi \in \ker(\ca_q + I)$.
Then $\psi \in W^{1,q}_\fD \subset W^{1,2}_\fD$
and $\int_\Omega \mu \nabla \psi \cdot \overline {\nabla \varphi} + \int_\Omega \psi \, \overline \varphi = 0$
for all $\varphi \in C^\infty_\fD$. 
By continuity and density the latter is then also valid for all $\varphi \in W^{1,2}_\fD$, 
in particular for $\varphi = \psi$.
Since $\mu$ is elliptic one deduces that $\psi = 0$.
So the operator $\ca_q + I$ is injective for all $q \in [2,\infty)$.

`\ref{lnonauto302-2}'.
Let $\psi \in \dom \widetilde \ca_q$.
Then $\psi \in W^{-1,q}_\fD \cap W^{1,2}_\fD$ and $\ca_2 \psi \in W^{-1,q}_\fD$.
Since $\ca_q + I$ is surjective, there exists a $\varphi \in W^{1,q}_\fD$ such that 
$(\ca_q + I) \varphi = (\ca_2 +I) \psi$.
Then $\varphi \in W^{1,2}_\fD$ and $\ca_2 \varphi = \ca_q \varphi$.
So $(\ca_2 + I) \varphi = (\ca_2 +I) \psi$.
Since $(\ca_2 + I)$ is injective, one deduces that $\psi = \varphi \in W^{1,q}_\fD$.
So $\dom \widetilde \ca_q \subset W^{1,q}_\fD$.
The reverse inclusion is trivial.
\end{proof}

The next proposition states that the set $\fI_\mu$ is always 
non-empty and open.

\begin{prop} \label{p-2} 
Adopt Assumption~\ref{assu-general}.
Then for all $\mu \in \ce$
the set $\fI_\mu$ is an open interval which contains~2.
Moreover, for all $c_\bullet,c^\bullet > 0$ there
are $\varepsilon, \delta > 0$ such that 
$(2-\delta, 2+\varepsilon) \subset \fI_\mu$ for all $\mu \in \ce(c_\bullet,c^\bullet)$,
and, in addition, 
\[
\sup_{\mu \in \ce(c_\bullet,c^\bullet)} 
    \|(\ca_q(\mu) +I)^{-1}\|_{W^{-1,q}_\fD \to W^{1,q}_\fD} < \infty.
\]
\end{prop}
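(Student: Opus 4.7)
The plan is to combine Lax--Milgram at $q=2$ with a single application of the quantitative Sneiberg theorem (Theorem~\ref{t-sneib}), exploiting the complex interpolation identities in Lemma~\ref{l-1}. For any $\mu \in \ce(c_\bullet, c^\bullet)$, the sesquilinear form $\gots_\mu(\psi,\varphi)+(\psi,\varphi)_{L^2}$ is continuous and coercive on $W^{1,2}_\fD$ with constants controlled by $c^\bullet$ and $c_\bullet$ respectively, so by Lax--Milgram the operator $\ca_2(\mu)+I \colon W^{1,2}_\fD \to W^{-1,2}_\fD$ is a topological isomorphism whose inverse is bounded by a constant depending only on $c_\bullet$. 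In particular $2 \in \fI_\mu$.

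To extract the uniform neighbourhood of $2$, fix $q_1,q_2 \in (1,\infty)$ with $q_1 < 2 < q_2$, and let $\mathcal R := \ca(\mu)+I$. By Definition~\ref{d-2} together with a Sobolev-embedding estimate for the lower order term (available uniformly in $\mu$ through Remark~\ref{r-fortsetz}), one has $\|\mathcal R\|_{W^{1,q_j}_\fD \to W^{-1,q_j}_\fD} \le M$ with $M = M(c^\bullet, q_1, q_2)$ independent of $\mu$. Lemma~\ref{l-1} identifies $[W^{1,q_1}_\fD, W^{1,q_2}_\fD]_\theta = W^{1,2}_\fD$ and $[W^{-1,q_1}_\fD, W^{-1,q_2}_\fD]_\theta = W^{-1,2}_\fD$ for the $\theta$ with $\tfrac12 = \tfrac{1-\theta}{q_1}+\tfrac{\theta}{q_2}$. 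Since $\mathcal R$ is an isomorphism at this $\theta$ with inverse bound depending only on $c_\bullet$, Theorem~\ref{t-sneib}\ref{p-sneib-2} produces an open interval $(2-\delta, 2+\varepsilon)$ on which $\mathcal R$ remains an isomorphism, together with a uniform bound on $\|\mathcal R^{-1}\|$. Crucially, $\delta$, $\varepsilon$ and the inverse bound depend on $\mu$ only through $c_\bullet, c^\bullet$; this is exactly the second assertion of the proposition.

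For the first assertion, openness of $\fI_\mu$ follows by the same Sneiberg argument applied around an arbitrary $q_0 \in \fI_\mu$, interpolating between any $q_1 < q_0 < q_2$ in $(1,\infty)$. To upgrade openness to the interval property, suppose $q_1, q_2 \in \fI_\mu$ with $q_1 < q_2$ and let $q \in (q_1, q_2)$. The inverses $(\ca_{q_i}(\mu)+I)^{-1}$ are consistent on the overlap $W^{-1,q_1}_\fD \cap W^{-1,q_2}_\fD$: for indices $\geq 2$ this is the injectivity recorded in Lemma~\ref{lnonauto302}, and for indices $<2$ it is obtained from the duality relation $\ca_q(\mu) = \ca_{q'}(\mu^T)^*$ which transfers the bijectivity to the dual indices, already covered by the first case. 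Complex interpolation of these consistent inverses via Lemma~\ref{l-1} then produces a bounded two-sided inverse of $\ca_q(\mu)+I$ on $W^{-1,q}_\fD$, so $q \in \fI_\mu$.

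The main obstacle is verifying that the constants entering Sneiberg are genuinely uniform on $\ce(c_\bullet, c^\bullet)$: the endpoint norms of $\mathcal R$ and the isomorphism bound at $q=2$ must be controlled solely by $c_\bullet, c^\bullet$, with the geometric data of Assumption~\ref{assu-general} absorbed into Sobolev constants via Remark~\ref{r-fortsetz}. A secondary technical point is the injectivity/consistency step for indices below $2$ in the interval-property argument, which must be handled by duality rather than the direct coercivity estimate available for $q \geq 2$.
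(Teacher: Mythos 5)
Your proposal is correct and essentially reconstructs the argument the paper delegates to a citation. The paper's own proof is three lines: connectedness of $\fI_\mu$ is attributed to Lemma~\ref{l-1}, $2 \in \fI_\mu$ to Lax--Milgram, and the uniformity assertions (the existence of $\varepsilon,\delta$ and the uniform inverse bound on $\ce(c_\bullet,c^\bullet)$) are cited as \cite[Theorem~5.6 and Remark~5.7]{HJKR}. You replace that citation with a self-contained application of the quantitative Sneiberg theorem, taking $\mathcal R = \ca(\mu)+I$ on the scales $W^{\pm1,q}_\fD$, using the uniform endpoint norm bound $\|\ca_q(\mu)\|_{W^{1,q}_\fD\to W^{-1,q}_\fD}\le c^\bullet$ from Definition~\ref{d-2} and the Lax--Milgram inverse bound depending only on $c_\bullet$; this gives the second assertion directly. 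Your argument for the interval property (consistent inverses $(\ca_{q_i}(\mu)+I)^{-1}$, interpolated via Lemma~\ref{l-1}, yield a two-sided inverse at intermediate $q$) is the content behind the paper's ``connectedness from Lemma~\ref{l-1}.'' What your version buys is self-containment and explicit tracking of where each constant comes from, at the cost of one small subtlety you should flag: Lemma~\ref{l-1} gives equality of interpolation spaces only up to norm equivalence, so the Sneiberg constants $\beta,\gamma$ pick up geometry-dependent (but $\mu$-independent) factors, which suffices for the stated qualitative uniformity but would need attention if you wanted explicit numerical bounds. One minor inefficiency: your consistency discussion for the interval step distinguishes $q\ge 2$ from $q<2$ via duality, but this case distinction is unnecessary; once $q_1,q_2 \in \fI_\mu$, both operators are isomorphisms hence injective, and consistency follows immediately from $\ca_{q_1}(\mu)|_{W^{1,q_2}_\fD} = \ca_{q_2}(\mu)$ noted after Definition~\ref{d-2}.
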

\begin{proof}
If follows from Lemma~\ref{l-1} that $\fI_\mu$ is connected.
Moreover, $2 \in \fI_\mu$
by the  Lax--Milgram theorem. 
For the other assertions, see \cite[Theorem~5.6 and Remark~5.7]{HJKR}. 
\end{proof}

Assumption~\ref{assu-general} implies that the Kato problem for the operator $A_q$ with 
real measurable coefficients and boundary conditions is solved on $L^q$ for all $q \in (1,2]$.

\begin{prop}\label{p-3}
Adopt Assumption~\ref{assu-general}.
Let $\mu \in \ce$ and $q \in (1,2]$.
Then $\dom A_q^{1/2} = W^{1,q}_\fD$.
Moreover, the operator $(A_q + I)^{1/2}$ is a topological isomorphism from 
$W^{1,q}_\fD$ onto $L^q$.
Hence its adjoint map $((A_q + I)^{1/2})'$ 
is a topological isomorphism from  $L^p$ onto $W^{-1,p}_\fD$
for all $p \in [2,\infty)$.
\end{prop}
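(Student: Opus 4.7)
The plan is to reduce the statement to the Kato square root equality on $L^2$ and then extrapolate to $q \in (1,2)$. For $q=2$, the Kato square root problem in this mixed-boundary setting asserts that $\dom A_2^{1/2} = W^{1,2}_\fD$ with equivalence of the graph norm of $A_2^{1/2}$ and the $W^{1,2}_\fD$-norm. Under Assumption~\ref{assu-general}, this is precisely the content of the recent solutions of the Kato problem for divergence-form operators with mixed boundary conditions (cf.\ \cite{EHT}, \cite{ABHR} and references therein): the bi-Lipschitz charts around $\overline{\partial\Omega\setminus\fD}$, the Ahlfors--David regularity of $\fD$, and the $d$-set property of $\Omega$ are exactly the geometric ingredients invoked there.

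Next I would promote the identity $\dom A_q^{1/2} = W^{1,q}_\fD$ to all $q \in (1,2]$. Since the semigroups $S^{(q)}$ are consistent and holomorphic across $q \in [1,\infty)$, the fractional powers $A_q^{1/2}$ are consistent with $A_2^{1/2}$ on the intersection of their domains, so the $L^q$-Kato estimate $\|A_q^{1/2}\psi\|_{L^q} \simeq \|\nabla \psi\|_{L^q}$ is obtained by extrapolating from $q=2$ via square function estimates and Calder\'on--Zygmund-type arguments. The relevant extrapolation is available in precisely the geometric setting of Assumption~\ref{assu-general}; again, this is extracted from \cite{EHT} (and, in the Neumann/Dirichlet intermediate range, from \cite{ABHR}).

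With the norm equivalence $\|(A_q+I)^{1/2}\psi\|_{L^q} \simeq \|\psi\|_{W^{1,q}_\fD}$ in hand, the topological isomorphism statement follows quickly. The operator $A_q+I$ is invertible on $L^q$ since $-A_q$ generates a contraction semigroup on $L^q$, whence $(A_q+I)^{1/2}$ is a closed, injective operator with bounded inverse $(A_q+I)^{-1/2}$ on $L^q$ by the holomorphic functional calculus. Combining invertibility with the Kato equivalence yields that $(A_q+I)^{1/2}\colon W^{1,q}_\fD \to L^q$ is a topological isomorphism.

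For the final assertion, fix $p \in [2,\infty)$ and set $q = p' \in (1,2]$. Taking the antilinear adjoint of the isomorphism $(A_q+I)^{1/2}\colon W^{1,q}_\fD \to L^q$ with respect to the extended $L^2$-pairing produces a topological isomorphism from $(L^q)' = L^p$ onto $(W^{1,q}_\fD)' = W^{-1,p}_\fD$, which is the claimed statement. The main obstacle is the first step: the mixed-boundary $L^2$-Kato equality in this very general geometry is a deep result, and the subsequent extrapolation to $q<2$ relies on fine heat-kernel/square-function estimates; both are used here as black boxes rather than reproved.
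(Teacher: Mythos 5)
Your proposal is correct and follows essentially the same route as the paper: the paper likewise proves the domain identity by citing \cite{EHT} (Main Theorem~4.1) for $q=2$ and \cite{ABHR} (Theorem~5.1) for $q\in(1,2)$ as black boxes, then obtains the isomorphism from the closed graph theorem (which is interchangeable with your functional-calculus argument) and the last claim by dualizing with respect to the extended $L^2$-pairing exactly as you do.
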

\begin{proof}
The case $q = 2$ is proved in \cite[Main Theorem 4.1]{EHT}.
The general case is in \cite[Theorem~5.1]{ABHR}. 
\end{proof}

Let $\mu \in \ce$ and $q \in (1,\infty)$.
If $\dom A_q^{1/2} = W^{1,q}_\fD$, then it follows from the 
closed graph theorem that the operator $(A_q + I)^{1/2}$ is a topological isomorphism from 
$W^{1,q}_\fD$ onto $L^q$.
As a consequence we can split the problem whether $\ca_q + I$ is an 
isomorphism from $W^{1,q}_\fD$ onto $W^{-1,q}_\fD$ into two parts:
from $W^{1,q}_\fD$ into $L^q$ and from $L^q$ into $W^{-1,q}_\fD$.

\begin{thm} \label{thmc-001}
Adopt Assumption~\ref{assu-general}.
\mbox{}
\begin{tabel}
\item  \label{thmc-001-1}
Let $q \in [2,\infty)$ and $\mu \in \ce$.
Then $q \in \fI_\mu$ if and only if $\dom A_q^{1/2} = W^{1,q}_\fD$.
\item  \label{thmc-001-1.5}
Let $\mu \in \ce$ and $q \in \fI_\mu$.
Then 
\[
\dom A_q 
= \{ \psi \in W^{1,q}_\fD : \ca_q \psi \in L^q \}
\]
and $A_q \psi = \ca_q \psi$ for all $\psi \in \dom A_q$.
\item \label{thmc-001-2}
For all $c_\bullet,c^\bullet > 0$ there exists an $\varepsilon > 0$ such that 
$\dom A_q^{1/2} = W^{1,q}_\fD$ for all 
$q \in (1,2+\varepsilon)$ and $\mu \in \ce(c_\bullet,c^\bullet)$.
\end{tabel}
\end{thm}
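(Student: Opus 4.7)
The three statements are tightly coupled, and the core idea is to factor the form operator $\ca_q + I$ through $L^q$ via a Kato square root argument applied at the conjugate exponent $q'$. Specifically, I would first apply Proposition~\ref{p-3} to the transposed coefficient $\mu^T \in \ce$ at $q' \in (1,2]$: this produces, for every $q \in [2,\infty)$ and unconditionally in $\mu$, a topological isomorphism
\[
S_q := ((A_{q'}(\mu^T)+I)^{1/2})' \colon L^q \to W^{-1,q}_\fD .
\]

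The technical heart of part~\ref{thmc-001-1} is the factorization identity
\[
\ca_q(\mu) + I = S_q \circ (A_q(\mu)+I)^{1/2},
\]
which for $q > 2$ is a priori only formal, since the right-hand side requires $\dom A_q^{1/2} \supset W^{1,q}_\fD$. I would establish it first in the $L^2$-setting: for $\psi \in \dom A$ the form--operator relation gives $(\ca_2 + I)\psi = (A+I)\psi = S_2 (A_2+I)^{1/2}\psi$ in $W^{-1,2}_\fD$, using that the Hilbert adjoint of $(A_2(\mu^T)+I)^{1/2} \colon W^{1,2}_\fD \to L^2$ coincides with $S_2$. For general $q \ge 2$ the identity is then promoted to one on $W^{1,q}_\fD$ by consistency of the holomorphic semigroups on $L^2 \cap L^q = L^q$, consistency of fractional powers, and density of $\dom A \cap W^{1,q}_\fD$ in $W^{1,q}_\fD$.

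Granted this factorization, part~\ref{thmc-001-1} is short. If $\dom A_q^{1/2} = W^{1,q}_\fD$, both factors are topological isomorphisms and so is $\ca_q + I$. Conversely, if $q \in \fI_\mu$ then $T := S_q^{-1}(\ca_q+I) \colon W^{1,q}_\fD \to L^q$ is a bounded isomorphism. Semigroup consistency and Lemma~\ref{lnonauto302}\ref{lnonauto302-2} show $\dom A_q \subseteq W^{1,q}_\fD$, and the factorization then gives that $T$ agrees with $(A_q+I)^{1/2}$ on the core $\dom A_q$ of the closed operator $(A_q+I)^{1/2}$; approximating an arbitrary $\psi \in \dom A_q^{1/2}$ by $\psi_n \in \dom A_q$ in the graph norm and applying the bounded inverse $T^{-1}$ forces the Cauchy sequence $(\psi_n)$ to converge in $W^{1,q}_\fD$, so $\dom A_q^{1/2} \subseteq W^{1,q}_\fD$; the reverse inclusion is immediate from $T$ being defined on all of $W^{1,q}_\fD$.

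Part~\ref{thmc-001-1.5} then follows: once $\dom A_q^{1/2} = W^{1,q}_\fD$ is available (for $q \ge 2$ from part~\ref{thmc-001-1}, for $q \in (1,2]$ from Proposition~\ref{p-3}), both $A_q + I$ and the restriction of $\ca_q + I$ to $\{\psi \in W^{1,q}_\fD : \ca_q \psi \in L^q\}$ are bijections onto $L^q$, and they coincide on $\dom A$ by the form--operator identity $A\psi = \ca_2\psi$; semigroup consistency propagates this to $\dom A_q$. For part~\ref{thmc-001-2}, I would combine Proposition~\ref{p-2} (giving $\varepsilon, \delta > 0$ depending only on $c_\bullet,c^\bullet$ with $(2-\delta, 2+\varepsilon) \subset \fI_\mu$) with part~\ref{thmc-001-1} to extend Kato past $q = 2$ up to $q = 2+\varepsilon$, and take the union with the range $q \in (1,2]$ from Proposition~\ref{p-3}. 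The main obstacle throughout is verifying the factorization identity at the $L^q$-level without circularity: since $(A_q+I)^{1/2}$ is a priori only defined on $\dom A_q^{1/2}$, one must rely on a careful use of the $L^2$-identity together with semigroup and fractional-power consistency on $L^2 \cap L^q$.
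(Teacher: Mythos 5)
Your proposal is correct and follows essentially the same route as the paper: the factorization $\ca_q+I = ((A_{q'}(\mu^T)+I)^{1/2})'\circ(A_q+I)^{1/2}$ is precisely the paper's identity~\eqref{ethmc-001;1} read as an operator equation, first established weakly against $\dom A_2(\mu^T)$ and extended by density, and both proofs then reduce each of~\ref{thmc-001-1}--\ref{thmc-001-2} to invertibility of the two factors together with Proposition~\ref{p-3}, Proposition~\ref{p-2} and Lemma~\ref{lnonauto302}. The only differences are presentational (your graph-norm approximation via $T^{-1}$ in the converse of~\ref{thmc-001-1} and the bijectivity argument in~\ref{thmc-001-1.5} replace the paper's explicit duality computations, and a couple of steps such as ``the reverse inclusion is immediate'' are compressed but fillable).
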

\begin{proof}
`\ref{thmc-001-1}'. 
Suppose that $\dom A_q^{1/2} = W^{1,q}_\fD$.
Then $(A_q + I)^{1/2}$ is an isomorphism from $W^{1,q}_\fD$ onto $L^q$.
Write $p = q'$.
Then it follows from Proposition~\ref{p-3} that $\dom A_p(\mu^T)^{1/2} = W^{1,p}_\fD$
and the operator $(A_p(\mu^T) + I)^{1/2}$ is a topological isomorphism from 
$W^{1,p}_\fD$ onto $L^p$.
Let $\psi \in W^{1,q}_\fD$. 
We first show that 
\begin{equation}
\langle (A_q + I)^{1/2} \psi, (A_p(\mu^T) + I)^{1/2} u \rangle_{L^q \times L^p}
= \langle (\ca_q + I) \psi, u \rangle_{W^{-1,q}_\fD \times W^{1,p}_\fD}
\label{ethmc-001;1}
\end{equation}
for all $u \in W^{1,p}_\fD$.
Let $u \in \dom(A_2(\mu^T))$.
Then $u \in \dom(A_p(\mu^T))$ and 
\begin{eqnarray*}
\langle (A_q + I)^{1/2} \psi, (A_p(\mu^T) + I)^{1/2} u \rangle_{L^q \times L^p}
& = & \langle (A_2 + I)^{1/2} \psi, (A_2(\mu^T) + I)^{1/2} u \rangle_{L^2 \times L^2}  \\
& = & (\psi, (A_2(\mu^T) + I) u)_{L^2}  \\
& = & \sum_{k,l=1}^d \int_\Omega \mu_{kl} \, (\partial_k \psi) \, \overline{ \partial_l u} + (\psi, u)_{L^2}  \\
& = & \langle (\ca_q + I) \psi, u \rangle_{W^{-1,q}_\fD \times W^{1,p}_\fD}
 . 
\end{eqnarray*}
So (\ref{ethmc-001;1}) is valid for all $u \in \dom(A_2(\mu^T))$.
Clearly $u \mapsto \langle (A_q + I)^{1/2} \psi, (A_p(\mu^T) + I)^{1/2} u \rangle_{L^q \times L^p}$
is continuous from $\dom (A_p(\mu^T) + I)^{1/2}$ into $\Ci$.
Also $u \mapsto \langle (\ca_q + I) \psi, u \rangle_{W^{-1,q}_\fD \times W^{1,p}_\fD}$ 
is continuous from $W^{1,p}_\fD$ into $\Ci$ and hence 
from $\dom (A_p(\mu^T) + I)^{1/2}$ into $\Ci$.
Since $\dom A_2(\mu^T)$ is dense in $\dom A_p(\mu^T)$, and hence in 
$\dom (A_p(\mu^T) + I)^{1/2}$, it follows by continuity that 
(\ref{ethmc-001;1}) is valid for all $u \in \dom (A_p(\mu^T) + I)^{1/2} = W^{1,p}_\fD$.

Let $\varphi \in W^{-1,q}_\fD$.
By the last part of Proposition~\ref{p-3} there exists a $\tau \in L^q$ such that 
$\langle \tau, (A_p(\mu^T) + I)^{1/2} u \rangle_{L^q \times L^p}
= \langle \varphi, u \rangle_{W^{-1,q}_\fD \times W^{1,p}_\fD}$ 
for all $u \in W^{1,p}_\fD$.
By assumption there exists a $\psi \in W^{1,q}_\fD$ such that 
$(A_q + I)^{1/2} \psi = \tau$.
Then 
\begin{eqnarray*}
\langle \varphi, u \rangle_{W^{-1,q}_\fD \times W^{1,p}_\fD}
& = & \langle \tau, (A_p(\mu^T) + I)^{1/2} u \rangle_{L^q \times L^p}  \\
& = & \langle (A_q + I)^{1/2} \psi, (A_p(\mu^T) + I)^{1/2} u \rangle_{L^q \times L^p}
= \langle (\ca_q + I) \psi, u \rangle_{W^{-1,q}_\fD \times W^{1,p}_\fD}
\end{eqnarray*}
for all $u \in W^{1,p}_\fD$, where we used (\ref{ethmc-001;1}) in the last step.
So $\varphi = (\ca_q + I) \psi$ and $(\ca_q + I)$ is surjective.
Therefore $q \in \fI_\mu$ by Lemma~\ref{lnonauto302}\ref{lnonauto302-1}.

Next let $q \in \fI_\mu$.
We shall show that $W^{1,q}_\fD = \dom A_q^{1/2}$.
Let $\psi \in W^{1,q}_\fD$.
Then $\psi \in W^{1,2}_\fD$ and $(\ca_q + I) \psi \in W^{-1,q}_\fD$.
By the last part of Proposition~\ref{p-3} there exists a $\tau \in L^q$
such that 
$\langle \tau, (A_p(\mu^T) + I)^{1/2} u\rangle_{L^q \times L^p}
= \langle (\ca_q + I) \psi, u \rangle_{W^{-1,q}_\fD \times W^{1,p}_\fD}$
for all $u \in W^{1,p}_\fD$, where $p = q'$.
Let $u \in \dom A_2(\mu^T)$.
Then 
\begin{eqnarray*}
\langle \tau, (A_p(\mu^T) + I)^{1/2} u\rangle_{L^q \times L^p}
& = & \langle (\ca_q + I) \psi, u \rangle_{W^{-1,q}_\fD \times W^{1,p}_\fD}  \\
& = & \langle (\ca_2 + I) \psi, u \rangle_{W^{-1,2}_\fD \times W^{1,2}_\fD}  \\
& = & (\psi, (A_2(\mu^T) + I) u)_{L^2}  \\
& = & \langle \psi, (A_p(\mu^T) + I) u \rangle_{L^q \times L^p}
 .  
\end{eqnarray*}
Since $\dom A_2$ is a core for $A_p$ one deduces that 
\[
\langle \tau, (A_p(\mu^T) + I)^{1/2} u\rangle_{L^q \times L^p}
= \langle \psi, (A_p(\mu^T) + I) u \rangle_{L^q \times L^p}
\]
for all $u \in \dom A_p(\mu^T)$.
Hence $\langle \tau, v \rangle_{L^q \times L^p}
= \langle \psi, (A_p(\mu^T) + I)^{1/2} v \rangle_{L^q \times L^p}$ for all 
$v \in \dom (A_p(\mu^T))^{1/2}$.
This implies that $\psi \in \dom( ((A_p(\mu^T) + I)^{1/2})^* ) = \dom A_q^{1/2}$.

Conversely, let $\psi \in \dom A_q^{1/2}$.
Then $(A_q + I)^{1/2} \psi \in L^q$.
By the last part of Proposition~\ref{p-3} there exists a $\varphi \in W^{-1,q}_\fD$
such that 
$\langle (A_q + I)^{1/2} \psi, (A_p(\mu^T) + I)^{1/2} u \rangle_{L^q \times L^p}
= \langle \varphi, u \rangle_{W^{-1,q}_\fD \times W^{1,p}_\fD}$
for all $u \in W^{1,p}_\fD$.
Since $q \in \fI_\mu$, the operator $\ca_q + I$ is surjective.
Hence there exists a $\tau \in W^{1,q}_\fD$ such that 
$(\ca_q + I) \tau = \varphi$.
Now let $u \in \dom A_2(\mu^T)$.
Then 
\begin{eqnarray*}
(\psi, (A_2(\mu^T) + I) u)_{L^2}
& = & \langle \psi, (A_p(\mu^T) + I) u \rangle_{L^q \times L^p}  \\
& = & \langle (A_q + I)^{1/2} \psi, (A_p(\mu^T) + I)^{1/2} u \rangle_{L^q \times L^p}  \\
& = & \langle \varphi, u \rangle_{W^{-1,q}_\fD \times W^{1,p}_\fD}  \\
& = & \langle (\ca_q + I) \tau, u \rangle_{W^{-1,q}_\fD \times W^{1,p}_\fD}  \\
& = & \langle (\ca_2 + I) \tau, u \rangle_{W^{-1,2}_\fD \times W^{1,2}_\fD}  \\
& = & (\tau, (A_2(\mu^T) + I) u)_{L^2}
\end{eqnarray*}
Since $(A_2(\mu^T) + I)$ is surjective, it follows that $\psi = \tau \in W^{1,q}_\fD$.

`\ref{thmc-001-1.5}'. 
Again write $p = q'$.
First suppose that $q \geq 2$.
Let $\psi \in \dom A_q$.
Then $\psi \in \dom A_q^{1/2} = W^{1,q}_\fD$ by Statement~\ref{thmc-001-1}.
Moreover, $\psi \in \dom A_2$.
If $u \in \dom (A_2(\mu^T))^{1/2}$ then $u \in \dom (A_p(\mu^T))^{1/2}$ and 
(\ref{ethmc-001;1}) gives
\begin{eqnarray*}
\langle (\ca_q + I) \psi,u \rangle_{W^{-1,q}_\fD \times W^{1,p}_\fD}
& = & ((A_2 + I)^{1/2} \psi, (A_2(\mu^T) + I)^{1/2} u)_{L^2}  \\
& = & ((A_2 + I) \psi, u)_{L^2}
= \langle (A_q + I) \psi,u \rangle_{L^q \times L^p}
 .  
\end{eqnarray*}
Hence $\ca_q \psi = A_q \psi \in L^q$.

Conversely, let $\psi \in W^{1,q}_\fD$ and suppose that $\ca_q \psi \in L^q$.
Write $\tau = (\ca_q + I) \psi$.
Then 
\begin{eqnarray*}
\langle \tau, u \rangle_{L^q \times L^p}
& = & \langle (\ca_q + I) \psi,u \rangle_{W^{-1,q}_\fD \times W^{1,p}_\fD}  \\
& = & \langle (A_q + I)^{1/2} \psi, (A_p(\mu^T) + I)^{1/2} u \rangle_{L^q \times L^p}
\end{eqnarray*}
for all $u \in W^{1,p}_\fD = \dom (A_p(\mu^T) + I)^{1/2}$, where we used (\ref{ethmc-001;1}).
It follows that  $(A_q + I)^{1/2} \psi \in \dom ((A_p(\mu^T)^{1/2} + I)^*) = \dom (A_q + I)^{1/2}$.
Hence $\psi \in \dom ((A_q + I)^{1/2} \, (A_q + I)^{1/2}) = \dom A_q$.

Now suppose that $q \leq 2$.
Let $\psi \in \dom A_q$.
Then $\psi \in \dom A_q^{1/2} = W^{1,q}_\fD$ by Proposition~\ref{p-3}.
If $u \in \dom A_p(\mu^T)$ then by the above $u \in W^{1,p}_\fD$ and $A_p(\mu^T) u = \ca_p(\mu^T) u$.
So
\begin{eqnarray*}
\langle A_q \psi,u \rangle_{W^{-1,q}_\fD \times W^{1,p}_\fD}
& = & \langle A_q \psi,u \rangle_{L^q \times L^p}
= \langle \psi,A_p(\mu^T) u \rangle_{L^q \times L^p}
= \langle \psi,\ca_p(\mu^T) u \rangle_{L^q \times L^p}  \\
& = & \langle \psi,\ca_p(\mu^T) u \rangle_{W^{1,q}_\fD \times W^{-1,p}_\fD}
= \langle \ca_q \psi,u \rangle_{L^q \times L^p}
\end{eqnarray*}
and 
\begin{equation}
\langle A_q \psi,u \rangle_{W^{-1,q}_\fD \times W^{1,p}_\fD}
= \langle \ca_q \psi,u \rangle_{L^q \times L^p}  .
\label{ethmc-001;2}
\end{equation}
Since $\dom A_p(\mu^T)$ is dense in $\dom A_p(\mu^T)^{1/2} = W^{1,p}_\fD$,
one deduces that (\ref{ethmc-001;2}) is valid for all $u \in W^{1,p}_\fD$.
So $\ca_q \psi = A_q \psi \in L^q$.

Conversely, suppose that $\psi \in W^{1,q}_\fD$ and $\ca_q \psi \in L^q$.
Let $u \in \dom A_p(\mu^T)$.
Then again by the above
\begin{eqnarray*}
\langle \psi, A_p(\mu^T) u \rangle_{L^q \times L^p}
& = & \langle \psi, \ca_p(\mu^T) u \rangle_{L^q \times L^p}
= \langle \psi, \ca_p(\mu^T) u \rangle_{W^{1,q}_\fD \times W^{-1,p}_\fD}  \\
& = & \langle \ca_q \psi, u \rangle_{W^{-1,q}_\fD \times W^{1,p}_\fD}
= \langle \ca_q \psi, u \rangle_{L^q \times L^p}
 .  
\end{eqnarray*}
So $\psi \in \dom ( (A_p(\mu^T))^* ) = \dom A_q$
 and the proof of Statement~\ref{thmc-001-1.5} is complete.

`\ref{thmc-001-2}'. 
This follows from Statement~\ref{thmc-001-1} and Proposition~\ref{p-2}.
\end{proof}

We next present a few illustrative examples with explicit subsets of the set $\fI_\mu$.
Note that the requirements on the geometry of $\Omega$ and 
the Dirichlet boundary part $\fD$, as well as on the coefficient function $\mu$
is much higher in the examples than in our general assumptions. 

\begin{exam} \label{xnonaut317.3}
Assume that $\Omega$ is a $C^1$-domain and that $\fD =\partial \Omega$ or
$\fD =\emptyset$  (pure Dirichlet or pure Neumann condition).
If $\mu \in \ce$ is uniformly continuous on
$\Omega$, then $\fI_\mu =(1,\infty)$ by \cite[Section~15]{ADN}
 or \cite[pages~156--157]{Mor}.

The conclusion remains true, if there is a $C^1$-subdomain $\Lambda$ with positive distance 
to the boundary, such that $\mu|_\Lambda$ and $\mu|_{\Omega \setminus \overline \Lambda}$
are uniformly continuous, see \cite[Theorem~1.1 and Remark~3.15]{ElschnerRS}.
\end{exam}

\begin{exam} \label{xnonaut317.5}
Assume that $\Omega$ is a Lipschitz graph-domain (see \cite[Definition~1.2.1.1]{Gris}).
There are equivalent terminologies for this notion:
 strong Lipschitz domain in \cite[Section~1.1.8]{Maz} and
$\Omega$ possesses the uniform cone property in \cite[Section~1.2.2]{Gris}.
Suppose that $\mu \in \ce$ takes symmetric matrices as
values.
Then, under the same continuity properties for $\mu$ as in Example~\ref{xnonaut317.3} (both cases), 
the (open) set $\fI_\mu$ contains the interval $[2,3]$ both in the pure Dirichlet case
(that is $\fD =\partial \Omega$), 
and in the pure Neumann case (that is $\fD =\emptyset$), see \cite{ElschnerRS}.
Moreover, one cannot replace $3$ by a larger number, independent of $\Omega$ and $\mu$.
For the pure Dirichlet Laplacian this result was already proved in 
\cite[Theorem~1.1(c) and Theorem~1.2(a)]{JK},
and for the pure Neumann Laplacian in \cite{Zanger}.
\end{exam}

\begin{exam} \label{xnonaut317.8}
In \cite{DisserKaiserRehberg} there are given a huge variety of domains $\Omega \subset \R^3$,
Dirichlet boundary parts $\fD$ and (possibly discontinuous -- even up to
the boundary) elliptic coefficient
functions $\mu$, such that $\fI_\mu$ contains the interval $[2,3]$.
In particular, it is allowed that $\fD \cap \overline {\partial \Omega
\setminus \fD}$ is not empty, i.e.\ the Dirichlet boundary part 
meets the Neumann part.
\end{exam}

For all $q \in (1,\infty)$ we consider the operator $\ca_q$ as a densely defined 
operator in $W^{-1,q}_\fD$ with domain $W^{1,p}_\fD$.

Let $\mu \in \ce$ and $q \in \fI_\mu \cup [2,\infty)$.
Write $p = q'$.
Then it follows from Proposition~\ref{p-3} and Theorem~\ref{thmc-001}\ref{thmc-001-1}
that $(A_p(\mu^T) + I)^{1/2} \colon W^{1,p}_\fD \to L^p$ is a
topological isomorphism.
Let $(( A_p(\mu^T) + I)^{1/2})' \colon L^q \to W^{-1,q}_\fD$ be the adjoint 
of the operator.
Then $(( A_p(\mu^T) + I)^{1/2})'$ is an isomorphism too.
We use the isomorphism $(( A_p(\mu^T) + I)^{1/2})'$ to transfer the 
$C_0$-semigroup $S^{(q)}$ on $L^q$ to a $C_0$-semigroup $T^{(q)}$
on $W^{-1,q}_\fD$.
Explicitly, for all $t \in (0,\infty)$ define 
$T^{(q)}_t \colon W^{-1,q}_\fD \to W^{-1,q}_\fD$
by 
\begin{equation}
T^{(q)}_t = (( A_p(\mu^T) + I)^{1/2})' \, S^{(q)}_t \, 
            \Big( (( A_p(\mu^T) + I)^{1/2})' \Big)^{-1}
 .  
\label{elnonaut304;30}
\end{equation}
Then $T^{(q)}$ is a $C_0$-semigroup on $W^{-1,q}_\fD$.
Clearly $(( A_p(\mu^T) + I)^{1/2})'$ is an extension of the operator
$(A_q + I)^{1/2}$ and hence $\Big( (( A_p(\mu^T) + I)^{1/2})' \Big)^{-1}$ is 
an extension of $(A_q + I)^{-1/2}$.
Since $(A_q + I)^{-1/2}$ and $S^{(q)}_t$ commute for all $t > 0$, it follows
that $T^{(q)}_t$ is an extension of $S^{(q)}_t$ for all $t > 0$.

We denote the generator of $T^{(q)}$ by $- B_q = - B_q(\mu)$.
Obviously $T^{(q_1)}$ is consistent with $T^{(q_2)}$ for all 
$q_1,q_2 \in \fI_\mu \cup [2,\infty)$.
Hence the graph of $B_{q_1}$ is an extension of the graph of $B_{q_2}$
and 
\begin{equation}
\{ (\psi, B_{q_2} \psi) : \psi \in \dom B_{q_2} \}
= \{ (\psi, B_{q_1} \psi) : \psi \in \dom B_{q_2} \} 
      \cap \Big( W^{-1,q_2}_\fD \times W^{-1,q_2}_\fD \Big)
\label{elnonaut304;1}
\end{equation}
if $q_1 \leq q_2$.

\begin{lemma} \label{lnonaut304}
Adopt Assumption~\ref{assu-general}. 
Let $\mu$ be in $\ce$.
\begin{tabel}
\item \label{lnonaut304-1}
If $q \in \fI_\mu \cup [2,\infty)$, then 
$\dom B_q = \dom A_q^{1/2}$.
\item \label{lnonaut304-2}
If $q \in \fI_\mu$, then $B_q = \ca_q$.
\item \label{lnonaut304-3}
If $q \in [2,\infty)$, then $B_q = \widetilde \ca_q$.
\end{tabel}
\end{lemma}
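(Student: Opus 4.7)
The plan is to prove the three statements in order, using that $U := ((A_p(\mu^T)+I)^{1/2})' \colon L^q \to W^{-1,q}_\fD$ (with $p=q'$) is a topological isomorphism and that, by construction, $T^{(q)} = U S^{(q)} U^{-1}$.

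For Statement~\ref{lnonaut304-1}, standard semigroup similarity gives that the generator of $T^{(q)}$ is $-B_q = -U A_q U^{-1}$ with $\dom B_q = U(\dom A_q)$. Since $U$ extends $(A_q+I)^{1/2}$ (because $U$ is the dual of the $W^{1,p}_\fD \to L^p$ Kato isomorphism and $q \in \fI_\mu \cup [2,\infty)$ by Proposition~\ref{p-3} and Theorem~\ref{thmc-001}\ref{thmc-001-1}), the image $U(\dom A_q) = (A_q+I)^{1/2}(\dom A_q)$ lies in $L^q$. A routine sectorial calculus argument then identifies $(A_q+I)^{1/2}(\dom A_q)$ with $\dom A_q^{1/2}$: for $\varphi\in\dom A_q$, $(A_q+I)^{1/2}\varphi$ belongs to $\dom A_q^{1/2}$ because $(A_q+I)^{1/2}((A_q+I)^{1/2}\varphi) = (A_q+I)\varphi \in L^q$; conversely, any $\tau \in \dom A_q^{1/2}$ is the image of $(A_q+I)^{-1/2}\tau \in \dom A_q$.

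For Statement~\ref{lnonaut304-2}, with $q \in \fI_\mu$ we have $\dom B_q = \dom A_q^{1/2} = W^{1,q}_\fD = \dom \ca_q$ by Statement~\ref{lnonaut304-1} and Theorem~\ref{thmc-001}\ref{thmc-001-1}. For $\psi \in \dom B_q$ I would write $(B_q+I)\psi = U[(A_q+I)^{1/2}\psi]$ with $(A_q+I)^{1/2}\psi \in L^q$. Testing against arbitrary $u \in W^{1,p}_\fD$ and using the definition of $U$ as an adjoint gives
\[
\langle (B_q+I)\psi, u\rangle_{W^{-1,q}_\fD\times W^{1,p}_\fD}
= \langle (A_q+I)^{1/2}\psi,(A_p(\mu^T)+I)^{1/2}u\rangle_{L^q\times L^p},
\]
which by the key identity \eqref{ethmc-001;1} of Theorem~\ref{thmc-001} equals $\langle (\ca_q+I)\psi, u\rangle_{W^{-1,q}_\fD\times W^{1,p}_\fD}$. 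Hence $B_q = \ca_q$.

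For Statement~\ref{lnonaut304-3}, $q\in[2,\infty)$ may fail to lie in $\fI_\mu$, so I must replace \eqref{ethmc-001;1} by its unconditional $q=2$ version (Kato on $L^2$, from Proposition~\ref{p-3}). For the forward inclusion $\dom A_q^{1/2} \subset \dom\widetilde\ca_q$: given $\psi \in \dom A_q^{1/2}$, consistency of the semigroup scale yields $\psi \in \dom A_2^{1/2} = W^{1,2}_\fD$ and $(A_q+I)^{1/2}\psi = (A_2+I)^{1/2}\psi$. Pair $(B_q+I)\psi$ against $u \in \dom A_2(\mu^T)$; using $(A_p(\mu^T)+I)^{1/2}u = (A_2(\mu^T)+I)^{1/2}u$ and the $L^2$-adjointness of $(A_2+I)^{1/2}$ and $(A_2(\mu^T)+I)^{1/2}$ one obtains
\[
\langle (B_q+I)\psi,u\rangle
= \bigl((A_2+I)^{1/2}\psi,(A_2(\mu^T)+I)^{1/2}u\bigr)_{L^2}
= \int_\Omega \mu\nabla\psi\cdot\overline{\nabla u} + (\psi,u)_{L^2},
\]
which is $\langle (\ca_2+I)\psi, u\rangle_{W^{-1,2}_\fD\times W^{1,2}_\fD}$. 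Density of $\dom A_2(\mu^T)$ in $W^{1,2}_\fD$ propagates this to all test functions, and using the embedding $W^{-1,q}_\fD \hookrightarrow W^{-1,2}_\fD$ one concludes that $(\ca_2+I)\psi = (B_q+I)\psi$ in $W^{-1,2}_\fD$, so $\ca_2\psi \in W^{-1,q}_\fD$ and $B_q\psi = \widetilde\ca_q\psi$. For the reverse inclusion, given $\psi \in \dom\widetilde\ca_q$, set $\eta := U^{-1}((\ca_2+I)\psi) \in L^q$; running the same Kato identity backwards against $u \in \dom A_2(\mu^T)$ gives $(\eta - (A_2+I)^{1/2}\psi,(A_2(\mu^T)+I)^{1/2}u)_{L^2} = 0$, and density of $(A_2(\mu^T)+I)^{1/2}(\dom A_2(\mu^T)) = W^{1,2}_\fD$ in $L^2$ forces $\eta = (A_2+I)^{1/2}\psi$. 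Thus $(A_2+I)^{1/2}\psi \in L^q$, and applying $(A_q+I)^{-1/2} = (A_2+I)^{-1/2}|_{L^q}$ recovers $\psi \in L^q$, hence $\psi \in \dom A_q^{1/2}$.

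The main obstacle is bookkeeping: one must consistently identify the same element across $L^2 \hookrightarrow L^q$, $L^q \hookrightarrow W^{-1,q}_\fD$, and $W^{-1,q}_\fD \hookrightarrow W^{-1,2}_\fD$, and track the compatibility of $A_q$, $A_q^{1/2}$ and their $\mu^T$-twins under restriction on the $L^q$-scale. The central analytic ingredient that makes all pairings line up is the $L^2$-adjointness $((A_2+I)^{1/2})^* = (A_2(\mu^T)+I)^{1/2}$ together with the Kato identity of Proposition~\ref{p-3}; everything else is a careful dualisation argument.
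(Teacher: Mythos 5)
Your proposal is correct, and for Statements~\ref{lnonaut304-1} and \ref{lnonaut304-2} it follows essentially the same path as the paper: express $\dom B_q$ as $U(\dom A_q)$, identify $U$ with $(A_q+I)^{1/2}$ on $\dom A_q^{1/2}$ by the adjointness coming from Proposition~\ref{p-3}, and then pass through the Kato identity \eqref{ethmc-001;1} to read off $B_q = \ca_q$. The only noticeable divergence is in Statement~\ref{lnonaut304-3}. The paper dispatches this in two lines by invoking the graph-consistency relation \eqref{elnonaut304;1} of the scale $\{B_q\}$ (a direct consequence of the consistency of $T^{(q)}$) together with the already-established $B_2 = \ca_2$ from part~(b); this immediately gives $\dom B_q = \{\psi \in W^{-1,q}_\fD : B_2\psi \in W^{-1,q}_\fD\} = \dom\widetilde\ca_q$. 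You instead re-derive the identification by hand: pairing against $u\in\dom A_2(\mu^T)$, shifting all dualities to the $L^2$ scalar product, and then exploiting density of the range of $(A_2(\mu^T)+I)^{1/2}$ on $\dom A_2(\mu^T)$. Both are valid; the paper's argument is more economical because the hard work (consistency of $B_q$ across $q$) has been isolated once and for all in \eqref{elnonaut304;1}, while your argument is self-contained and makes the role of the $L^2$-Kato identity and the $\mu\leftrightarrow\mu^T$ adjointness explicit at each step. One point worth flagging for the written version: in your forward inclusion for (c) you use the formula $(B_q+I)\psi = U[(A_q+I)^{1/2}\psi]$ for $\psi\in\dom A_q^{1/2}$; this is legitimate but does rely on Statement~\ref{lnonaut304-1} (to know $\psi\in\dom B_q$) and on the identity $Uv=(A_q+I)^{1/2}v$ for $v\in\dom A_q^{1/2}$, so you should say so rather than leave it as an unexplained rewrite.
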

\begin{proof}
`\ref{lnonaut304-1}'.
By definition of the semigroup $T^{(q)}$ it follows that 
\[
\dom B_q
= \{ (( A_p(\mu^T) + I)^{1/2})' v : v \in \dom A_q \}
 .  \]
Write $p = q'$.
If $v \in \dom A_q^{1/2}$ and $u \in W^{1,p}_\fD$, then 
\begin{eqnarray*}
\langle  (( A_p(\mu^T) + I)^{1/2})' v, u \rangle_{W^{-1,q}_\fD \times W^{1,p}_\fD}
& = & (v, (A_p(\mu^T) + I)^{1/2} u)_{L^q \times L^p}  \\
& = & ( (A_q + I)^{1/2} v, u)_{L^q \times L^p}
= \langle (A_q + I)^{1/2} v, u \rangle_{W^{-1,q}_\fD \times W^{1,p}_\fD}
 .  
\end{eqnarray*}
So $(( A_p(\mu^T) + I)^{1/2})' v =  (A_q + I)^{1/2} v$ for all $v \in \dom A_q^{1/2}$.
Hence
\[
\dom B_q
= \{ (( A_p(\mu^T) + I)^{1/2})' v : v \in \dom A_q \}
= \dom A_q^{1/2}
 .  \]
This proves Statement~\ref{lnonaut304-1}.

`\ref{lnonaut304-2}'.
Suppose $q \in \fI_\mu$.
Then $\dom B_q = \dom A_q^{1/2} = W^{1,q}_\fD = \dom \ca_q$ by Statement~\ref{lnonaut304-1},
Theorem~\ref{thmc-001}\ref{thmc-001-1} and Proposition~\ref{p-3}.
If $v \in \dom A_q^{1/2}$ and $u \in W^{1,p}_\fD$, then
\begin{eqnarray*}
\langle  (( A_p(\mu^T) + I)^{1/2})' (A_q + I)^{1/2} v, u \rangle_{W^{-1,q}_\fD \times W^{1,p}_\fD}
& = & ( (A_q + I)^{1/2} v, ( A_p(\mu^T) + I)^{1/2} u)_{L^q \times L^p}  \\
& = & \langle (\ca_q + I) v, u \rangle_{W^{-1,q}_\fD \times W^{1,p}_\fD} ,
\end{eqnarray*}
where the last equality is (\ref{ethmc-001;1}).
So $ (( A_p(\mu^T) + I)^{1/2})' (A_q + I)^{1/2} v = (\ca_q + I) v$
for all $v \in \dom A_q^{1/2}$.
Hence $(( A_p(\mu^T) + I)^{1/2})' (A_q + I) v = (\ca_q + I) (A_q + I)^{1/2} v$
for all $v \in \dom A_q$.
Using again that $(( A_p(\mu^T) + I)^{1/2})' v =  (A_q + I)^{1/2} v$
one deduces that 
\begin{eqnarray*}
\lefteqn{
(\ca_q + I) \Big( (( A_p(\mu^T) + I)^{1/2})' v \Big)  
} \hspace*{10mm} \\*
& = & (\ca_q + I) (A_q + I)^{1/2} v  \\ 
& = & (( A_p(\mu^T) + I)^{1/2})' (A_q + I) v  \\
& = & (( A_p(\mu^T) + I)^{1/2})' (A_q + I) \Big( (( A_p(\mu^T) + I)^{1/2})' \Big)^{-1}
         \Big( (( A_p(\mu^T) + I)^{1/2})' v \Big)  \\
& = & (B_q + I) \Big( (( A_p(\mu^T) + I)^{1/2})' v \Big) 
\end{eqnarray*}
for all $v \in \dom A_q$.
Hence $B_q = \ca_q$.

`\ref{lnonaut304-3}'.
Let $q \in [2,\infty)$.
It follows from (\ref{elnonaut304;1}) and Statement~\ref{lnonaut304-2} that 
\[
\dom B_q 
= \{ \psi \in W^{-1,q}_\fD : B_2 \psi \in W^{-1,q}_\fD \}
= \{ \psi \in W^{-1,q}_\fD : \ca_2 \psi \in W^{-1,q}_\fD \}
= \widetilde \ca_q
  .  \]
So $B_q = \widetilde \ca_q$.
\end{proof}

In (\ref{elnonaut304;30}) the topological isomorphism
$((A_p(\mu^T) + I)^{1/2})'$ was used to define the $C_0$-semigroup $T^{(q)}$
from the $C_0$-semigroup $S^{(q)}$.
It then transfers properties of the generator of $S^{(q)}$ to 
properties of the generator of $T^{(q)}$.

\begin{thm} \label{tnonaut610}
Adopt Assumption~\ref{assu-general}.
Let $\mu \in \ce$ and $q \in \fI_\mu \cup [2,\infty)$.
Then the operator $B_q +I$ satisfies 
maximal parabolic regularity on the space $W^{-1,q}_\fD$.
\end{thm}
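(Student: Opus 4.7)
The strategy is to transport the known maximal parabolic $L^r(J;L^q)$-regularity of $A_q + I$ to the space $W^{-1,q}_\fD$ via the topological isomorphism $J_p := ((A_p(\mu^T) + I)^{1/2})'$ used in~\eqref{elnonaut304;30} to construct $T^{(q)}$; by construction, this isomorphism intertwines $A_q + I$ and $B_q + I$, so similarity should do all the work.

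As a first step I would secure the $L^q$-input. The operator $-A$ is $m$-sectorial on $L^2$ and, as noted in Definition~\ref{d-2}, generates a holomorphic contraction $C_0$-semigroup $S^{(q)}$ on every $L^q$, $q \in (1,\infty)$. By the classical result of Lamberton for symmetric submarkov semigroups (cf.\ also the autonomous results in \cite{ABHR}), this entails that $A_q + I$ satisfies maximal $L^r(J;L^q)$-regularity for one (equivalently, every) $r \in (1,\infty)$.

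Next I would verify that $J_p \colon L^q \to W^{-1,q}_\fD$ is a topological isomorphism. For $q \in [2,\infty)$ this is immediate from Proposition~\ref{p-3} applied to $\mu^T$ with dual exponent $p = q' \in (1,2]$. For $q \in \fI_\mu$ with $q < 2$, duality gives $q' \in \fI_{\mu^T}$, and the conclusion follows from Theorem~\ref{thmc-001}\ref{thmc-001-1} combined with the closed graph theorem. Then \eqref{elnonaut304;30} reads $T^{(q)}_t = J_p \, S^{(q)}_t \, J_p^{-1}$, so $B_q + I = J_p\,(A_q + I)\,J_p^{-1}$, $\dom(B_q) = J_p\,\dom(A_q)$, and $J_p$ restricts to an isomorphism of the two domains endowed with their graph norms.

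With these ingredients in hand, the proof reduces to a straightforward transport of the solution map. Given $f \in L^r(J; W^{-1,q}_\fD)$, pointwise application of $J_p^{-1}$ yields $J_p^{-1} f \in L^r(J; L^q)$; by the first step there is a unique $v \in \mr^r_0(J; \dom(A_q), L^q)$ solving $v' + (A_q + I)\, v = J_p^{-1} f$; and $u := J_p v$ then lies in $\mr^r_0(J; \dom(B_q), W^{-1,q}_\fD)$ (since the bounded operator $J_p$ commutes with the distributional time derivative), satisfies $u(0) = 0$, and solves $u' + (B_q + I)\, u = f$. Uniqueness transfers by applying $J_p^{-1}$ to any candidate solution. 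The only real content sits in the $L^q$-maximal regularity invoked in Step~1; once it is granted, the rest is purely functorial and requires no delicate estimates.
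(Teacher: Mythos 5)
Your proof is correct and follows essentially the same route as the paper: $A_q + I$ has maximal parabolic regularity on $L^q$ by Lamberton (the semigroup $S^{(q)}$ is contractive and holomorphic on every $L^q$, $q\in(1,\infty)$), and the similarity transform by the Kato square-root isomorphism $J_p = ((A_p(\mu^T)+I)^{1/2})'$ from \eqref{elnonaut304;30} transports this to $B_q + I$ on $W^{-1,q}_\fD$. One small inaccuracy: Lamberton's theorem does not require symmetry of the semigroup (and $S^{(q)}$ is generally not symmetric, since $\mu$ need not be), only consistent contractivity on the $L^q$-scale together with analyticity on $L^2$; this does not affect the validity of your argument.
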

\begin{proof}
The semigroup $S^{(q)}$ is a contraction 
semigroup, hence the operator 
$A_q +I$ has maximal parabolic regularity in the space 
$L^q$ by Lamberton \cite{Lamb}.
Since $((A_p(\mu^T) + I)^{1/2})'$ is a topological isomorphism
from $L^q$ onto $W^{-1,q}_\fD$, where $p = q'$, it follows from 
(\ref{elnonaut304;30}) that the operator $B_q +I$ satisfies 
maximal parabolic regularity on the space $W^{-1,q}_\fD$.
\end{proof}

Note that $B_q = \ca_q$ for all $q \in \fI_\mu$ in the next theorem.
The case $q \in [2,\infty)$ in the next corollary has been proved before in 
\cite[Theorem~11.5]{ABHR}.

\begin{cor} \label{cnonaut611}
Adopt Assumption~\ref{assu-general}.
Then for all $c_\bullet,c^\bullet > 0$ there exists a $\delta \in (0,1)$ such 
that the operator $B_q +I$ satisfies 
maximal parabolic regularity on the
space $W^{-1,q}_\fD$ for all $ q \in (2-\delta, \infty)$
and $\mu \in \ce(c_\bullet,c^\bullet)$.
\end{cor}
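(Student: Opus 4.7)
The plan is to combine Proposition \ref{p-2} with Theorem \ref{tnonaut610} directly; no new machinery is needed. Given $c_\bullet, c^\bullet > 0$, I would first invoke Proposition \ref{p-2} to extract constants $\delta, \varepsilon > 0$, depending only on $c_\bullet, c^\bullet$, such that
\[
(2-\delta, 2+\varepsilon) \subset \fI_\mu \quad \text{for all } \mu \in \ce(c_\bullet, c^\bullet),
\]
shrinking $\delta$ if necessary to ensure $\delta \in (0,1)$. The crucial point is that Proposition \ref{p-2} already supplies this interval \emph{uniformly} in the ellipticity class, so $\delta$ really depends only on $c_\bullet$ and $c^\bullet$.

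With this $\delta$ fixed, I would then verify the conclusion case-by-case. Fix any $q \in (2-\delta, \infty)$ and any $\mu \in \ce(c_\bullet, c^\bullet)$. If $q \ge 2$, then trivially $q \in \fI_\mu \cup [2,\infty)$. If instead $q \in (2-\delta, 2)$, then $q \in \fI_\mu$ by the choice of $\delta$, and again $q \in \fI_\mu \cup [2,\infty)$. In either case Theorem \ref{tnonaut610} applies and gives that $B_q(\mu) + I$ satisfies maximal parabolic regularity on $W^{-1,q}_\fD$.

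I do not expect any substantive obstacle here: the genuine work was done upstream. Proposition \ref{p-2} encodes the uniform Sneiberg-type extrapolation of the isomorphism index around $q=2$, and Theorem \ref{tnonaut610} transfers Lamberton's maximal parabolic regularity for the contraction semigroup $S^{(q)}$ on $L^q$ to the space $W^{-1,q}_\fD$ via the topological isomorphism $((A_p(\mu^T)+I)^{1/2})'$ supplied by the Kato square root result of Proposition \ref{p-3} (together with Theorem \ref{thmc-001}\ref{thmc-001-1} when $q > 2$). The corollary merely packages these two ingredients, the only mild care being that $\delta$ be chosen uniformly in $\mu$ over $\ce(c_\bullet,c^\bullet)$, which is exactly what Proposition \ref{p-2} guarantees.
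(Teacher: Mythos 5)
Your proof is correct and follows exactly the paper's own argument: the paper's proof reads ``This follows immediately from Proposition~\ref{p-2} and Theorem~\ref{tnonaut610},'' and your case split on $q \ge 2$ versus $q \in (2-\delta,2)$ simply makes explicit the way the uniform interval $(2-\delta, 2+\varepsilon) \subset \fI_\mu$ from Proposition~\ref{p-2} feeds the hypothesis $q \in \fI_\mu \cup [2,\infty)$ of Theorem~\ref{tnonaut610}.
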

\begin{proof}
This follows immediately from Proposition~\ref{p-2} and Theorem~\ref{tnonaut610}.
\end{proof}

\begin{rem} \label{r-explain1}
It is clear that there is an asymmetry in the cases $q \in [2,\infty)$ and 
$q \in (1,2]$.
In the first case,
maximal parabolic regularity holds for the operators $\widetilde \ca_q +I$
on $W^{-1,q}_\fD$ by Theorem~\ref{tnonaut610} and Lemma~\ref{lnonaut304}\ref{lnonaut304-3},
even if the domain of this operator is unknown.
On the contrary, in the case  $q<2$,
we can only prove maximal parabolic regularity for the 
operator $\ca_q + I$ in $W^{-1,q}_\fD$ if $q \in \fI_\mu$.
This is a severe restriction 
on $q$, see Examples~\ref{xnonaut317.3}--\ref{xnonaut317.8}. 
It is an open problem whether in Corollary~\ref{cnonaut611} maximal parabolic 
regularity for $\ca_q + I$ is valid on $W^{-1,q}_\fD$ for all $q \in (1,2)$.
\end{rem}

\section{Time dependent coefficients} \label{Snonaut7}

We next consider coefficient functions which also may depend on time.
Let $\mu \colon J \to \ce$ be a function. 
We frequently write $\mu_t = \mu(t)$ for all $t \in J$.
Note that $\mu_t \in L^\infty(\Omega; \Ri^{d \times d}) \subset L^1(\Omega; \Ri^{d \times d})$ for all $t \in J$.
We say that {\bf $\mu$ is $L^1$-measurable} if the map 
$t \mapsto \mu_t$ is measurable as a map from $J$ into $L^1(\Omega; \Ri^{d \times d})$.

In the main result of this section we require  measurability of the coefficient function 
only in the space $L^1(\Omega; \Ri^{d \times d})$. 
This allows that $\mu_t$ is discontinuous in the space variable for each $t \in J$.
Note that the set of point in $\Omega$ where $\mu_t$ is discontinuous may depend on~$t$.
In general the map $t \mapsto \mu_t$ from $J$ into $L^\infty(\Omega; \Ri^{d \times d})$ 
is discontinuous at \emph{every} time point $t$ and therefore it cannot be measurable.
An example is mentioned in the introduction and it will be considered in more detail
in Section~\ref{Snonaut8}.

\begin{lemma} \label{lnonaut321.5}
Adopt Assumption~\ref{assu-general}.
Let $c^\bullet > 0$ and $\mu \colon J \to \bigcup_{c_\bullet > 0} \ce(c_\bullet,c^\bullet)$
be an $L^1$-measurable map.
Let $q,r \in (1,\infty)$.
Then one has the following.
\begin{tabel}
\item  \label{lnonaut321.5-1}
The map $t \mapsto \ca_q(\mu_t) \psi$
is (strongly) measurable from $J$ into $W^{-1,q}_\fD$ for all $\psi \in W^{1,q}_\fD$.
\item  \label{lnonaut321.5-2}
The map $\frac{\partial}{\partial t} + \ca_q(\mu(\cdot)) + I$ is a bounded
linear map from $\mr^r_0(J;W^{1,q}_\fD,W^{-1,q}_\fD)$ into $L^r(J;W^{-1,q}_\fD)$
with norm at most $1 + c^\bullet$.
\end{tabel}
\end{lemma}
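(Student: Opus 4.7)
For part~(i), the plan is to invoke the Pettis measurability theorem: since $W^{-1,q}_\fD$ is separable (as a quotient of $(L^q)^{d+1}$, or equivalently as the dual of the separable reflexive space $W^{1,q'}_\fD$), it suffices to verify weak measurability. Fix $\psi \in W^{1,q}_\fD$ and $\varphi \in W^{1,q'}_\fD$, and observe that
\[
\langle \ca_q(\mu_t)\psi,\varphi\rangle_{W^{-1,q}_\fD \times W^{1,q'}_\fD}
= \int_\Omega \mu_t \nabla\psi \cdot \overline{\nabla\varphi}
= \sum_{i,j=1}^d \int_\Omega (\mu_t)_{ij}\, h_{ij},
\]
where each $h_{ij} := (\partial_j\psi)\,\overline{\partial_i\varphi} \in L^1(\Omega)$ by H\"older's inequality. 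The $L^1$-measurability of $t \mapsto \mu_t$ immediately gives measurability of $t \mapsto \int_\Omega (\mu_t)_{ij}\, g$ for every $g \in L^\infty(\Omega)$, as this amounts to pairing with a continuous linear functional on $L^1(\Omega)$. To upgrade to $g = h_{ij} \in L^1$, I would approximate $h_{ij}$ in $L^1$-norm by the truncations $g_n := h_{ij}\,\mathbf{1}_{\{|h_{ij}| \leq n\}} \in L^\infty$ and use the uniform bound $\|\mu_t\|_{L^\infty} \leq c^\bullet$ to obtain
\[
\bigl| \int_\Omega (\mu_t)_{ij}\,(h_{ij} - g_n) \bigr|
\leq c^\bullet \, \|h_{ij} - g_n\|_{L^1(\Omega)},
\]
which tends to zero uniformly in $t$; measurability thus passes to the limit.

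For part~(ii), the pointwise operator bound $\|\ca_q(\mu_t)\|_{W^{1,q}_\fD \to W^{-1,q}_\fD} \leq \|\mu_t\|_{L^\infty} \leq c^\bullet$ is immediate from H\"older's inequality applied to the defining integral of $\ca_q$, while the natural inclusion $W^{1,q}_\fD \hookrightarrow W^{-1,q}_\fD$ has operator norm at most one (directly from the definition of the dual norm together with H\"older). Given $u \in \mr^r_0(J;W^{1,q}_\fD,W^{-1,q}_\fD)$, Bochner measurability of $u$ into $W^{1,q}_\fD$ combined with part~(i) yields Bochner measurability of $t \mapsto \ca_q(\mu_t)u(t)$ into $W^{-1,q}_\fD$, via the standard approximation of $u$ by simple $W^{1,q}_\fD$-valued functions (each piece $\chi_E \psi$ is sent to $\chi_E(t)\,\ca_q(\mu_t)\psi$, which is measurable by part~(i)). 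The triangle inequality then gives
\[
\|u' + \ca_q(\mu(\cdot))u + u\|_{L^r(J;W^{-1,q}_\fD)}
\leq \|u'\|_{L^r(J;W^{-1,q}_\fD)} + (1+c^\bullet)\,\|u\|_{L^r(J;W^{1,q}_\fD)}
\leq (1+c^\bullet)\,\|u\|_{\mr^r_0},
\]
producing the claimed operator norm bound.

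The main obstacle is conceptual rather than computational: the measurability hypothesis puts $t \mapsto \mu_t$ in $L^1$, whereas the pairing required in part~(i) runs between $L^\infty$ (where $\mu_t$ sits) and $L^1$ (where $\nabla\psi\otimes\overline{\nabla\varphi}$ sits), i.e.\ on the ``wrong side'' of the duality. The uniform $L^\infty$-bound on $\mu_t$ is precisely what bridges this gap through the truncation argument above; its absence would be fatal, since in the intended applications the map $t \mapsto \mu_t$ into $L^\infty$ is typically discontinuous at every point and cannot be measurable.
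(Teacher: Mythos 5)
Your proof is correct and takes essentially the same route as the paper: weak measurability plus the Pettis theorem (using separability of $W^{-1,q}_\fD$), with the uniform $L^\infty$-bound $\|\mu_t\|_{L^\infty}\le c^\bullet$ bridging the duality gap; the only difference is in the density step, where you truncate $h_{ij}=\partial_j\psi\,\overline{\partial_i\varphi}\in L^1$ to land in $L^\infty$, while the paper instead fixes $\psi,\varphi\in C^\infty_\fD$ first (making $\nabla\psi\cdot\overline{\nabla\varphi}$ bounded) and then uses density of $C^\infty_\fD$ in $W^{1,q}_\fD$ and $W^{1,q'}_\fD$ together with the same uniform bound. Both density arguments are equivalent in content, and your write-up of part (ii), which the paper dismisses as ``easy'', is complete and correct.
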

\begin{proof}
Let $\psi, \varphi \in C^\infty_\fD(\Omega)$.
Then the map
\[
\rho \mapsto 
\int_\Omega \rho \, \nabla \psi \cdot  \overline {\nabla \varphi} 
\]
is continuous from $L^1(\Omega,\Ri^{d \times d})$ into $\Ci$.
Since $\mu$ is $L^1$-measurable, also the map
\begin{equation}
t \mapsto 
\langle \ca_q(\mu_t) \psi,\varphi \rangle_{W^{-1,q}_\fD \times W^{1,q}_\fD}
\label{lnonaut321.5;1}
\end{equation}
from $J$ into $\Ci$ is well defined, bounded and measurable.
Since $C^\infty_\fD(\Omega)$ is dense in  $W^{1,q}_\fD$ and $W^{1,q'}_\fD$
and $\mu$ is bounded, 
the map (\ref{lnonaut321.5;1}) is measurable for all 
$\psi \in W^{1,q}_\fD$ and $\varphi \in W^{1,q'}_\fD$.
Therefore
one obtains the weak measurability of the map 
$J \ni t \mapsto \ca_q(\mu_t) \psi \in W^{-1,q}_\fD$
for all $\psi \in W^{1,q}_\fD$, which implies also the strong
measurability, since the space $W^{-1,q}_\fD$ is separable. 
This proves the first statement.
The second one is easy.
\end{proof}

The first main result of this section is as follows.
In order to get good estimates, we use again the 
normed spaces $\mr^r_0(J;W^{1,2}_\fD, W^{-1,2}_\fD)\,\widetilde{\;}$ 
introduced in Definition~\ref{d-tilde}. 

\begin{thm} \label{tnonaut701}
Let $c_\bullet,c^\bullet \in (0,\infty)$ with $c_\bullet \leq 1 \leq c^\bullet$.
Let $s \in (2,\infty)$ and put 
\[
C_{\cj,s} =\max_{r \in \{ s,s' \} }\|\bigl ( \frac{\partial }{\partial t}
 +\cj  \bigr )^{-1}\|_{L^r(J;W^{-1,2}_\fD) \to \mr^r_0(J;W^{1,2}_\fD,W^{-1,2}_\fD)}
,  \]
where $\cj \colon W^{1,2}_\fD \to W^{-1,2}_\fD$ is the duality map.
Define
\[
\kappa_s:= \frac{1}{12} \, \frac{1}{1 +2 ( 1+ \frac{1 + c_\bullet + c^\bullet}{c_\bullet}) \max(1,c^\bullet) C_{\cj,s}} 
\quad \text{and} \quad 
r_0:=\bigl ( \frac{1}{2} -\kappa_s (1-\frac{2}{s})\bigl )^{-1}.
\]
Then for every $L^1$-measurable $\mu \colon J \to \ce(c_\bullet,c^\bullet)$ and 
$r \in (r_0',r_0)$,
the family $ \{ \ca_2(\mu_t) + I \} _{t \in J}$ has maximal $L^r(J;W^{-1,2}_\fD)$-regularity 
and 
\[
\|\Big( \frac{\partial}{\partial t} + \ca_2(\mu(\cdot)) + I \Big)^{-1}
   \|_{L^r(J;W^{-1,2}_\fD) \to \mr^r_0(J;W^{1,2}_\fD, W^{-1,2}_\fD)\,\widetilde{\;}}
\leq  8 \frac{1 + c_\bullet + c^\bullet}{c_\bullet}
,  \]
where the norm on $\mr^r_0(J;W^{1,2}_\fD, W^{-1,2}_\fD)\,\widetilde{\;}$ is defined
using the operator $\cj$.
\end{thm}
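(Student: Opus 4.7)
The plan is to apply Theorem~\ref{thm-e-Hilbert} directly in the Hilbert space setting $V = W^{1,2}_\fD$, $H = L^2$, $V^* = W^{-1,2}_\fD$ with the time-dependent family of sesquilinear forms
\begin{equation*}
\gots_t(\psi,\varphi) := \int_\Omega \mu_t \nabla\psi \cdot \overline{\nabla\varphi} + (\psi,\varphi)_{L^2}, \qquad t \in J .
\end{equation*}
The operator $\cb(t)\colon V \to V^*$ induced by $\gots_t$ is precisely $\ca_2(\mu_t) + I$, so maximal parabolic $L^r(J;V^*)$-regularity of $\{\cb(t)\}_{t\in J}$ in the sense of Theorem~\ref{thm-e-Hilbert} coincides with the conclusion of the present theorem.

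First I would check the hypotheses of Proposition~\ref{p-Lions} (and hence of Theorem~\ref{thm-e-Hilbert}). Measurability of $t \mapsto \gots_t[\psi,\varphi]$ for fixed $\psi,\varphi \in V$ follows verbatim from the proof of Lemma~\ref{lnonaut321.5}\ref{lnonaut321.5-1}: approximate by $C^\infty_\fD$-functions and use that $\rho \mapsto \int_\Omega \rho\,\nabla\psi\cdot\overline{\nabla\varphi}$ is a continuous linear functional on $L^1(\Omega;\Ri^{d\times d})$. Coercivity and boundedness with constants $c_\bullet$ and $c^\bullet$ on $V$ exploit the assumption $c_\bullet \le 1 \le c^\bullet$: for instance $\RRe\gots_t[\psi,\psi] \ge c_\bullet \|\nabla\psi\|_{L^2}^2 + \|\psi\|_{L^2}^2 \ge c_\bullet \|\psi\|_V^2$, and the upper bound is analogous.

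Next I would apply Theorem~\ref{thm-e-Hilbert}\ref{thm-0001-2} with $r_0 = s$ and $r_1 = s'$. Since $1/s + 1/s' = 1$, solving $\tfrac12 = (1-\theta)/s + \theta/s'$ gives $\theta = \tfrac12$, so $\min(\theta,1-\theta) = \tfrac12$, and the constant $C_\cj$ of that theorem is exactly our $C_{\cj,s}$. Because
\begin{equation*}
\frac{1 + c_\bullet + c^\bullet}{c_\bullet} = 1 + \frac{1 + c^\bullet}{c_\bullet} \ge \frac{1 + c^\bullet}{c_\bullet},
\end{equation*}
the parameter $\kappa_s$ is no larger than the quantity $\frac{1}{12}\cdot\frac{1}{1 + 2(1+\frac{1+c^\bullet}{c_\bullet})\max(1,c^\bullet)\,C_\cj}$ appearing in Theorem~\ref{thm-e-Hilbert}\ref{thm-0001-2}, so that theorem applies for every $\tilde\theta$ with $|\tilde\theta - \tfrac12| \le \kappa_s$.

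Finally I would translate the $\tilde\theta$-window into an $r$-window. Writing $\tilde\theta = \tfrac12 + \varepsilon$ and using $1/s' = 1 - 1/s$, one computes $\tfrac{1}{r} = \tfrac12 + \varepsilon(1 - \tfrac{2}{s})$, so the endpoints $\varepsilon = \pm\kappa_s$ correspond to the Hölder-conjugate exponents $r_0 = \bigl(\tfrac12 - \kappa_s(1-\tfrac2s)\bigr)^{-1}$ and $r_0'$, as defined in the statement. Theorem~\ref{thm-e-Hilbert}\ref{thm-0001-2} then delivers, for every $r \in (r_0', r_0)$, that $\frac{\partial}{\partial t} + \ca_2(\mu(\cdot)) + I$ is an isomorphism from $\mr^r_0(J;V,V^*)$ onto $L^r(J;V^*)$ with inverse bounded by $8\tfrac{1+c_\bullet+c^\bullet}{c_\bullet}$ in the norm induced by $\cj$. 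I do not foresee a substantive obstacle; the only delicate point is the bookkeeping needed to line up the two slightly different constants $\frac{1+c^\bullet}{c_\bullet}$ and $\frac{1+c_\bullet+c^\bullet}{c_\bullet}$, which is what forces the mild assumption $c_\bullet \le 1 \le c^\bullet$ and accounts for the factor $\tfrac{1}{12}$ (rather than $\tfrac{1}{6}$) in the definition of $\kappa_s$.
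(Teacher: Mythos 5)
Your proof is correct and follows the same route as the paper: both apply Theorem~\ref{thm-e-Hilbert}\ref{thm-0001-2} with $V = W^{1,2}_\fD$, the augmented forms $\gots_t = \gots_{\mu_t} + (\cdot,\cdot)_{L^2}$, endpoints $r_0 = s$, $r_1 = s'$, $\theta = \tfrac12$, and then convert the $\tilde\theta$-window into the $r$-window via $\tfrac{1}{r} = \tfrac12 + (\tilde\theta - \tfrac12)(1 - \tfrac{2}{s})$. One small imprecision in your closing remark: the factor $\tfrac{1}{12}$ arises from $\tfrac16\cdot\min(\theta,1-\theta) = \tfrac16\cdot\tfrac12$, and $c_\bullet \le 1 \le c^\bullet$ is used solely to make $c_\bullet,c^\bullet$ the coercivity and boundedness constants of $\gots_t$ (as you correctly argue earlier); the larger constant $\tfrac{1+c_\bullet+c^\bullet}{c_\bullet}$ merely shrinks $\kappa_s$ harmlessly, keeping it within the admissible threshold of Theorem~\ref{thm-e-Hilbert}\ref{thm-0001-2}.
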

\begin{proof}
We wish to apply Theorem \ref{thm-e-Hilbert}.
Let $V = W^{1,2}_\fD$. 
Then $V^* = W^{-1,2}_\fD$. 
Note that $\frac{1}{2} = \frac{1-\theta}{s} + \frac{\theta}{s'}$
if $\theta = \frac{1}{2}$. 
Let $\mu \colon J \to \ce(c_\bullet,c^\bullet)$ be 
an $L^1$-measurable map.
For all $t \in J$ define $\gots_t \colon V \times V \to \Ci$ by
\[
\gots_t[\psi,\varphi] 
= \gots_{\mu(t)}[\psi,\varphi] + (\psi,\varphi)_{L^2}
 .  \]
Then $t \mapsto \gots_t[\psi,\varphi]$ is measurable from $J$ into $\Ci$
for all $\psi,\varphi \in V$.
Moreover, $\RRe \mathfrak s_t[\psi,\psi]  \ge c_\bullet\|\psi\|^2_V$ 
and $|\mathfrak s_t[\psi, \varphi]| \le c^\bullet \, \|\psi\|_V \, \|\varphi\|_V$
for all $\varphi,\psi \in V$ and $t \in J$.
If $t \in J$, then $\ca_2(\mu(t)) + I$ is the operator induced by the 
sesquilinear form $\gots_t$.

All the assumptions of Theorem~\ref{thm-e-Hilbert} are satisfied.
If $\tilde \theta \in (\frac{1}{2}-\kappa_s, \frac{1}{2} +\kappa_s)$, then it follows 
from Theorem~\ref{thm-e-Hilbert}\ref{thm-0001-2} that the isomorphism property
is preserved.
Then the assertion follows by using the identity
$\frac{1}{r}=\frac{1-\tilde \theta}{s} +\frac{\tilde \theta}{s'}$.
\end{proof}

The second main result of this section is that non-autonomous maximal 
$L^r(J;W^{-1,q}_\fD)$-regularity extrapolates in both temporal and spatial integrability 
scales, given by $r$ and~$q$.
Again, quantitative estimates as in \eqref{e-kappa} below, are based on the 
constants $C_{\mathcal{K}}^r$ in \eqref{d-C}, corresponding to a suitable 
autonomous reference operator $\mathcal{K}$. 

\begin{thm} \label{t-main}
Suppose Assumption \ref{assu-general} is satisfied. 
Let $c_\bullet,c^\bullet > 0$.
Then there are open intervals $\ci_1,\ci_2 \subset (1,\infty)$ with 
$2 \in \ci_1$ and $2 \in \ci_2$ such that for all $r \in \ci_1$, $q \in \ci_2$ and 
$L^1$-measurable $\mu \colon J \to \ce(c_\bullet,c^\bullet)$
the family  $\{ \ca_q(\mu_t) +I\}_{t \in J}$ has
maximal parabolic $L^r(J;W^{-1,q}_\fD)$-regularity.
So 
\[
\frac{\partial }{\partial t} +\ca_q(\mu(\cdot)) + I
   \colon \mr^r_0(J;W^{1,q}_\fD,W^{-1,q}_\fD)  \to L^r(J;W^{-1,q}_\fD)
\] 
is a topological isomorphism. 
\end{thm}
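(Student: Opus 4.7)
The strategy is to take Theorem~\ref{tnonaut701} as the starting input (non-autonomous maximal $L^r(J;W^{-1,2}_\fD)$-regularity at $q=2$) and extrapolate to nearby $q$ via the abstract machinery of Section~\ref{Snonaut4}.

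First I would fix a reference $\mu_0 \in \ce$ (for instance the identity matrix). By Proposition~\ref{p-2}, $\fI_{\mu_0}$ is an open interval containing $2$, so I may pick $q_0 < 2 < q_1$ both in $\fI_{\mu_0}$; Theorem~\ref{tnonaut610} then ensures that $\cb_0 := \ca(\mu_0)+I$ has maximal parabolic regularity on both $W^{-1,q_0}_\fD$ and $W^{-1,q_1}_\fD$. Using the consistency of the operators $\ca_q(\mu_0)$ in $q$ together with the isomorphism property of $\ca_{q_j}(\mu_0)+I$ at both endpoints $j=0,1$, one checks that $W^{1,q_1}_\fD = \{ x \in W^{1,q_0}_\fD \cap W^{-1,q_1}_\fD : \cb_0 x \in W^{-1,q_1}_\fD\}$, so that $(W^{1,q_1}_\fD,W^{-1,q_1}_\fD)$ and $(W^{1,q_0}_\fD,W^{-1,q_0}_\fD)$ form a pair of common maximal parabolic regularity in the sense of Definition~\ref{d-commonpair}. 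By Lemma~\ref{l-1}, the complex interpolates of these pairs are exactly $W^{1,q(\theta)}_\fD$ and $W^{-1,q(\theta)}_\fD$, with $1/q(\theta) = (1-\theta)/q_1 + \theta/q_0$.

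Next, Theorem~\ref{tnonaut701} provides, for every $L^1$-measurable $\mu : J \to \ce(c_\bullet,c^\bullet)$, non-autonomous maximal $L^r(J;W^{-1,2}_\fD)$-regularity of $\{\ca_2(\mu_t)+I\}_{t\in J}$ for all $r$ in a fixed open interval $I_0 \ni 2$ depending only on $c_\bullet, c^\bullet$, together with a uniform bound on the norm of the corresponding inverse in the $\widetilde{\;}$-framework of Definition~\ref{d-tilde}. I would then apply Theorem~\ref{t-extrapolatabstr} to the non-autonomous family $\{\ca(\mu_t)+I\}_{t \in J}$ along the pair above: for $r \in I_0$, take $r_0^* = r_1^* = r$ and choose $\theta_0$ with $q(\theta_0)=2$; the isomorphism hypothesis \eqref{e-iso;1} at $\theta_0$ is then precisely Theorem~\ref{tnonaut701}, so the conclusion of Theorem~\ref{t-extrapolatabstr} yields maximal $L^r(J;W^{-1,q}_\fD)$-regularity for all $q$ in an open interval around $2$ (corresponding to $\tilde\theta$ in a neighborhood of $\theta_0$).

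The main obstacle is to ensure that the resulting interval of admissible $q$ can be chosen \emph{uniformly} over $r$ and $\mu$, in order to extract common open intervals $\ci_1,\ci_2 \ni 2$. As warned in Remark~\ref{r-Pich}, the general MR-interpolation identity of Lemma~\ref{l-inter} only gives equivalence of norms; nevertheless the constants entering the Sneiberg estimate of Theorem~\ref{t-sneib} come from Lemma~\ref{l-1} and Proposition~\ref{p-interLpp} (both independent of $\mu$ and $r$), from the operator bound $\sup_{t\in J}\|\ca_{q_j}(\mu_t)+I\|_{W^{1,q_j}_\fD \to W^{-1,q_j}_\fD} \le 1+c^\bullet$ supplied by Lemma~\ref{lnonaut321.5}, and from the uniform inverse bound of Theorem~\ref{tnonaut701}. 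Since all three ingredients are uniformly controlled over $\mu \in \ce(c_\bullet,c^\bullet)$ and over $r$ in any compact sub-interval of $I_0$, the Sneiberg parameter $\varepsilon$ is bounded below by a constant independent of $\mu$ and $r$. Taking $\ci_1$ to be the interior of such a compact sub-interval and $\ci_2$ the corresponding uniform $q$-interval completes the argument.
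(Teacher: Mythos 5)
Your outline matches the paper's structure: start from Theorem~\ref{tnonaut701} (the $q=2$, $r$-extrapolation), then move sideways in $q$ via Sneiberg's theorem, identifying $2$ with the midpoint of an interval $[q_0,q_1]$ on which a fixed autonomous reference operator is both an isomorphism and has maximal parabolic regularity. Your verification that $(W^{1,q_1}_\fD,W^{-1,q_1}_\fD)$ and $(W^{1,q_0}_\fD,W^{-1,q_0}_\fD)$ form a pair of common maximal parabolic regularity via $\cb_0=\ca(\mu_0)+I$ is correct. The genuine difference is that you invoke Theorem~\ref{t-extrapolatabstr}/Lemma~\ref{l-inter}, whereas the paper bypasses these and runs Sneiberg by hand in the $\widetilde{\;}$-normed spaces of Definition~\ref{d-tilde}, with $\ck_q=\ca_q(\delta)+I$ as reference.

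The issue is precisely in your final paragraph, where you must make the Sneiberg $\varepsilon$ uniform in $\mu$ and $r$. You assert that ``the constants entering the Sneiberg estimate \ldots come from Lemma~\ref{l-1} and Proposition~\ref{p-interLpp}'', plus the operator bound and the inverse bound of Theorem~\ref{tnonaut701}. This list omits the one that actually carries the $r$-dependence: the norm-equivalence constant in Lemma~\ref{l-inter}'s identity \eqref{e-innt}. In Sneiberg's theorem, $\beta$ is the inverse norm measured between $[Z_1,Z_2]_\theta$ and $[F_1,F_2]_\theta$, while Theorem~\ref{tnonaut701} gives you a bound measured between $L^r(J;W^{-1,2}_\fD)$ and $\mr^r_0(J;W^{1,2}_\fD,W^{-1,2}_\fD)\,\widetilde{\;}$; converting between these requires the Lemma~\ref{l-inter} equivalence, whose constant is built out of the autonomous resolvent bound $C^r_{\cb_0}$ in \eqref{d-C} and so does depend on $r$. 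As written, your argument does not control it, so the uniform lower bound on $\varepsilon$ is not established. This is exactly the obstruction the paper flags in Remark~\ref{r-Pich} and the discussion preceding Definition~\ref{d-tilde}, and is the reason the paper introduces the $\widetilde{\;}$-norms: conjugation by the isometric isomorphism $\frac{\partial}{\partial t}+\ck_q$ turns the $q$-interpolation of the $\mr^r_0(\ldots)\,\widetilde{\;}$-spaces into the purely spatial interpolation of Lemma~\ref{l-1}, whose equivalence constant is independent of both $r$ and $\mu$, and Lemma~\ref{lemma-t-intgeropo} gives equality of norms in the $r$-direction. The gap is repairable in your framework, but you would need to argue explicitly (for instance via Lemma~\ref{lemma-t-intgeropo} applied to $\cb_0$, or some continuity of $r\mapsto C^r_{\cb_0}$) that the Lemma~\ref{l-inter} equivalence constants stay bounded as $r$ ranges over $[r_0',r_0]$; as it stands, that step is missing.
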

\begin{proof}
Define $\delta \colon \Omega \to \Ri^{d \times d}$ by $\delta(x) = I$, the identity matrix,
for all $x \in \Omega$.
Then $\delta \in \ce$ and $A(\delta) = - \Delta$, the minus Laplacian.
Let $\cj \colon W^{1,2}_\fD \to W^{-1,2}_\fD$ be the duality mapping.
Then 
$\langle \cj \psi,\varphi \rangle_{W^{-1,2}_\fD \times W^{1,2}_\fD}
= (\psi,\varphi)_{W^{1,2}_\fD}
= (\psi,\varphi)_{L^2(\Omega)} + \sum_{k=1}^d (\partial_k \psi,\partial_k \varphi)_{L^2(\Omega)}
= \langle (\ca_2(\delta) + I) \psi, \varphi \rangle_{W^{-1,2}_\fD \times W^{1,2}_\fD}$
for all $\psi,\varphi \in W^{1,2}_\fD$.
So $\cj = \ca_2(\delta) + I$.

If follows from Proposition~\ref{p-2}, Lemma~\ref{lnonaut304}\ref{lnonaut304-2} 
and Theorem~\ref{tnonaut610} that there exists a $q_0 \in (2,\infty)$ 
such that $\ca_q(\delta) + I \colon W^{1,q}_\fD \to W^{-1,q}_\fD$ is 
an isomorphism and the operator $\ca_q(\delta) + I$ satisfies 
maximal parabolic regularity on the space $W^{-1,q}_\fD$ for all $q \in (q_0',q_0)$.
By Proposition~\ref{p-2}, there is a $q_1 \in (2,q_0]$ such that 
$[q_1',q_1] \subset \fI_{\tilde \mu}$ for all $\tilde \mu \in \ce(c_\bullet,c^\bullet)$.

For all $q\in[q_1',q_1]$, we choose $\mathcal{K}_q = \ca_q(\delta) + I$ as the 
autonomous reference operator for the spaces 
$\mr^r_0(J;W^{1,q}_\fD,W^{-1,q}_\fD)\,\widetilde{\;}$ in Definition~\ref{d-tilde}.
Note that in case $q = 2$ the reference operator is $\ck_2 = \ca_2(\delta) + I = \cj$, 
which was used in Theorem~\ref{tnonaut701}.
For all $s \in (2,\infty)$ and $\alpha \in (0,1)$ with
$\frac{1}{r} = \frac{1-\alpha}{s} + \frac{\alpha}{s'}$ we obtain by 
Lemma~\ref{lemma-t-intgeropo} that 
\begin{equation} \label{inter-rq}
[ \mr^{s'}_0(J;W^{1,q}_\fD,W^{-1,q}_\fD)\,\widetilde{\;}, 
  \mr^s_0(J;W^{1,q}_\fD,W^{-1,q}_\fD)\,\widetilde{\;}\,]_\alpha
= \mr^r_0(J;W^{1,q}_\fD,W^{-1,q}_\fD)\,\widetilde{\;}
\end{equation}
with equality of norms.
Moreover, for each $q\in[q_1',q_1]$ and $r \in (1,\infty)$ the norms
on $\mr^r_0(J;W^{1,q}_\fD,W^{-1,q}_\fD)$ and $\mr^r_0(J;W^{1,q}_\fD,W^{-1,q}_\fD)\,\widetilde{\;}\;$ 
are equivalent.
Hence it suffices to prove the theorem with $\mr^r_0(J;W^{1,q}_\fD,W^{-1,q}_\fD)$ 
replaced by $\mr^r_0(J;W^{1,q}_\fD,W^{-1,q}_\fD)\,\widetilde{\;}$.

By Theorem~\ref{tnonaut701}, there is an $r_0\in(2,\infty)$ such that 
for all $L^1$-measurable $\mu \colon J \to \ce(c_\bullet,c^\bullet)$ and $r \in [r_0',r_0]$ 
the map
\[
\frac{\partial}{\partial t} + \ca_2(\mu(\cdot)) +I 
\colon  \mr^r_0(J;W^{1,2}_\fD,W^{-1,2}_\fD)\,\widetilde{\;} \, \to  L^r(J;W^{-1,2}_\fD)
\]
is a topological isomorphism with  
\begin{equation}\label{beta-est}
\|\Big( \frac{\partial}{\partial t} + \ca_2(\mu(\cdot)) + I \Big)^{-1}
     \|_{L^r(J;W^{-1,2}_\fD) \to \mr^r_0(J;W^{1,2}_\fD, W^{-1,2}_\fD)\,\widetilde{\;}}
\leq 8 \frac{1 + c_\bullet + c^\bullet}{c_\bullet}.
\end{equation}
These will be the important inverse bounds to apply Theorem~\ref{t-sneib}.

Next, let $q \in \{q_1',q_1\}$.
We need a suitable bound on the operator norms
\[
\gamma_{q,r} := \| \frac{\partial}{\partial t} + \ca_q(\mu(\cdot)) + I 
        \|_{\mr^r_0(J;W^{1,q}_\fD,W^{-1,q}_\fD)\,\widetilde{\;} \, \to L^r(J;W^{-1,q}_\fD)}
,
\]
uniformly in $r \in [r_0', r_0]$.
Since both the spaces $\mr^r_0(J;W^{1,q}_\fD,W^{-1,q}_\fD)\,\widetilde{\;}$ and 
$L^r(J;W^{-1,q}_\fD)$ form exact complex interpolation scales in $r$  
by \eqref{inter-rq} and Proposition~\ref{p-interLpp},
it follows by interpolation that 
\[
\gamma_{q,r} \leq \max_{r \in \{r_0',r_0 \}} \gamma_{q,r}. 
\]
Now let $r \in \{ r_0',r_0 \} $.
Then it follows from Lemma~\ref{lnonaut321.5}\ref{lnonaut321.5-2} and (\ref{ed-tilde;1})
that 
\begin{eqnarray*}
\| (\frac{\partial}{\partial t} + \ca_q(\mu(\cdot)) + I) u\|_{L^r(J;W^{-1,q}_\fD)}
& \leq & (1 + c^\bullet) \, \|u\|_{\mr^r_0(J;W^{1,q}_\fD,W^{-1,q}_\fD)}  \\
& \leq & (1 + c^\bullet) \, C_{\ck_q}^r \, \|u\|_{\mr^r_0(J;W^{1,q}_\fD,W^{-1,q}_\fD)\,\widetilde{\;}}
\end{eqnarray*}
for all $u \in \mr^r_0(J;W^{1,q}_\fD,W^{-1,q}_\fD)$.
So $\gamma_{q,r} \leq (1 + c^\bullet) \, C_{\ck_q}^r$.
Set 
\[
\gamma_0 
:= \max_{q \in \{ q_1',q_1 \} }  \max_{r \in \{ r_0',r_0 \} } (1 + c^\bullet) \, C^r_{\mathcal{K}_q}
 .  \]
Then we proved that 
\begin{equation}
\| \frac{\partial}{\partial t} + \ca_q(\mu(\cdot)) + I 
        \|_{\mr^r_0(J;W^{1,q}_\fD,W^{-1,q}_\fD)\,\widetilde{\;} \, \to L^r(J;W^{-1,q}_\fD)}
\leq \gamma_0
\label{et-main;10}
\end{equation}
for all $r \in [r_0',r_0]$, $q \in \{ q_1',q_1 \} $ and $L^1$-measurable
$\mu \colon J \to \ce(c_\bullet,c^\bullet)$.

Let 
\begin{equation}\label{e-kappa}
\kappa = \frac{1}{12} \, \frac{1}{1 +2 ( 1+8 \frac{1 + c_\bullet + c^\bullet}{c_\bullet})  \gamma_0} 
\quad \text{and} \quad 
q_2 = \bigl( \frac{1}{2} -\kappa (1-\frac{2}{q_1}) \bigl)^{-1}.
\end{equation}
Finally, let  $\mu \colon J \to \ce(c_\bullet,c^\bullet)$
be $L^1$-measurable. 
Let $r \in [r_0',r_0]$ and $q \in [q_1',q_1]$.
Then there exists a $\tilde \theta \in [\frac{1}{2} - \kappa,\frac{1}{2} + \kappa]$
such that $\frac{1}{q} = \frac{1 - \tilde \theta}{q_1} + \frac{\tilde \theta}{q_1'}$.
Note that  $\frac{1}{2} = \frac{1 - \theta}{q_1} + \frac{\theta}{q_1'}$
with $\theta = \frac{1}{2}$.
We apply Theorem \ref{t-sneib} with 
$F_1=\mr^r_0(J;W^{1,q_1'}_\fD,W^{-1,q_1'}_\fD)\,\widetilde{\;}$, 
$F_2=\mr^r_0(J;W^{1,q_1}_\fD,W^{-1,q_1}_\fD)\,\widetilde{\;}$, 
$Z_1 = L^r(J;W^{-1,q_1'}_\fD)$, $Z_2 = L^r(J;W^{-1,q_1}_\fD)$ and $\theta = \frac{1}{2}$.
Note that we have the estimates (\ref{et-main;10}) and (\ref{beta-est}).
Since $|\tilde \theta - \frac{1}{2}| \leq \kappa$ one deduces from 
Theorem~\ref{t-sneib} that 
\[
\frac{\partial }{\partial t} +\ca_q(\mu(\cdot)) + I
   \colon \mr^r_0(J;W^{1,q}_\fD,W^{-1,q}_\fD)\,\widetilde{\;} \, \to L^r(J;W^{-1,q}_\fD)
\] 
is a topological isomorphism. 
This completes the proof of Theorem~\ref{t-main}.
\end{proof}

\section{Quasilinear equations} \label{Snonaut00}

In this section we are interested in quasilinear, non-autonomous equations 
of the form
\[
u'(t) - \nabla \cdot \big( \sigma(u(t)) \mu_t \nabla u(t)\big) + u(t) = f(t); \quad u(0)=0.
\]
The main result is the following. 

\begin{theorem} \label{tquasilin}
Let $\Omega \subset \R^d$ be a bounded open set and $\fD \subset \partial \Omega$
be closed.
Suppose Assumption \ref{assu-general} is satisfied. 
Let $c_\bullet,c^\bullet > 0$ and $\mu \colon J \to \ce(c_\bullet,c^\bullet)$
an $L^1$-measurable map.
Let $\sigma_\bullet,\sigma^\bullet \in (0,\infty)$ with $\sigma_\bullet \leq \sigma^\bullet$.
Let $\sigma \colon \R \mapsto [\sigma_\bullet,\sigma ^\bullet]$ be a
continuous function.
Then there exists an $r_0 \in (2,\infty)$ such that for all $r\in (2,r_0)$
and $f \in L^r(J;W^{-1,2}_\fD)$ there exists a 
$u \in L^r(J;W^{1,2}_\fD) \cap W_0(J;W^{-1,2}_\fD)$ such that 
\begin{equation} \label{etquasilin-1}
u'(t) + \ca_2(\sigma(u(t)) \, \mu_t) u(t) + u(t) = f(t) 
\end{equation} 
in $W^{-1,2}_\fD$ for almost every $t \in J$.
\end{theorem}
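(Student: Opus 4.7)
\textbf{Proof plan for Theorem~\ref{tquasilin}.}

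The plan is to construct $u$ via Schauder's fixed point theorem applied to the map $\Phi$ that sends an auxiliary function $v \in C(\overline J; L^2(\Omega))$ to the solution of the \emph{linear} non-autonomous problem obtained by freezing $\sigma(v)$ in the coefficient. Given such $v$, the matrix field $\tilde\mu_t := \sigma(v(t,\cdot))\,\mu_t$ is $L^1$-measurable into $\ce(\sigma_\bullet c_\bullet,\sigma^\bullet c^\bullet)$, because $\sigma$ is continuous and uniformly bounded. I apply Theorem~\ref{tnonaut701} to this rescaled ellipticity class to obtain an exponent $r_0 \in (2,\infty)$, depending only on $c_\bullet,c^\bullet,\sigma_\bullet,\sigma^\bullet$, such that for every $r \in (2,r_0)$ and every such $v$ the linear problem
\[
w'(t) + \ca_2(\tilde\mu_t)\,w(t) + w(t) = f(t), \qquad w(0) = 0,
\]
has a unique solution $w = \Phi(v) \in \mr^r_0(J;W^{1,2}_\fD,W^{-1,2}_\fD)$, with an a~priori bound $\|\Phi(v)\|_{\mr^r_0(J;W^{1,2}_\fD,W^{-1,2}_\fD)} \le M$ for a constant $M$ independent of $v$.

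Since $W^{1,2}_\fD \hookrightarrow L^2(\Omega) \hookrightarrow W^{-1,2}_\fD$ with the first embedding compact (by Rellich--Kondrachov, using the universal extension operator of Remark~\ref{r-fortsetz}), Proposition~\ref{pnonaut410}\ref{pnonaut410-4} supplies a compact embedding $\mr^r_0(J;W^{1,2}_\fD,W^{-1,2}_\fD)\hookrightarrow C(\overline J;L^2(\Omega))$. Combined with the previous bound, $\Phi$ maps all of $C(\overline J;L^2(\Omega))$ into a fixed relatively compact set. Choosing a closed ball $B_{M'} \subset C(\overline J;L^2(\Omega))$ large enough to contain this image yields a self-map $\Phi\colon B_{M'} \to B_{M'}$ with relatively compact range.

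The main obstacle will be to verify that $\Phi$ is continuous on $B_{M'}$. Given $v_n \to v$ in $C(\overline J; L^2(\Omega))$, the sequence $u_n := \Phi(v_n)$ is bounded in $\mr^r_0(J;W^{1,2}_\fD,W^{-1,2}_\fD)$, so by the compact embedding, along a subsequence $u_n \to u^\ast$ strongly in $C(\overline J;L^2(\Omega))$, weakly in $L^r(J;W^{1,2}_\fD)$, and $u_n' \rightharpoonup (u^\ast)'$ weakly in $L^r(J;W^{-1,2}_\fD)$. Passing to a further subsequence, $v_n \to v$ almost everywhere on $J\times\Omega$, and the continuity and uniform boundedness of $\sigma$ combined with dominated convergence give $\sigma(v_n)\,\mu \to \sigma(v)\,\mu$ in every $L^p(J\times\Omega;\R^{d\times d})$ with $p<\infty$. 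Coupled with the weak convergence $\nabla u_n \rightharpoonup \nabla u^\ast$ in $L^r(J;L^2(\Omega;\C^d))$, this allows passage to the limit in the weak formulation of the equation tested against functions in $L^{r'}(J;W^{1,2}_\fD)$, identifying $u^\ast$ as the unique solution to the linear problem with coefficient $\sigma(v)\mu$, i.e.\ $u^\ast = \Phi(v)$. Uniqueness of the limit then forces convergence of the entire sequence, so $\Phi$ is continuous. Schauder's fixed point theorem then produces a fixed point $u \in B_{M'}$ of $\Phi$, which by construction lies in $\mr^r_0(J;W^{1,2}_\fD,W^{-1,2}_\fD)$ and satisfies \eqref{etquasilin-1} for almost every $t \in J$. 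The delicate ingredient is the passage to the limit: it is the compactness of Proposition~\ref{pnonaut410}\ref{pnonaut410-4}, together with the boundedness of $\sigma$, that converts $L^2$-convergence of $v_n$ into strong convergence of the quasilinear coefficient and thereby reduces the quasilinear problem to a linear one governed by Theorem~\ref{tnonaut701}.
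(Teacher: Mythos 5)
Your proposal is correct and follows essentially the same route as the paper: both use Theorem~\ref{tnonaut701} on the rescaled ellipticity class $\ce(\sigma_\bullet c_\bullet,\sigma^\bullet c^\bullet)$ for the uniform $r_0$ and a~priori bound, Proposition~\ref{pnonaut410}\ref{pnonaut410-4} for compactness of the solution map into $C(\overline J;L^2)$, and Schauder's fixed point theorem for the frozen-coefficient map. The only tactical difference is in verifying continuity of the map: the paper subtracts the two linear equations and shows directly that the residual $\ca_2(\sigma(v_n)\mu_t)u_n - \ca_2(\sigma(v)\mu_t)u_n$ tends to zero weakly in $L^r(J;W^{-1,2}_\fD)$, whence $u-u_n\to 0$ weakly in $\mr^r_0$ by maximal regularity and then strongly in $C(\overline J;L^2)$ by compactness, whereas you pass to the limit in the weak formulation and invoke uniqueness of the linear problem to identify the subsequential limit; both are valid and use the same ingredients.
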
 
\begin{proof}
By Theorem~\ref{tnonaut701} there exist $r_0 \in (2,\infty)$
and $\beta' > 0$ such that for every $L^1$-measurable 
$\mu \colon J \to \ce(\sigma_\bullet \, c_\bullet, \sigma^\bullet \, c^\bullet)$
the family $ \{ \ca_2(\mu(t)) + I \} _{t \in J}$ has maximal 
$L^r(J;W^{-1,2}_\fD)$-regularity and 
\[
\|\Big( \frac{\partial}{\partial t} + \ca_2(\mu(\cdot)) + I \Big)^{-1}
    \|_{L^r(J;W^{-1,2}_\fD) \to \mr^r_0(J;W^{1,2}_\fD, W^{-1,2}_\fD)}
\leq \beta'
\]
for all $r \in [2,r_0]$.

Now let $r \in (2,r_0]$, $f \in L^r(J;W^{-1,2}_\fD)$ and 
$\mu \colon J \to \ce(c_\bullet,c^\bullet)$ be an $L^1$-measurable map.
We wish to define a map $\Psi \colon C(\overline J;L^2) \to C(\overline J;L^2)$.
Let $v \in C(\overline J;L^2)$.
Then $\sigma(v(t,\cdot)) \, \mu_t(\cdot) \in \ce(\sigma_\bullet \, c_\bullet, \sigma^\bullet \, c^\bullet)$
for almost every $t \in J$ and $t \mapsto \sigma(v(t)) \, \mu_t$ is $L^1$-measurable.
Hence there exists a unique $u \in \mr^r_0(J;W^{1,2}_\fD, W^{-1,2}_\fD)$ such that 
\[
u'(t) + \ca_2(\sigma(v(t)) \, \mu_t) u(t) + u(t)  = f(t) 
\]
for almost every $t \in J$.
Then $u \in C(\overline J;L^2)$ by Proposition~\ref{pnonaut410}\ref{pnonaut410-4}.
Define $\Psi(v) = u$.
Then $\Psi(C(\overline J;L^2)) \subset \mr^r_0(J;W^{1,2}_\fD, W^{-1,2}_\fD)$ 
is relatively compact in $C(\overline J;L^2)$ by Proposition~\ref{pnonaut410}\ref{pnonaut410-4}.
We next show that $\Psi$ is continuous.
Then the theorem follows from Schauder's fixed point theorem.

Let $v,v_1,v_2,\ldots \in C(\overline J;L^2)$ and suppose that 
$\lim_{n \to \infty} v_n = v$ in $C(\overline J;L^2)$.
For all $n \in \Ni$ let $u_n = \Psi(v_n)$ and $u = \Psi(v)$.
Then $u_n,u \in \mr^r_0(J;W^{1,2}_\fD, W^{-1,2}_\fD)$,
\[
u_n'(t) + \ca_2(\sigma(v_n(t)) \, \mu_t) u_n(t) + u_n(t)  = f(t) 
\]
and 
\[
u'(t) + \ca_2(\sigma(v(t) \, \mu_t) u(t) + u(t)  = f(t) 
\]
for almost every $t \in J$ and all $n \in \Ni$.
Subtracting gives 
\begin{eqnarray}
\lefteqn{
(u - u_n)'(t) + \ca_2(\sigma(v(t)) \, \mu_t) ( (u - u_n)(t) )
   + (u - u_n)(t)
} \hspace{50mm} \label{etquasilin-2}  \\*
& = & \ca_2(\sigma(v_n(t)) \, \mu_t) u_n(t)
   - \ca_2(\sigma(v(t)) \, \mu_t) u_n(t)
\nonumber
\end{eqnarray}
for almost every $t \in J$ and all $n \in \Ni$.
Since $\lim v_n = v$ in $C(\overline J;L^2)$, also 
$\lim v_n = v$ in $L^2(J;L^2) = L^2(J \times \Omega;\Ci)$.
Hence passing to a subsequence, if necessary, we may assume that 
$\lim_{n \to \infty} v_n(t,x) = v(t,x)$ for almost every $(t,x) \in J \times \Omega$.
For all $n \in \Ni$ define $\tilde u_n \in L^r(J;W^{-1,2}_\fD)$ by 
\[
\tilde u_n(t) 
= \ca_2(\sigma(v_n(t)) \, \mu_t) u_n(t)
   - \ca_2(\sigma(v(t)) \, \mu_t) u_n(t)
 .  \]
We shall show that $\lim \tilde u_n = 0$ weakly in $L^r(J;W^{-1,2}_\fD)$.
Let $w \in L^{r'}(J;W^{1,2}_\fD)$.
Then 
\begin{eqnarray*}
\lefteqn{
| \langle \tilde u_n, w \rangle_{ L^r(J;W^{-1,2}_\fD) \times L^{r'}(J;W^{1,2}_\fD) } |
} \hspace*{5mm} \\*
& = & \bigg| \int_0^T \int_\Omega
   \Big( \sigma(v_n(t,x)) - \sigma(v(t,x)) \Big) 
      \Big( \mu_t(x) \, \nabla u_n(t,x) \Big) \cdot \overline{ \nabla w(t,x) } \, dx \, dt  \\
& \leq & c^\bullet 
   \int_0^T 
   \Big( \int_\Omega \Big| \big( \sigma(v_n(t,x)) - \sigma(v(t,x)) \big) \, \nabla w(t,x) \Big|^2 \, dx \Big)^{1/2}
   \Big( \int_\Omega |\nabla u_n(t)|^2 \Big)^{1/2} \, dt  \\
& \leq & c^\bullet \Big( \int_0^T \Big( \int_\Omega \Big| 
\big( \sigma(v_n(t,x)) - \sigma(v(t,x)) \big) \, \nabla w(t,x) \Big|^2 \, dx \Big)^{r'/2} \Big)^{1/r'}
   \|u_n\|_{L^r(J;W^{1,2}_\fD)}
 .  
\end{eqnarray*}
Obviously 
$\|u_n\|_{L^r(J;W^{1,2}_\fD)} 
\leq \|u_n\|_{\mr^r_0(J;W^{1,2}_\fD, W^{-1,2}_\fD)}
\leq \beta' \, \|f\|_{L^r(J;W^{-1,2}_\fD)}$
for all $n \in \Ni$.
Hence $\lim_{n \to \infty} | \langle \tilde u_n, w \rangle_{ L^r(J;W^{-1,2}_\fD) \times L^{r'}(J;W^{1,2}_\fD) } | = 0$
by the Lebesgue dominated convergence theorem.
So $\lim \tilde u_n = 0$ weakly in $L^r(J;W^{-1,2}_\fD)$.
But 
\[
\Big( \frac{\partial }{\partial t} +\ca_2(\sigma(v(t,\cdot)) \mu_t) + I \Big) (u - u_n)(t)
= \tilde u_n(t)
\]
for almost every $t \in J$ and all $n \in \Ni$ by (\ref{etquasilin-2}).
Also $u - u_n \in \mr^r_0(J;W^{1,2}_\fD, W^{-1,2}_\fD)$ for all $n \in \Ni$.
Hence by maximal parabolic regularity $\lim_{n \to \infty} u - u_n = 0$
weakly in $\mr^r_0(J;W^{1,2}_\fD, W^{-1,2}_\fD)$.
In addition the embedding of $\mr^r_0(J;W^{1,2}_\fD, W^{-1,2}_\fD)$ into 
$C(\overline J;L^2)$ is compact by Proposition~\ref{pnonaut410}\ref{pnonaut410-4}.
So $\lim_{n \to \infty} u - u_n = 0$ in $C(\overline J;L^2)$
and the continuity of $\Psi$ follows.
\end{proof}

\begin{corollary} \label{c-morereg}
If the right hand side $f$ in \eqref{etquasilin-1}  belongs to a space 
$L^r(J;W^{-1,q}_\fD)$
and $r, q>2$ are sufficiently close to $2$, then every solution $u$ provided by the 
theorem belongs to the space
$\mr^r_0(J;W^{1,q}_\fD, W^{-1,q}_\fD)$.
\end{corollary}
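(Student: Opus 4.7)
The plan is to freeze the coefficient along the solution $u$ from Theorem~\ref{tquasilin} and apply the linear extrapolation Theorem~\ref{t-main} in the higher-integrability scale. Concretely, I would set $\tilde\mu(t)(x) := \sigma(u(t,x)) \, \mu_t(x)$. Since $\sigma$ takes values in $[\sigma_\bullet,\sigma^\bullet]$, one checks immediately that $\tilde\mu(t) \in \ce(\sigma_\bullet c_\bullet, \sigma^\bullet c^\bullet)$ for a.e.\ $t \in J$, and by construction $u$ satisfies the \emph{linear} non-autonomous equation
\[
u'(t) + \ca_2(\tilde\mu(t)) u(t) + u(t) = f(t)
\]
in $W^{-1,2}_\fD$ for a.e.\ $t \in J$.

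My next task is to verify that $\tilde\mu$ is $L^1$-measurable in the sense required by Theorem~\ref{t-main}. The solution $u$ produced by Theorem~\ref{tquasilin} lies in $\mr^r_0(J; W^{1,2}_\fD, W^{-1,2}_\fD)$ with $r > 2$, so Proposition~\ref{pnonaut410}\ref{pnonaut410-4} yields $u \in C(\overline J; L^2(\Omega))$. Combined with the continuity and uniform boundedness of $\sigma$, a dominated-convergence argument gives that $t \mapsto \sigma(u(t,\cdot))$ is continuous from $\overline J$ into $L^p(\Omega)$ for every $p < \infty$, and uniformly bounded in $L^\infty$. Together with the $L^1$-measurability and essential $L^\infty$-bound of $\mu$, this produces the desired $L^1$-measurability of $\tilde\mu$ into $L^1(\Omega; \Ri^{d\times d})$.

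With $\tilde\mu$ admissible, Theorem~\ref{t-main} supplies open intervals $\ci_1, \ci_2$ around $2$ such that, for all $r \in \ci_1$ and $q \in \ci_2$, the operator $\frac{\partial}{\partial t} + \ca_q(\tilde\mu(\cdot)) + I$ is a topological isomorphism from $\mr^r_0(J; W^{1,q}_\fD, W^{-1,q}_\fD)$ onto $L^r(J; W^{-1,q}_\fD)$. Shrinking these intervals so that $r$ also lies in the range furnished by Theorem~\ref{tquasilin}, the assumption $f \in L^r(J; W^{-1,q}_\fD)$ then yields a unique $v \in \mr^r_0(J; W^{1,q}_\fD, W^{-1,q}_\fD)$ with
\[
v'(t) + \ca_q(\tilde\mu(t)) v(t) + v(t) = f(t) \qquad \text{for a.e. } t \in J.
\]

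The final step is to identify $v$ with $u$. Since $q > 2$ gives continuous embeddings $W^{1,q}_\fD \hookrightarrow W^{1,2}_\fD$ and, by duality, $W^{-1,q}_\fD \hookrightarrow W^{-1,2}_\fD$, and since $\ca_q(\tilde\mu(t))$ is the restriction of $\ca_2(\tilde\mu(t))$ to $W^{1,q}_\fD$, the function $v$ also belongs to $\mr^r_0(J; W^{1,2}_\fD, W^{-1,2}_\fD)$ and solves the same linear problem there. Uniqueness in $\mr^r_0(J; W^{1,2}_\fD, W^{-1,2}_\fD)$ for $r$ sufficiently close to $2$ is provided by Theorem~\ref{tnonaut701} applied to $\tilde\mu$, so $u = v$ and therefore $u \in \mr^r_0(J; W^{1,q}_\fD, W^{-1,q}_\fD)$. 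The main (modest) obstacle is the $L^1$-measurability verification for $\tilde\mu$; once that is in hand, the corollary follows directly from the linear extrapolation machinery of Theorems~\ref{tnonaut701} and~\ref{t-main}.
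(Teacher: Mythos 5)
Your proposal is correct and follows the same route as the paper: freeze the coefficient at the known solution, obtaining $\tilde\mu(t)=\sigma(u(t))\mu_t \in \ce(\sigma_\bullet c_\bullet, \sigma^\bullet c^\bullet)$, and apply the linear extrapolation Theorem~\ref{t-main}. The paper's proof is a single sentence asserting that $t\mapsto\sigma(u(t))\mu_t$ satisfies the hypotheses of Theorem~\ref{t-main}; you spell out the $L^1$-measurability check (via $u\in C(\overline J;L^2)$, boundedness of $\sigma$, and dominated convergence) and the identification of the $W^{-1,q}$-solution with $u$ via consistency of the $\ca_q$ and uniqueness in the $W^{-1,2}$ scale (Theorem~\ref{tnonaut701}), both of which the paper leaves implicit.
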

\begin{proof}
The coefficient function $t \mapsto \sigma (u(t)) \mu_t$ satisfies the assumptions of
 Theorem \ref{t-main}.
\end{proof}

It is unclear whether \eqref{etquasilin-1} has a unique solution.

\section{Example with non-smooth coefficients in space and time} \label{Snonaut8}

Let $\Omega$ and $\fD$ satisfy Assumption~\ref{assu-general}.
Let $\Omega_0 \subset \Omega$ be an open non-empty set such that 
$\overline{\Omega_0} \subset \Omega$.
Define a reference coefficient function $\mu_0 \colon J \to \Ri^{d \times d}$
by 
\[
\mu_0(x)
= \left\{ \begin{array}{ll}
   1 \, I_d & \mbox{if }  x \in \overline {\Omega_0},  \\[10pt]
   2 \, I_d & \mbox{if }  x \in \Omega \setminus  {\Omega_0} ,
          \end{array} \right.
\]
where $I_d$ denotes the identity matrix in $\R^d$.
For all $t\in J$
let $\Psi_t \colon \Omega \to \Omega$ be a map 
and set $\mu_t = \mu_0 \circ \Psi_t$.
Suppose that $t \mapsto \mu_t$ is $L^1$-measurable.
Then by Theorem~\ref{t-main} the operator family $\{\ca(\mu_t) + I\}_{t \in J}$
satisfies maximal parabolic $L^s(J;W^{-1,q}_\fD)$-regularity for all $s,q \in (1,\infty)$
sufficiently close to $2$. 

More specifically, consider the case in which $\Omega_t = \Psi_t(\Omega_0)$ is an
open subset of $\Omega$ for all $t\in J$
such that $\Omega_{t_1} \neq \Omega_{t_2}$ for all 
$t_1, t_2 \in J$ with $t_1 \neq t_2$.
Then the map $t\mapsto \mu_t$ from $J$ into $L^\infty(\Omega;\Ri^{d \times d})$ is discontinuous at 
every point $t\in J$ and it is straightforward to show that also the map 
$t\mapsto \ca_2(\mu_t) + I$ from $J$ into $\cl(W^{1,2}_\fD ; W^{-1,2}_\fD)$ 
is discontinuous.
We remark that the analysis for this problem is known to be complicated already in 
case of elliptic equations, see 
the discussion in \cite{ElschnerKaiserRehbergSchmidt} for 
relatively simple geometries of interfaces.
Moreover, it represents a challenge also in numerics, see e.g.\ \cite{AdamsLi}.
In many interesting cases, the movement 
of the subdomain $\Omega_t$ is not determined by an `outer' law, but may depend on the 
underlying physical/chemical process itself.
Mathematically,
this leads to a free boundary problem where for example $\Psi_t$ depends on the solution $u$. 
Particular (simple) cases may then be covered by Theorem~\ref{tquasilin}
to obtain existence and regularity of a solution.

\section{Concluding remarks} \label{Snonaut9}

\begin{rem} \label{r-real}
It is possible to carry our results over to real spaces: 
in case of the real space $W^{1,q}_{\fD, \R}$ one identifies its dual with the elements of
$W^{-1,q'}_\fD$ which take real values for real functions from 
$W^{1,q}_{\fD}$.
Then one applies the `complex' result.
This is enabled by
the fact that, in case of real coefficients, the corresponding operators map the real
subspace onto the `real' subspace of the image.
  \end{rem}

\begin{rem} 
We expect that our abstract results in Section~\ref{Snonaut4} have further applications 
in the field of maximal parabolic regularity for non-autonomous parabolic equations. 
For example, 
 one could investigate non-autonomous problems in the $X =L^p(\Omega)$-setting, 
cf.\ \cite[Section~5]{ADLO},
 \cite{Fackler1} and \cite{Fackler2}.
Maximal parabolic regularity for autonomous elliptic, second-order
divergence-form operators $A$ on $L^p(\Omega)$, with $p \in (1,\infty)$, 
can be shown under Assumption~\ref{assu-general}\ref{assu-general:i}.
In this case, however, it is very difficult to determine the exact domain 
$D(t)$ of an operator $A(t)$, if the coefficient function is spatially discontinuous.
The condition 
$D(t) = D(0)$ in our results then generically excludes settings like the one in Section~\ref{Snonaut8}.
 At the same time, recent optimal results in \cite{Fackler1}
on varying domains with $D(t)$ 
 require some continuity in time and regularity in space which also do not cover this setting. 
In particular, we highlight that in \cite{Fackler2} it is shown that the extrapolation 
for Lions' result, Proposition~\ref{p-Lions}, is impossible in an $L^p$-setting. 
\end{rem}

\begin{rem}
The abstract results in Section~\ref{Snonaut4} and applications to non-autonomous forms in
 Section~\ref{Snonaut3} naturally include systems of equations. 
For the more specific setting in 
Sections~\ref{Snonaut5}--\ref{Snonaut7}, one has the required 
elliptic $W^{1,q}$-regularity for systems  (see \cite[Section~6]{HJKR} 
or \cite[Section~7]{BMMM}),
but presently  the corresponding maximal parabolic regularity results
are an open problem.
\end{rem}

\small
\noindent
{\sc K. Disser,
Weierstrass Institute for Applied Analysis and Stochastics,
Mohrenstr.~39, 
10117 Berlin, 
Germany}  \\
{\em E-mail address}\/: {\bf disser@wias-berlin.de}

\mbox{}

\noindent
{\sc A.F.M. ter Elst,
Department of Mathematics,
University of Auckland,
Private bag 92019,
Auckland 1142,
New Zealand}  \\
{\em E-mail address}\/: {\bf terelst@math.auckland.ac.nz}

\mbox{}

\noindent
{\sc J. Rehberg,
Weierstrass Institute for Applied Analysis and Stochastics,
Mohrenstr.~39, 
10117 Berlin, 
Germany}  \\
{\em E-mail address}\/: {\bf rehberg@wias-berlin.de}

\end{document}